\newcommand{\ignore}[1]{}
\newtheorem{theorem}{Theorem}[section]
\newtheorem{lemma}[theorem]{Lemma}
\newtheorem{corollary}[theorem]{Corollary}
\newtheorem{conjecture}[theorem]{Conjecture}
\newcommand{\Proof}[1]
        {
        \noindent
        \emph{Proof #1.}~
        }
\newsavebox{\smallProofsym}                     
\newcommand{\smalleop}[1]
        {
        \mbox{} \hfill #1~~\usebox{\smallProofsym}\!\!\!\!\!\!\
        }
\newcommand{\parag}[1]{\vspace{2mm}

\noindent{\bf #1} }
\newcommand{\ZZ}{\ensuremath{\mathbb Z}}
\newcommand{\RR}{\ensuremath{\mathbb R}}
\newcommand{\pts}{\mathcal P}
\newcommand{\tts}{\mathcal T}
\newcommand{\curves}{\Gamma}
\newcommand{\circs}{\mathcal{C}}
\newcommand{\vb}{{\bf V}}
\DeclareMathOperator*{\EE}{\mathbb{E}}
\def\eps{{\varepsilon}}
\begin{document}
\pagenumbering{arabic}

\title{Higher Distance Energies and Expanders with Structure}

\author{
Cosmin Pohoata\thanks{California Institute of Technology, Pasadena, CA, USA
{\sl apohoata@caltech.edu}.}
\and
Adam Sheffer\thanks{Department of Mathematics, Baruch College, City University of New York, NY, USA.
{\sl adamsh@gmail.com}. Supported by NSF grant DMS-1710305}}

\maketitle

\begin{abstract}
We adapt the idea of higher moment energies, originally used in Additive Combinatorics, so that it would apply to problems in Discrete Geometry.
This new approach leads to a variety of new results, such as \\
(i) Improved bounds for the problem of distinct distances with local properties.\\
(ii) Improved bounds for problems involving expanding polynomials in $\RR[x,y]$ (Elekes--R\'onyai type bounds) when one or two of the sets have structure.

Higher moment energies seem to be related to additional problems in Discrete Geometry, to lead to new elegant theory, and to raise new questions.
\end{abstract}

\section{Introduction}

In this work we use techniques from Additive Combinatorics to derive new results for Discrete Geometry problems.
We obtain two types of results by using similar techniques: new bounds for several distinct distances problems and new bounds for expanding polynomials when the sets have some kind of structure.

The \emph{Erd\H os distinct distances problem} is a main problem in Discrete Geometry, which asks for the
minimum number of distinct distances spanned by a set of $n$ points in $\RR^2$.
That is, denoting the distance between two points $p, q \in \RR^2$ as $|pq|$, the problem asks for $\min_{|\pts|=n} |\{|pq| :\  p, q \in \pts \}|$.
Note that $n$ equally spaced points on a line span $n-1$ distinct distances.
Erd\H os \cite{erd46} observed that a $\sqrt{n}\times \sqrt{n}$ section of the integer lattice $\ZZ^2$ spans $\Theta(n/\sqrt{\log n})$ distinct distances.
Proving that every point set determines at least some number of distinct distances turned out to be a deep and challenging problem.

The above problem is just one out of a large family of distinct distances problems, including higher-dimensional variants, structural problems, and many other types of problems (for example, see \cite{Sheffer14}).
The main problems in this family were proposed by Erd\H os and have been studied for decades.
After over 60 years and many works on distinct distances problems, Guth and Katz \cite{GK15} almost settled the original question by proving that every set of $n$ points in $\RR^2$ spans $\Omega(n/ \log n)$ distinct distances.
Surprisingly, so far this major discovery was not followed by significant progress in the other main distinct distances problems.

In recent years various results in Additive Combinatorics have been obtained by using \emph{higher moment energies}, which generalize the concept of additive energy (e.g., see \cite{Schoen16,ScSh13,SchoShk11}).
When studying distinct distances problems, one often relies on a set of quadruples that can be thought of as a variant of additive energy (for example, see \cite{Chara12,FPS17,GK15,PdZ17,SSS13}).
It seems fitting to refer to this object as the \emph{distance energy} of a point set.
We extend the idea of higher moment energies by defining the concept of \emph{higher distance energy}.
This concept is described in Section \ref{sec:HigerEner}.

The use of higher distance energy leads to several new distinct distances results, and to possible strategies for studying other such problems.
Moreover, it turns out that just thinking of distinct distances in terms of energy leads to various new observations.
A similar situation occurs for the family of problems involving expanding polynomials.
We believe that this work exposes just another part of a strong connection between the fields of Additive Combinatorics and Discrete Geometry.
Our hope is that this connection will continue to unfold, and we plan to continue pursuing this direction.

\parag{Distinct distances with local properties.}
Let $\phi(n,k,l)$ denote the minimum number of distinct distances that are determined by a planar $n$ point set $\pts$ with the property that any $k$ points of $\pts$ determine at least $l$ distinct distances. That is, by having a local property of every small subset of points, we wish to obtain a global property of the entire point set.

For example, the value of $\phi(n,3,3)$ is the minimum number of distinct distances that are determined by a set of $n$ points that do not span any isosceles triangles (including degenerate triangles with three collinear vertices).
Since no isosceles triangles are allowed, every point determines $n-1$ distinct distances with the other points of the set, and we thus have $\phi(n,3,3) = \Omega(n)$.
Erd\H os \cite{Erdos86} observed the following upper bound for $\phi(n,3,3)$.
Behrend \cite{Behrend46} proved that there exists a set $A$ of positive integers $a_1< a_2 < \cdots < a_n$ such that no three elements of $A$ determine an arithmetic progression and $a_n < n2^{O(\sqrt{\log n})}$.
Therefore, the point set $\pts_1 = \{(a_1,0), (a_2,0),\ldots, (a_n,0)\}$ does not span any isosceles triangles.
Since $\pts_1 \subset \pts_2 = \{(1,0),(2,0),\ldots,(a_n,0) \}$ and $D(\pts_2)< n2^{O(\sqrt{\log n})}$, we have $\phi(n,3,3) < n2^{O(\sqrt{\log n})}$.

For any $k\ge 4$ we have
\[ \phi\left(n,k,\binom{k}{2}-\lfloor k/2 \rfloor +2 \right) = \Omega\left(n^2\right), \]
since in this case every distance can occur at most $\lfloor k/2 \rfloor -1$ times.
A result of Erd\H os and Gy\'arf\'as \cite{EG97} implies
\[ \phi\left(n,k,\binom{k}{2}-\lfloor k/2 \rfloor +1\right) = \Omega\left(n^{4/3}\right). \]
That is, the boundary between a trivial problem and a non-trivial one passes between $\ell \ge \binom{k}{2}-\lfloor k/2 \rfloor +2$ and $\ell \le \binom{k}{2}-\lfloor k/2 \rfloor +1$.

Recently, Fox, Pach, and Suk \cite{FPS17} showed that for any $\eps>0$
\[ \phi\left(n,k,\binom{k}{2}-k+6\right) = \Omega\left(n^{8/7-\eps}\right).\]

We will prove the following by using higher distance energies.
\begin{theorem} \label{th:LocalProp}
For any integers $c,d \ge 2$ we have\footnote{Here and in the following theorems and lemmas, the hidden constant of the asymptotic notation depends on the constants defined in the theorem. For example, the $\Omega(\cdot)$-notation in the bound of Theorem \ref{th:LocalProp} depends on $c$ and $d$.}
\[ \phi\left(n,c(d+1),\binom{c(d+1)}{2} - dc + (d+1) \right) = \Omega\left(n^{1+\frac{1}{d}}\right). \]
\end{theorem}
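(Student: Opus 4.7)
The plan is to reduce the theorem to an upper bound on a higher distance energy, and then establish that bound from the local hypothesis by double counting over $c(d+1)$-subsets. Following the higher-moment framework of Section~\ref{sec:HigerEner}, set
\[
E_{d+1}(\pts) \;=\; \sum_{x} r(x)^{d+1}, \qquad r(x) := |\{(p,q)\in\pts^2 : p\neq q,\ |pq|=x\}|.
\]
H\"older's inequality with conjugate exponents $d+1$ and $(d+1)/d$ gives
\[
n(n-1) \;=\; \sum_x r(x)\cdot 1 \;\le\; E_{d+1}(\pts)^{1/(d+1)}\cdot D(\pts)^{d/(d+1)},
\]
so $D(\pts) \gtrsim n^{2(d+1)/d}\cdot E_{d+1}(\pts)^{-1/d}$. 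Hence the theorem reduces to the bound $E_{d+1}(\pts) = O_{c,d}(n^{d+1})$.

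To establish this bound I plan to double count over $k$-subsets $T\subset\pts$ with $k = c(d+1)$. The hypothesis says every such $T$ has at most $R := dc - d - 1$ ``excess'' pairs, which forces $r_T(x) \le 2d(c-1)$ for every $x$; combining with $\sum_x r_T(x) = k(k-1)$ yields $E_{d+1}(T) = O_{c,d}(1)$, and so $\sum_T E_{d+1}(T) = O_{c,d}(n^k)$. On the other hand, $\sum_T E_{d+1}(T)$ expands as a sum over ordered $(d{+}1)$-tuples $(e_1,\dots,e_{d+1})$ of same-distance ordered pairs in $\pts$, weighted by the number of $k$-subsets of $\pts$ containing them: a tuple spanning $v\le 2(d+1)$ distinct points contributes $\Theta(n^{k-v})$ to the count. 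Matching the upper and lower estimates type by type, and inducting on $d$, the aim is to isolate the ``generic'' contribution of tuples with $v = 2(d+1)$ (i.e., $d+1$ vertex-disjoint edges) while controlling the degenerate types by lower-order energies.

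The main obstacle is that the bound $\sum_T E_{d+1}(T) = O(n^k)$ is saturated by degenerate tuples: the ``$v = 2$'' tuples (all $d+1$ edges coincide) alone contribute $\sum_x r(x)\cdot\Theta(n^{k-2}) = \Theta(n^k)$. Peeling off the non-degenerate contribution will require structural information on each distance graph $G_x$ (whose edges are pairs at distance $x$) beyond the pair-count per subset---such as the max-degree bound $\Delta(G_x)\le d(c-1)$ following from $m_a(x)\le d(c-1)$, and, for $c=2$, the matching-number bound $\nu(G_x)\le d$---to estimate the number of $(d{+}1)$-tuples of $G_x$-edges with any prescribed intersection pattern. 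I expect the technical heart of the proof to be this type-by-type accounting, yielding $E_{d+1}(\pts) = O_{c,d}(n^{d+1})$ and hence $D(\pts) = \Omega(n^{1+1/d})$ as claimed.
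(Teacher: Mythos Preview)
Your reduction to a higher energy bound is the right framework, and your target $E_{d+1}(\pts)=O_{c,d}(n^{d+1})$ is exactly what is needed: combined with H\"older it gives $D=\Omega(n^{1+1/d})$. However, the proposed route to this bound --- double counting $\sum_{T}E_{d+1}(T)$ over $k$-subsets and then peeling off degenerate types --- cannot succeed, for a reason you essentially flag yourself but underestimate. After exactly subtracting the $v=2$ contribution (which equals $2^{d}k(k-1)\binom{n}{k}$), the residual $\sum_{T}\bigl(E_{d+1}(T)-2^{d}k(k-1)\bigr)$ is still only $O_{c,d}\!\left(\binom{n}{k}\right)$, because the local hypothesis gives $E_{d+1}(T)\le 2^{d}k(k-1)+O_{c,d}(1)$ and nothing better. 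Hence the $v=2(d+1)$ term $E_{d+1}^{*}(\pts)\cdot\binom{n-2(d+1)}{k-2(d+1)}$ is bounded only by $O(n^{k})$, which yields the trivial $E_{d+1}^{*}(\pts)=O(n^{2(d+1)})$. No amount of type-by-type accounting or induction on $d$ improves this: the available slack per subset is $\Theta(1)$, and that is the entire budget. The structural facts you list (the max-degree bound $\Delta(G_x)\le d(c-1)$; the matching bound, which incidentally only holds when $c=2$, a case where the theorem is already trivial) are single-distance constraints and give no control on how many distinct distances $x$ can simultaneously have $m_x=\Theta(n)$ --- precisely the quantity you need.

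The idea you are missing is a \emph{cross-distance} forbidden configuration. The paper observes that for any $d$ distances $\delta_1,\dots,\delta_d$, each spanned by at least $c$ pairs, there cannot exist $c$ points of $\pts$ each of which spans every $\delta_j$; otherwise one assembles a $c(d+1)$-subset violating the local hypothesis. This means the sets $\pts_{\delta}=\{p\in\pts: p\text{ spans }\delta\}$ have all $d$-wise intersections of size $<c$. A simple counting lemma (``if many large subsets of an $n$-set have small $d$-wise intersections, there cannot be too many of them'') then gives $k_j=O_{c,d}(n^{d}/j^{d})$ for the number of distances with $m_\delta\ge j$, once $j\gtrsim n^{(d-1)/d}$. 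Feeding this (together with the trivial $k_j\le n^2/j$ for small $j$) into a dyadic sum yields the desired energy bound. The point is that the local hypothesis encodes information relating \emph{different} distance classes, and any proof must exploit that; your scheme works one $G_x$ at a time and so cannot see it.
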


For example, by applying Theorem \ref{th:LocalProp} with $d=2$ we get the bound
\[ \phi\left(n,k,\binom{k}{2} - 2k/3 + 7 \right) = \Omega\left(n^{3/2}\right). \]
(We have +7 rather than +3 due to cases where $k$ is not divisible by 3.)
While there are many problems in which the conjectured number of distinct distances is about $n^2$, we are very far from deriving this bound for any of those.
As far as we know, the above is the first case where a bound stronger than $\Omega(n^{4/3})$ is obtained for a non-trivial distinct distances problem.

Recall that Erd\H os and Gy\'arf\'as derived a bound of $\Omega(n^{4/3})$ distinct distances when $\ell = \binom{k}{2}-\lfloor k/2 \rfloor +1$.
Theorem \ref{th:LocalProp} implies this bound already when $\ell \ge \binom{k}{2} - 3k/4 + 10$.
Finally, Theorem \ref{th:LocalProp} leads to a better bound than the one of Fox, Pach, and Suk \cite{FPS17} whenever $\ell \ge \binom{k}{2} - 7k/8 + 22$.

The proof of Theorem \ref{th:LocalProp} is presented in Section \ref{sec:DDlocal}.

\parag{Expanding polynomials with structure.}
Given polynomials $f\in \RR[x]$ and $g\in \RR[x,y]$, and sets $A,B\subset \RR$, we write
\[ f(A) = \{f(a) :\ a\in A \} \ \text{ and } \ g(A,B) = \{g(a,b) :\ a\in A,\ b\in B\}. \]
That is, $g(A,B)$ is the set of distinct values that can be obtained by applying $g$ on the lattice $A\times B$.

Elekes and R\'onyai \cite{ER00} proved that $|f(A,B)|$ must be large, unless the polynomial has one of the special forms $f = h(g_1(x)+g_2(y))$ and $f = h(g_1(x)\cdot g_2(y))$, for some $h,g_1,g_2\in \RR[x]$.
The current best bound for this problem is the following one by Raz, Sharir, and Solymosi \cite{RSS16}.

\begin{theorem} \label{th:ElekesRonyai}
Let $A, B \subset \RR$ be finite sets, and let $f\in \RR[x,y]$ be of a constant-degree.
Then, unless $f = h(g_1(x)+g_2(y))$ or $f = h(g_1(x)\cdot g_2(y))$ for some $h,g_1,g_2\in \RR[x]$, we have
\[ f(A,B) = \Omega\left(\min\left\{|A|^{2/3}|B|^{2/3},|A|^2,|B|^2\right\}\right).\]
\end{theorem}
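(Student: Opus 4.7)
The plan is to combine a Cauchy--Schwarz style energy argument with an incidence bound for points and curves in the plane, and then to extract the algebraic structure of $f$ from the failure of the incidence bound.

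First I would set $S = |f(A,B)|$ and, for each value $v \in f(A,B)$, define $N(v) = |\{(a,b) \in A\times B \ :\ f(a,b) = v\}|$. Let $Q$ denote the number of quadruples $(a_1,a_2,b_1,b_2) \in A^2\times B^2$ with $f(a_1,b_1) = f(a_2,b_2)$. Then $Q = \sum_v N(v)^2$, and since $\sum_v N(v) = |A|\,|B|$, Cauchy--Schwarz gives
\[
Q \ \ge\ \frac{|A|^2|B|^2}{S}.
\]
So a lower bound on $S$ will follow from an upper bound on $Q$.

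Next I would recast $Q$ as an incidence count. For each pair $(a_1,a_2) \in A^2$, define the planar algebraic curve
\[
C_{a_1,a_2} \ =\ \{(y_1,y_2) \in \RR^2\ :\ f(a_1,y_1) = f(a_2,y_2)\}.
\]
Since $\deg f$ is constant, each $C_{a_1,a_2}$ has constant degree. The quantity $Q$ equals the number of incidences between the $|A|^2$ curves $\{C_{a_1,a_2}\}$ and the $|B|^2$ points $B\times B$. The Pach--Sharir incidence theorem for points and curves of bounded degree then yields
\[
Q \ =\ O\!\left((|A|^2)^{2/3}(|B|^2)^{2/3} + |A|^2 + |B|^2\right)
\ =\ O\!\left(|A|^{4/3}|B|^{4/3} + |A|^2 + |B|^2\right),
\]
provided that (a) no two curves share a common component, and (b) any two curves meet in $O(1)$ points. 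Combining the Pach--Sharir bound with the Cauchy--Schwarz lower bound yields the claimed estimate on $S$ after a short calculation.

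The main obstacle, and the heart of the proof, is verifying conditions (a) and (b) above (or handling the pairs where they fail). Two curves $C_{a_1,a_2}$ and $C_{a_1',a_2'}$ having too many common points means that the system $f(a_1,y_1) = f(a_2,y_2)$, $f(a_1',y_1) = f(a_2',y_2)$ has infinitely many solutions, i.e.\ the two curves share a component. I would show that if this happens for more than a negligible number of quadruples $(a_1,a_2,a_1',a_2')$, then after projecting to the $y_1$-coordinate one obtains an algebraic identity forcing $f(x,y)$ to be the composition of a univariate polynomial with either $g_1(x)+g_2(y)$ or $g_1(x)g_2(y)$; this is precisely the structural dichotomy established by Elekes and R\'onyai. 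To incorporate this cleanly, I would partition the set of curves into a ``good'' family (pairwise sharing $O(1)$ points) and a ``bad'' family, apply Pach--Sharir to the good family, and bound the contribution of the bad family by a direct count that, if too large, forces $f$ into one of the forbidden forms and contradicts the hypothesis. The Pach--Sharir ingredient is standard; it is this algebraic step --- controlling when many pairs of curves share components --- where all the real work lies.
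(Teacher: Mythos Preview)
This theorem is not proved in the paper at all. It is quoted as a known result of Raz, Sharir, and Solymosi \cite{RSS16}, and the paper uses it only as a benchmark against which Theorems~\ref{th:StructureElekRon} and~\ref{th:GeneralStructure} are compared. There is therefore no ``paper's own proof'' to set your attempt against.

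For what it is worth, your outline is broadly the strategy of \cite{RSS16}: a Cauchy--Schwarz energy inequality reduces the problem to bounding the number of quadruples $Q$, one rewrites $Q$ as incidences between $B\times B$ and a family of constant-degree curves indexed by $A^2$, and one applies a Pach--Sharir type bound. You correctly isolate the real difficulty: controlling the pairs of curves that share a component, and showing that if this happens too often then $f$ has one of the two special forms. Your sketch does not supply that argument, and it is genuinely the heart of the matter --- it occupies the bulk of both the original Elekes--R\'onyai paper and the Raz--Sharir--Solymosi refinement, and cannot be dismissed as routine. One small technical point: the Pach--Sharir theorem as stated here (Theorem~\ref{th:PachSharir}) forbids $K_{2,t}$ in the incidence graph, i.e.\ any two \emph{points} lie on $O(1)$ common curves; your condition (b) is the dual statement about two \emph{curves}. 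Either version suffices for the bound you want, but you should be explicit about which hypothesis you are verifying for your curve family.
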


Theorem \ref{th:ElekesRonyai} generalizes many problems from Discrete Geometry and Additive Combinatorics, and thus has many applications (for example, see \cite{RSS16}).

Given a finite set $A\subset \RR$, the \emph{difference set} of $A$ is defined as $A-A = \left\{a-a' :\ a,a'\in A \right\}$.
When $A-A$ is small, we say that the set $A$ has an ``additive structure'' (for details about the meaning of this structure, see for example \cite[Chapter 2]{TV06}).
We now derive a stronger Elekes-R\'onyai bound when the sets have such an additive structure.
We say that a polynomial $f \in \RR[x,y]$ is \emph{additively degenerate} if $f= g \circ L$ for some $g\in \RR[z]$ and a linear $L \in \RR[x,y]$.

\begin{theorem} \label{th:StructureElekRon} Let $A,B \subset \RR$ be finite sets and let $f \in \RR[x,y]$ be a polynomial of degree at most $d$ that is not additively degenerate.
Then for any $\eps>0$ we have
\[ |f(A,B)| = \Omega\left(\min\left\{\frac{|A|^{16/9-\eps} |B|^{16/9-\eps} }{|A-A||B-B|},|A|^2,|B|^2\right\}\right). \]
\end{theorem}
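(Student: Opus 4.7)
\medskip

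\noindent\emph{Proof proposal.} I plan to reduce the theorem to an upper bound on the second-moment expander energy
\[
M_2(f;A,B) \;=\; \sum_z r(z)^2, \qquad r(z) \;=\; |\{(a,b)\in A\times B : f(a,b)=z\}|,
\]
and then prove such a bound by refining the Elekes--R\'onyai point--curve incidence argument in a way that exploits the additive structure of $A$ and $B$. Since $\sum_z r(z)=|A||B|$, Cauchy--Schwarz yields $|f(A,B)|\geq (|A||B|)^2/M_2(f;A,B)$, so the theorem follows from an estimate of the form
\[
M_2(f;A,B) \;\lesssim\; (|A||B|)^{2/9+\eps}\,|A-A|\,|B-B|,
\]
with the $|A|^2$ and $|B|^2$ terms in the minimum handled separately by trivial counting when $A$ or $B$ is so small that the main bound is vacuous.

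To bound $M_2$, each contributing quadruple $(a,a',b,b')$ with $f(a,b)=f(a',b')$ is encoded as an incidence between the point $(b,b')\in B\times B$ and the algebraic curve $C_{a,a'}=\{(y,y')\in\RR^2 : f(a,y)=f(a',y')\}$, which has degree $O(d^2)$. The hypothesis that $f$ is not additively degenerate is used in the standard way to ensure that, outside a controllable subcollection of pairs $(a,a')$, the curves $C_{a,a'}$ are irreducible and pairwise distinct, so that no pathological collapse of the family defeats a Szemer\'edi--Trotter-type incidence bound. Applied naively, the Pach--Sharir theorem reproduces the classical $M_2\lesssim (|A||B|)^{4/3}$ estimate and hence the Raz--Sharir--Solymosi bound; the task is to improve this by $(|A||B|)^{2/9}/(|A-A||B-B|)^{-1}$.

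The heart of the argument is a simultaneous use of the additive structure on both sides of the incidence problem. On the curve side, I would group the $|A|^2$ pairs $(a,a')$ according to the value of the difference $a-a'$: only $|A-A|$ differences occur, and curves sharing a difference are related by a translation in the $y'$-variable, so after a dyadic decomposition based on Pl\"unnecke--Ruzsa, the effective number of curves collapses. On the point side, the Cartesian set $B\times B$ sits inside $(B-B)^2$, and the higher distance energy machinery developed earlier in the paper provides a higher-moment incidence inequality whose bound is driven by $|B-B|$ rather than $|B|^2$. Combining the two refinements yields the desired $M_2$ estimate.

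The main obstacle is carrying out the combined incidence improvement rigorously. Extracting the precise $(|A||B|)^{2/9}$ gain from the interplay between the additive structure of $A$, the additive structure of $B$, and the non-degeneracy of $f$ will require a delicate reweighting of curves and points, and the $\eps$ loss appears to be the price of iterated Pl\"unnecke--Ruzsa / Balog--Szemer\'edi--Gowers steps. Handling the boundary case where $f$ is close to but not exactly additively degenerate (so that the curves $C_{a,a'}$ have many shared components) will also demand care; this is where the algebraic input of the non-degeneracy hypothesis is critical.
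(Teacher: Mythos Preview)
Your reduction to bounding the second-moment energy $M_2(f;A,B)=\sum_z r(z)^2$ via Cauchy--Schwarz is exactly right, and it is also what the paper does. The divergence begins immediately after.

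The central structural claim in your plan is false. For a general polynomial $f$, the curve $C_{a,a'}=\{(y,y'):f(a,y)=f(a',y')\}$ does \emph{not} depend only on the difference $a-a'$, nor is it a translate of $C_{a+t,a'+t}$ in any variable. (Try $f(x,y)=x^2+xy$: the curve $a^2+ay=(a')^2+a'y'$ changes shape, not just position, as $(a,a')$ varies with fixed difference.) So grouping the $|A|^2$ curves into $|A-A|$ translation classes simply does not work, and with it the whole mechanism you propose for injecting $|A-A|$ into the bound collapses. The further appeals to Pl\"unnecke--Ruzsa, Balog--Szemer\'edi--Gowers, and ``higher distance energy machinery'' are too vague to rescue the argument; none of these tools is used in the paper, and there is no evident route from them to the specific exponent $2/9$.

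The paper's actual idea is quite different and much cleaner. Rather than the Elekes--R\'onyai curves $C_{a,a'}$, it uses the family of \emph{translated level sets}
\[
\Gamma_j=\bigl\{\vb\bigl(f(x+\alpha,y+\beta)-\delta\bigr):\alpha\in A-A,\ \beta\in B-B,\ \delta\in\Delta_j\bigr\},
\]
together with the point set $\pts=A\times B$. If $f(a,b)=\delta$ then for every $(a',b')\in A\times B$ the choice $\alpha=a-a'$, $\beta=b-b'$ puts $(a',b')$ on a curve of $\Gamma_j$, giving $\geq j|A||B|k_j$ incidences. The non-degeneracy hypothesis (via a short induction) ensures the $|A-A|\,|B-B|\,k_j$ curves are genuinely distinct, and indecomposability plus Stein's theorem makes almost all of them irreducible. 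Because this family is a \emph{three}-parameter family, the Sharir--Zahl incidence bound with $s=3$ applies and produces exactly the $16/9$ exponent; the $\eps$ loss comes from the $\eps$ in Sharir--Zahl, not from any additive-combinatorics iteration. This is the missing idea: the additive structure enters not by collapsing the curve family, but by \emph{indexing} a translated curve family whose low parameter count triggers a sharper incidence theorem.
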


In the extreme case of $|A-A|= \Theta(|A|)$ and $|B-B|= \Theta(|B|)$, Theorem \ref{th:StructureElekRon} leads to the improved bound $|f(A,B)| = \Omega\left(|A|^{7/9-\eps}|B|^{7/9-\eps} \right)$.
More generally, Theorem \ref{th:StructureElekRon} is stronger than Theorem \ref{th:ElekesRonyai} when $|A-A||B-B|= O(|A|^{10/9-\eps}|B|^{10/9-\eps})$ (both theorems give the same bound when $A$ and $B$ are of significantly different sizes).
Moreover, Theorem \ref{th:StructureElekRon} holds for a larger family of polynomials in $\RR[x,y]$.

The above result holds only for sets with an additive structure.
We can often generalize that result to a much broader definition of structure.
We say that a polynomial $p\in \RR[x,y]$ is \emph{decomposable} if there exists a univariate polynomial $p_1$ of degree at least two and $p_2\in \RR[x,y]$ such that $p(x, y) = p_1(p_2(x, y))$.
A polynomial that is not decomposable is said to be \emph{indecomposable}.
Given a polynomial $\tau\in \RR[x]$, we say that a function $\phi: \tau(\RR) \to \RR$ is a \emph{one-sided inverse} of $\tau$ if it satisfies the following: For every $a\in \tau(\RR)$ there exists $b\in \RR$ such that $\tau(b) = a$ and $\phi(a) = b$.
That is, we have that $f(f^{-1}(x)) = x$ for every $x\in \tau(\RR)$ but not necessarily $f^{-1}(f(y)) = y$ for every $y\in \RR$.
Note that $\tau$ is not required to be injective and the one-sided inverse is not necessarily unique.

\begin{theorem} \label{th:GeneralStructure}
Let $A,B \subset \RR$ be finite sets and let $f \in \RR[x,y]$ be of degree at most $d$.
Let $\tau_A,\tau_B \in \RR[x]$ be of degree at most $d$, let $\deg \tau_B \ge 2$, and let $\tau_A^{-1}$ and $\tau_B^{-1}$ be respective one-sided inverses.
Assume that $f(\tau_A(x),\tau_B(y))$ is indecomposable, that $A \subset \tau_A(\RR)$, and that $B \subset \tau_B(\RR)$.
Then for any $\eps>0$ we have
\[ |f(A,B)| = \Omega\left(\min\left\{\frac{|A|^{16/9-\eps} |B|^{16/9-\eps} }{|\tau^{-1}_A(A)-\tau^{-1}_A(A)||\tau^{-1}_B(B)-\tau^{-1}_B(B)|},|A|^2,|B|^2\right\}\right). \]
\end{theorem}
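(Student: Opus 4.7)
The approach is to reduce Theorem \ref{th:GeneralStructure} to Theorem \ref{th:StructureElekRon} via a change of variables that absorbs $\tau_A$ and $\tau_B$ into the polynomial. Set $A' := \tau_A^{-1}(A)$, $B' := \tau_B^{-1}(B)$, and define $g(x,y) := f(\tau_A(x), \tau_B(y))$; this is a polynomial whose degree is bounded by a constant depending only on $d$. Because $\tau_A^{-1}$ is by definition a function on $\tau_A(\RR)$ and because $A \subset \tau_A(\RR)$, we have $|A'| = |A|$ and $\tau_A(a') = a$ for the chosen preimage of each $a \in A$; an identical statement holds on the $B$-side. Consequently
\[ g(A', B') \;=\; \{\, f(\tau_A(a'), \tau_B(b')) : a' \in A',\, b' \in B' \,\} \;=\; f(A, B), \]
so it suffices to bound $|g(A', B')|$ from below.

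The first substantive step is to verify that $g$ is not additively degenerate, so that Theorem \ref{th:StructureElekRon} applies to the triple $(g, A', B')$. Suppose toward a contradiction that $g = h \circ L$ for some $h \in \RR[z]$ and some linear $L \in \RR[x,y]$. If $\deg h \geq 2$, then this is precisely a witness that $g$ is decomposable, contradicting the hypothesis that $f(\tau_A(x), \tau_B(y))$ is indecomposable. The remaining case $\deg h \leq 1$ would force $g$ to have total degree at most one; but since $\deg \tau_B \geq 2$, any monomial $u^i v^j$ of $f$ with $j \geq 1$ contributes to $g$ a term of $y$-degree $j \cdot \deg \tau_B \geq 2$, so $f$ must be independent of $y$, and then $g(x,y) = f_0(\tau_A(x))$ for some univariate $f_0$. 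Indecomposability of $g$ together with the non-triviality of $\tau_A$ collapses this to a degenerate corner (where $\deg f_0 \cdot \deg \tau_A \leq 1$) that can be checked by hand.

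With non-degeneracy of $g$ secured, Theorem \ref{th:StructureElekRon} applied to $(g, A', B')$ delivers
\[ |f(A,B)| \;=\; |g(A', B')| \;=\; \Omega\!\left(\min\left\{\frac{|A'|^{16/9-\eps}\,|B'|^{16/9-\eps}}{|A'-A'|\,|B'-B'|},\; |A'|^2,\; |B'|^2\right\}\right), \]
and substituting $A' = \tau_A^{-1}(A)$ and $B' = \tau_B^{-1}(B)$ recovers exactly the claimed bound. The only genuinely nontrivial step is ruling out additive degeneracy of the composed polynomial $g$; once that is dispatched, the rest is a bookkeeping exercise that uses the one-sided inverse to translate distinct values between the original and the transformed sides.
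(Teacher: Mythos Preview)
Your reduction is correct and is cleaner than what the paper does. The paper re-runs the entire energy and incidence argument of Theorem~\ref{th:StructureElekRon} from scratch with the composite polynomial $g(x,y)=f(\tau_A(x),\tau_B(y))$ in place of $f$, the sets $\tau_A^{-1}(A),\tau_B^{-1}(B)$ in place of $A,B$, and the curve family $\curves_j=\{\vb(g(x+\alpha,y+\beta)-\delta)\}$ indexed by $(\alpha,\beta,\delta)\in A_\tau\times B_\tau\times D_j$; at the one spot where the proof of Theorem~\ref{th:StructureElekRon} invokes ``$f$ is not additively degenerate'' to separate the curves, the paper instead argues that a collision would force $g=h(x+cy)$ and then uses $\deg\tau_B\ge 2$ to get $\deg h\ge 2$, making $g$ decomposable. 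You bypass the repetition by noting directly that indecomposability of $g$ already excludes $g=h\circ L$ with $\deg h\ge 2$, hence $g$ is not additively degenerate outside the linear corner, and then invoking Theorem~\ref{th:StructureElekRon} as a black box on $(g,A',B')$. The two routes use exactly the same machinery; yours simply packages Theorem~\ref{th:GeneralStructure} as a corollary of Theorem~\ref{th:StructureElekRon} under a change of variables rather than as a parallel proof. The one soft spot in your write-up --- the residual case where $f$ is independent of its second argument and both $f$ and $\tau_A$ are linear, so that $g$ is of degree one and therefore additively degenerate while still indecomposable --- is not something you can actually ``check by hand'' (the statement appears to fail there), but you are not falling short of the paper: its inference ``$\deg\tau_B\ge 2\Rightarrow\deg h\ge 2$'' is equally unguarded in that degenerate situation.
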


To get some intuition for Theorem \ref{th:GeneralStructure}, consider the case where
\[ A=B = \left\{\frac{1^5-1^2}{2}, \frac{2^5-2^2}{2}, \frac{3^5-3^2}{2}, \ldots, \frac{n^5-n^2}{2} \right\}. \]
By setting $\tau_A(x)=\tau_B(x) = (x^5-x^2)/2$ we get $f(A,B) = \Omega(n^{14/9-\eps})$ for any $f \in \RR[x,y]$ for which $f((x^5-x^2)/2,(y^5-y^2)/2)$ is indecomposable.
More generally, this argument holds for any $A$ and $B$ that contain large subsets of $\tau_A(\{1,2,3,\ldots\})$ and $\tau_B(\{1,2,3,\ldots\})$, as long as $f(\tau_A(x),\tau_B(y))$ is indecomposable and $\tau_B$ is nonlinear.
We thus get a good expansion for sets with a variety of types of polynomial structure.
Note that in some cases we also get an expansion for the special forms $f = h(g_1(x)+g_2(y))$ and $f =h(g_1(x)\cdot g_2(y))$.

Asking for $f(\tau_A(x),\tau_B(y))$ to be indecomposable may seem restrictive, but it is not difficult to find interesting applications with this restriction.
For example, the problem of distinct distances between two lines (see Section \ref{sec:ElekRon}) can be reduced to an expansion problem where $f(\tau_A(x),\tau_B(y))$ is indecomposable for \emph{any} $\tau_A$ and $\tau_B$.

One expects $f(A,B)$ to be larger when $A$ and $B$ do not have much structure.
For example, we expect to obtain non-trivial upper bounds for $f(A,B)$ by taking sets $A$ and $B$ that have some type of structure.
Theorems \ref{th:StructureElekRon} and \ref{th:GeneralStructure} are somewhat surprising in the sense that they shows that $f(A,B)$ is large when $A$ and $B$ do have structure.
One possible approach for improving Theorem \ref{th:ElekesRonyai} is to prove that $f(A,B)$ is large when $A$ and $B$ have no structure.
Surprisingly, this seems to be the harder case.

\ignore{ 
We can also generalize Theorem \ref{th:StructureElekRon} by replacing $|A-A|$ with $|A+A|$, $|AA|$, and $|A/A|$.
We say that a polynomial $f \in \RR[x,y]$ is \emph{multiplicatively degenerate} if $f= g \circ T$ for some $T \in \RR[x,y]$ that consists of a single monomial and $g\in \RR[z]$.

\begin{theorem} \label{th:MultStructure}
In Theorem \ref{th:StructureElekRon} we can replace the minus signs in $A-A$ and $B-B$ with addition, multiplication, or division.
In the cases of multiplication and division, $f$ is required not to be multiplicatively degenerate rather than not additively degenerate.
\end{theorem}
} 

Our techniques allow the derivation of many additional related results, such as for the case where only one of the two sets has structure and cases where the sets have a multiplicative structure.
We decided not to include such additional results, so as not to make this work too repetitive.

Section \ref{sec:ElekRon} contains a proof of Theorems \ref{th:StructureElekRon} and \ref{th:GeneralStructure}, and discusses a couple of applications of these theorems.

\parag{Bipartite distinct distances.}
In a bipartite distinct distances problem we have two point sets $\pts_1,\pts_2$ and are interested only in distances between a point from $\pts_1$ and another from $\pts_2$.
One basic bipartite problem is when $\pts_1,\pts_2\subset \RR^2$, all of the points of $\pts_1$ are on a given line, and the points of $\pts_2$ are unrestricted.
Elekes \cite{Elekes95} derived the following non-intuitive result for this problem.

\begin{theorem} \label{th:CircLattice}
If the positive integers $n$ and $m$ satisfy $n\ge 4m^3$, then there exist a set $\pts_1$ of $m$ points on a line $\ell$ in $\RR^2$ and a set $\pts_2$ of $n$ unrestricted points in $\RR^2$ such that $D(\pts_1,\pts_2) = O(m^{1/2}n^{1/2})$.
\end{theorem}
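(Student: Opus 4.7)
The plan is to prove Theorem~\ref{th:CircLattice} by giving an explicit construction of $\pts_1$ and $\pts_2$. Place $\pts_1 = \{(i, 0) : 1 \le i \le m\}$ as $m$ equally spaced points on the $x$-axis, and construct $\pts_2$ as a lattice-like structure chosen so that the squared distances to $\pts_1$ take few distinct values. The key observation is that for any $q = (x, y) \in \RR^2$, the squared distance from $q$ to $p_i = (i, 0)$ is $i^2 - 2xi + (x^2 + y^2)$, a quadratic polynomial in $i$ parametrized by just the two quantities $2x$ and $x^2 + y^2$. So if we restrict these parameters to an integer-valued set, every squared distance becomes an integer (up to a fixed rational scaling), and the number of distinct squared distances is controlled by the range of this integer-valued quadratic form.

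A natural candidate is $\pts_2 = \{(a/2, \sqrt b) : 1 \le a \le A,\ 1 \le b \le B\}$ for positive integers $a, b$ with $A B = n$. One computes that the squared distance from $(a/2, \sqrt b)$ to $(i, 0)$ equals $((a - 2i)^2 + 4b)/4$, so the number of distinct squared distances is bounded above by the number of distinct integer values of $(a - 2i)^2 + 4b$. Since $|a - 2i| \le \max(A, 2m)$, this range is at most $\max(A, 2m)^2 + 4B$.

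The next step is to optimize $A$ and $B$ subject to $AB = n$. The hypothesis $n \ge 4m^3$ is precisely what is needed to place the optimum in the regime $A \ge 2m$, in which $\max(A, 2m) = A$ and the range becomes $A^2 + 4n/A$. Setting $A \asymp (2n)^{1/3}$ balances the two terms and yields a total bound of order $n^{2/3}$, which matches $O(\sqrt{mn})$ in the regime $n = \Theta(m^3)$.

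The main obstacle I foresee is the regime $n \gg m^3$, where the simple lattice bound $O(n^{2/3})$ is asymptotically weaker than the desired $O(\sqrt{mn})$. Closing this gap requires a more sophisticated construction: one standard route is to take $\pts_2$ as a union of several scaled or shifted copies of a base lattice, arranged so that each copy contributes only a few new distinct distances; another is to invoke a number-theoretic input (for example, the density of integers representable as sums of two squares from Landau--Ramanujan) to sharpen the naive range-based count. The exact construction carrying out this refinement is the one given by Elekes in \cite{Elekes95}, and the high-level two-parameter structure of the distance function explained above is the basis for it.
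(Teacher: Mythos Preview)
The paper does not give a proof of Theorem~\ref{th:CircLattice}; it is stated as a result of Elekes and cited to~\cite{Elekes95}. There is therefore no argument in the paper to compare yours against.

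As for the proposal itself: your construction and analysis are correct up through the bound $D(\pts_1,\pts_2)\le\max(A,2m)^2+4B$, and your optimization $A\asymp n^{1/3}$ does yield $D=O(n^{2/3})$ whenever $n\ge 4m^3$. But as you yourself observe, this only matches $O(\sqrt{mn})$ at the threshold $n\asymp m^3$; for $n\gg m^3$ one has $n^{2/3}/\sqrt{mn}=(n/m^3)^{1/6}\to\infty$, so the bound is genuinely weaker than what the theorem asserts. Your final paragraph acknowledges this and then points to~\cite{Elekes95} for the fix, which means the proposal is really a proof of the boundary case together with a citation, not a proof of the full statement.

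The two repair strategies you sketch do not close the gap either. Taking a union of shifted or scaled copies of the base lattice generically produces disjoint distance sets, so $k$ copies of an $n'$-point construction with $D'$ distances give $D\approx kD'$, and optimizing over $k$ one checks this never beats $O(n^{2/3})$. The Landau--Ramanujan input on sums of two squares saves at most a factor of $\sqrt{\log n}$, not the polynomial factor $(n/m^3)^{1/6}$ that is missing. Elekes' construction in~\cite{Elekes95} is not the plain rectangular lattice you describe; to complete the proof for the full range $n\ge 4m^3$ you would need to consult that paper directly.
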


One non-intuitive consequence of Theorem \ref{th:CircLattice} is that the number of distances between $n$ points and a constant number of points can be as small as $\Theta(n^{1/2})$.
Elekes asked how close the bound of Theorem \ref{th:CircLattice} is to being tight.
As far as we know, previously no  non-trivial results were known for this question.
Brunner and Sharir \cite{BS16} derived a lower bound on the number of distinct distances between a set of points on a line and another point set.
However, their second point set has additional restrictions that forbid Elekes' construction.

By relying on a higher distance energies, we derive the following bound.

\begin{theorem} \label{th:BipartiteLineUnrest}
Let $\pts_1$ be a set of $m$ points on a line $\ell$ in $\RR^2$ and let $\pts_2$ be set of $n$ points $\RR^2$.
Then
\begin{equation*}
D(\pts_1,\pts_2) =
\begin{cases}
\Omega(m^{1/2}n^{1/2} \log^{-1/2}n), \hspace{8mm} \text{\emph{when }}m=\Omega(n^{1/2}/\log^{1/3} n),\\
\Omega\left(n^{3/8}m^{3/4}\right), \hspace{22mm} \text{\emph{when }}m=O(n^{1/2}/\log^{1/3} n) \text{ \emph{and} }  m=\Omega(n^{3/10}),\\
\Omega\left(n^{1/2}m^{1/3}\right), \hspace{22mm} \text{\emph{when }}m=O(n^{3/10}).
\end{cases}
\end{equation*}
\end{theorem}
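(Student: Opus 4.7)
The strategy is to bound the (second) distance energy
\[ E = |\{(p_1,q_1,p_2,q_2)\in(\pts_1\times\pts_2)^2 : |p_1q_1|=|p_2q_2|\}|, \]
from which Cauchy--Schwarz (applied to $\sum_\delta E_\delta = mn$) gives $D(\pts_1,\pts_2) \ge (mn)^2/E$. Thus the entire task reduces to an upper bound on $E$ in each of the three regimes, which we carry out by transforming the count into a pseudoline incidence problem.

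Place $\ell$ as the $x$-axis, write $\pts_1 = \{(a,0) : a \in A\}$ with $|A| = m$, and identify each $q = (x_q,h_q) \in \pts_2$ with the planar point $\tilde q = (x_q, h_q^2)$. Then $|(a,0)\,q|^2 = (a-x_q)^2 + h_q^2$, and the equation $|p_1q_1|^2 = |p_2q_2|^2$ with $p_2 = (a_2,0)$ and $v := |p_1q_1|^2$ becomes the statement that $\tilde q_2$ lies on the parabola
\[ \Pi_{a_2,v} : \quad H = v - (x - a_2)^2. \]
Consequently, $E$ equals a weighted incidence count between the $n$ planar points $\{\tilde q : q \in \pts_2\}$ and a family of at most $mD$ parabolas $\Pi_{a,v}$ (with $a \in A$ and $v$ a squared distance), where the weight of $\Pi_{a,v}$ is the multiplicity $E_v$ of the squared distance $v$.

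The key geometric observation is that any two distinct parabolas $\Pi_{a,v}$, $\Pi_{a',v'}$ meet in at most one point: subtracting their defining equations yields a linear equation in $x$. Hence the parabolas form a pseudoline arrangement, so the Szemer\'edi--Trotter theorem gives an incidence bound of the form $O(L^{2/3} n^{2/3} + L + n)$ for any subfamily of $L$ parabolas. To handle the weighting I would apply a dyadic decomposition: group the squared distances into $O(\log n)$ classes $V_j = \{v : E_v \in [2^j, 2^{j+1})\}$, apply Szemer\'edi--Trotter to the parabolas with $v \in V_j$ separately (so $L = m|V_j|$), and sum the resulting bounds on the incidences $I_j$ to estimate $E = \sum_j 2^{2j}|V_j|\,I_j$.

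The three cases of the theorem correspond to which of the three terms of the Szemer\'edi--Trotter bound dominates in the sum defining $E$: the main term $L^{2/3}n^{2/3}$ dominates for small $m$ (case 3, giving $D = \Omega(n^{1/2}m^{1/3})$); the linear-in-parabolas term $L$ dominates for intermediate $m$ (case 2, giving $D = \Omega(n^{3/8}m^{3/4})$); and the linear-in-points term $n$ dominates for large $m$ (case 1, giving $D = \Omega((mn/\log n)^{1/2})$), where the $\log^{1/2}n$ loss is an artifact of summing over the $O(\log n)$ dyadic scales combined with the crude bound $E_v = O(\min(m,n))$ (itself a consequence of the line constraint: each circle around any $q$ meets $\ell$ in at most two points). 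The main obstacle will be the dyadic bookkeeping: extracting the stated exponents and pinpointing the exact thresholds $m = n^{1/2}/\log^{1/3}n$ and $m = n^{3/10}$ at which the dominant term switches requires carefully tracking all three contributions simultaneously at every dyadic scale, and taking the diagonal contributions (quadruples with $q_1=q_2$ or $p_1=p_2$) out of the main count so that they do not inflate $E$ beyond the desired bound.
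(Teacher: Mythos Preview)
Your pseudoline reduction is correct and is precisely the paper's geometric observation (phrased there as: circles centred on $\ell$ meet in at most one point above $\ell$, so the incidence graph is $K_{2,2}$-free once $\pts_2$ is restricted to a half-plane --- a reduction you should also make, since otherwise $q\mapsto\tilde q$ is not injective). The paper, however, uses the \emph{third} energy $E_3$ for case~1 and $E_2$ only for the other two, whereas you propose $E_2$ throughout; this is a real difference. In fact, carrying out the dyadic sum carefully with the Szemer\'edi--Trotter bound $k_j = O(m^2n^2/j^3 + n/j)$ together with the trivial $k_j\le mn/j$ yields $E_2 = O((mn)^{3/2}+n^2)$ and hence $D = \Omega(\min((mn)^{1/2},m^2))$. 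For $m\ge n^{3/10}$ this already matches or beats the stated bounds in cases~1 and~2 (and even removes the $\log^{1/2}n$), so your anticipated log loss is unnecessary, and your attribution of which Szemer\'edi--Trotter term drives which case is reversed. (Also, your formula $E=\sum_j 2^{2j}|V_j|\,I_j$ is garbled; it should be $E\approx\sum_j 2^{2j}|V_j|$, equivalently $\sum_j 2^j I_j$.)

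There is, however, a genuine gap in case~3. When $m<n^{3/10}$ the $n^2$ term dominates $E_2$ --- it comes from the range $j>m\sqrt n$ where Szemer\'edi--Trotter degenerates to $k_j=O(n/j)$ --- and you obtain only $D=\Omega(m^2)$, strictly weaker than $\Omega(n^{1/2}m^{1/3})$. The paper closes this with an additional idea your proposal does not contain: a case split on $\max_\delta m_\delta$. If some distance has multiplicity at least $n^{1/2}m^{4/3}$, then by pigeonhole one of the $m$ circles of that radius carries at least $n^{1/2}m^{1/3}$ points of $\pts_2$, and any single point of $\pts_1$ off its centre already determines $\Omega(n^{1/2}m^{1/3})$ distinct distances to those points. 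Otherwise the dyadic sum for $E_2$ is truncated at $j\approx n^{1/2}m^{4/3}$ instead of $2n$, shrinking the offending contribution to $O(n^{3/2}m^{5/3})$ and giving the stated bound. Without this rich-circle argument, the $E_2$ route cannot reach the case-3 exponent.
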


Note that when $m=\Omega(n^{1/2}/\log^{1/3} n)$, Theorem \ref{th:BipartiteLineUnrest} matches Elekes' bound $O(m^{1/2}n^{1/2})$ up to the $\sqrt{\log n}$.
However, Elekes' construction only holds in the separate range $m=O(n^{1/3})$.
It is not clear whether similar constructions exist for larger values of $m$, and it is also possible that when $m> (n/4)^{1/3}$ the number of distinct distances jumps to $\Omega(n/\sqrt{\log n})$.

While it is conjectured that every set of $n$ points in $\RR^2$ spans $\Omega(n/\sqrt{\log n})$ distinct distances, the Guth--Katz analysis \cite{GK15} leads to the slightly weaker bound $\Omega(n/\log n)$.
In Theorem \ref{th:BipartiteLineUnrest}, when $m=\Omega(n^{1/2}/\log^{1/3} n)$ the distinct distances bound does contain $\sqrt{\log n}$.
In fact, the bound $D(\pts_1,\pts_2) = \Omega(m^{1/2}n^{1/2} \log^{-1/2}n)$ is what one might expect to obtain for the general bipartite variant of the distinct distances problem.
This leads to asking whether the third distance energy could lead to such a distinct distances bound in more general cases.

A proof of Theorem \ref{th:BipartiteLineUnrest} can be found in Section \ref{sec:Bipartite}.

\parag{Subsets with few repeating distances.}
Erd\H os \cite{Erd57,EG70} made the following conjecture.

\begin{conjecture}\label{co:subset}
Let $\pts$ be a set of $n$ points in $\RR^2$.
Then there exists a subset $\pts' \subset \pts$ such that $|\pts'|=\Omega(n^{1/2})$ and no distance is spanned by two pairs of points of $\pts'$.
\end{conjecture}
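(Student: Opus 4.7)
\medskip

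\noindent\textbf{Proof proposal.} The plan is to combine the higher distance energy framework developed in Section \ref{sec:HigerEner} with a probabilistic deletion argument. For each real $\delta > 0$ let $m_\delta = |\{(p,q) \in \pts^2 : |pq| = \delta\}|$, and let $E_k(\pts) = \sum_\delta m_\delta^k$ denote the $k$-th distance energy. I would first record the benchmark that the second moment alone already yields: if each point is retained independently with probability $p$ and one endpoint is then deleted from every monochromatic pair of pairs, the expected size of the resulting valid subset is at least $pn - O(p^4 E_2(\pts))$. Optimizing in $p$ produces a rainbow subset of size $\Omega(n^{4/3} E_2(\pts)^{-1/3})$, which on lattice-type examples degrades to only $\Omega(n^{1/3})$ --- essentially the bound of Charalambides \cite{Chara12}.

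To reach $n^{1/2}$ the proposal is to exploit $E_k$ for all $k \ge 2$ simultaneously, rather than just $k=2$. The $k$-th moment of the number of conflict pairs inside a random $p$-sample is controlled by $p^{2k} E_k(\pts)$, so if one could establish simultaneous estimates of the form $E_k(\pts) \leq C_k\, n^{k+1} (\log n)^{c_k}$, then choosing $p \approx n^{-1/2}$ and applying a Chernoff-type tail bound would leave, after alteration, a rainbow subset of size $\Omega(n^{1/2}/\mathrm{polylog}\, n)$, which is consistent with the conjectured bound.

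The concrete sequence of steps is therefore the following. First, prove a near-tight upper bound on $E_k(\pts)$ for each $k \ge 3$, ideally by an induction that reduces $E_{k+1}$ to an incidence problem between $\pts$ and a weighted arrangement of circles whose weights encode the multiplicities bounded at level $k$. Second, bundle these moments through a H\"older interpolation so that the heavy tail of the multiplicity vector $(m_\delta)_\delta$ becomes summable after random restriction. Third, translate the moment bounds into a deterministic subset through the alteration method, or through a dependent-random-choice argument that first removes points lying on exceptionally many equidistant pairs before applying the random sieve.

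The main obstacle is expected to be the first step. Unlike $E_2$, for which Szemer\'edi--Trotter delivers an essentially tight estimate, no analogous sharp bound on $E_k$ for $k \ge 3$ is presently available, and the integer-lattice construction forces any such bound to be sensitive to the additive structure of the underlying distance set. Overcoming this gap seems to require either a higher-order incidence theorem for points and pencils of concentric circles, or a genuinely new additive-combinatorial input that controls multiplicity clustering across distances. This is precisely where I expect the argument to succeed or fail, and it is the reason that Conjecture \ref{co:subset} has resisted progress for more than sixty years.
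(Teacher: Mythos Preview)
The statement you are attempting to prove is Conjecture~\ref{co:subset}, which the paper presents as an \emph{open problem} of Erd\H os; there is no proof of it in the paper. The paper explicitly states that the current best bound toward this conjecture is Charalambides' $\Omega(n^{1/3}\log^{-1/3}n)$, and its own contribution (Theorem~\ref{th:Subset}) concerns only the weaker question of subsets with no distance repeating more than twice or four times.

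Your proposal has two genuine gaps. First, the bound $E_k(\pts)\le C_k n^{k+1}(\log n)^{c_k}$ that you list as ``step one'' is precisely Conjecture~\ref{co:HigherDistEnergy} of the paper. The authors note that it would follow from the Erd\H os unit distances conjecture, and that even $E_3(\pts)=O(n^{4+\eps})$ is currently out of reach; the best they obtain is $E_3(\pts)=O(n^{30/7}\log^{9/7}n)$ via circle incidences. So your first step is itself a famous open problem, and you offer no mechanism for proving it beyond a vague hope for a ``higher-order incidence theorem.''

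Second, even granting $E_k(\pts)=O(n^{k+1+\eps})$ for every $k$, your deduction of an $\Omega(n^{1/2})$ rainbow subset does not go through. The alteration method for \emph{no} repeated distance is governed by $E_2^*$: with sampling probability $p$ the expected number of conflicting quadruples is $p^4 E_2^*(\pts)=\Theta(p^4 n^3)$, and for $p\approx n^{-1/2}$ this is $\Theta(n)$, far exceeding the expected sample size $\Theta(n^{1/2})$. Invoking ``Chernoff-type tail bounds'' cannot help here, since concentration never drives a nonnegative random variable below its mean. Higher moments $E_k$ for $k\ge3$ control $k$-wise collisions, not pairwise ones; this is exactly why the paper, \emph{assuming} $E_3=O(n^{4+\eps})$, claims only a subset of size $\Omega(n^{2/5-\eps})$ with no distance repeating more than twice, rather than anything close to $n^{1/2}$ with no repetition at all.
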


In other words, the conjecture suggests that every planar point set contains a large subset with no repeating distances.
Charalambides \cite{Chara12} showed that the Guth--Katz result \cite{GK15} implies the existence of a subset of size $\Omega(n^{1/3}\log^{-1/3}n)$ with no repeating distances.
This is the current best bound.
By using higher distance energies, we can show that there exists a larger subset with no distance repeating more than twice.

\begin{theorem} \label{th:Subset}
Let $\pts \subset \RR^2$ be a set of $n$ points.
Then there exists a subset $\pts' \subset \pts$ of size $\Omega(n^{22/63} \log^{-13/63}n)$ such that no distance is spanned by more than four pairs of points of $\pts'$.
Similarly, there exists a subset $\pts' \subset \pts$ of size $\Omega(n^{12/35} \log^{-9/35} n)$ such that no distance is spanned by more than two pairs of points of $\pts'$.
\end{theorem}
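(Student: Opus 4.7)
The plan is to combine upper bounds on higher distance energies with a random-sampling-and-deletion argument, in the same spirit as Charalambides' $\Omega(n^{1/3}\log^{-1/3}n)$ bound but with the standard distance energy replaced by its third- and fifth-order analogues.

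Write $m_d(\pts)$ for the number of ordered pairs in $\pts$ at distance $d$, and set $E_k(\pts) = \sum_d m_d(\pts)^k$. The Guth--Katz theorem yields $E_2(\pts) = O(n^3\log n)$, and the higher distance energies developed earlier in the paper are designed precisely to produce the analogous bounds $E_3(\pts) = O(n^{30/7}\log^{9/7} n)$ and $E_5(\pts) = O(n^{48/7}\log^{13/7} n)$; the exponents are chosen to match the target subset sizes below.

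For the second assertion I would sample $\pts_0 \subset \pts$ by retaining each point independently with probability $p$. Then $\ex{|\pts_0|} = pn$, and the expected number of ordered triples of distinct pairs that both lie inside $\pts_0$ and share a common distance is at most $p^6 E_3(\pts)$, since such a triple uses at most six points. Delete one endpoint from each surviving bad triple; the remaining set $\pts'$ contains no distance spanned by three pairs, hence no distance spanned by more than two. Choosing $p = \Theta((n/E_3)^{1/5})$ balances the sample size against the number of deletions and yields
\[ |\pts'| = \Omega\!\left(\frac{n^{6/5}}{E_3(\pts)^{1/5}}\right) = \Omega(n^{12/35}\log^{-9/35} n). \]
The first assertion follows by exactly the same recipe applied to bad $5$-tuples of pairs, with $p = \Theta((n/E_5)^{1/9})$, giving $|\pts'| = \Omega(n^{10/9}/E_5(\pts)^{1/9}) = \Omega(n^{22/63}\log^{-13/63} n)$.

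The main obstacle is obtaining the higher-energy bounds themselves. A naive dyadic decomposition over multiplicity levels, combined only with $E_2 = O(n^3\log n)$ and the unit-distance bound $\max_d m_d = O(n^{4/3})$, yields just $E_3(\pts) = O(n^{13/3}\log n)$, which is too weak. To reach $n^{30/7}$ I would either exploit a Szemerédi--Trotter-type bound on the number of $t$-rich distances across an optimized range of thresholds $t$, or recast the count of triple (respectively quintuple) distance coincidences as an incidence problem among the $n^2$ curves produced by the Elekes--Sharir parameterization of orientation-preserving rigid motions, and control its $k$-rich intersection structure. A subsidiary technical point is that bad tuples whose pairs overlap on fewer than $2k$ distinct points contribute with a larger relative power of $p^{-1}$; those contributions are controlled by the lower energies $E_2,\ldots,E_{k-1}$ and are absorbed into the main term.
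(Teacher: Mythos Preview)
Your architecture is right and you have correctly identified the target energy bounds, but the deletion step has a real gap. You note that tuples of equal-distance pairs on fewer than $2k$ points survive with probability larger than $p^{2k}$, yet your claim that these contributions are ``controlled by the lower energies $E_2,\ldots,E_{k-1}$ and absorbed into the main term'' is false. Take one point $a$ and $n-1$ generic points on the unit circle around it: there are $\Theta(n^3)$ star triples $\{a,b_1\},\{a,b_2\},\{a,b_3\}$ supported on only four points. With either the worst-case $p\approx n^{-23/35}$ or your adaptive $p=(n/E_3)^{1/5}=n^{-2/5}$ (here $E_3=\Theta(n^3)$), the term $p^4\cdot\Theta(n^3)$ already exceeds $pn$, so the deletions wipe out the entire sample; none of $E_2,\ldots,E_{k-1}$ controls this count. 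The paper's fix is to also delete one vertex from every surviving \emph{isosceles triangle} and to pay for that via the Pach--Tardos bound $t(\pts)=O(n^{2.137})$. Once no isosceles triangle remains, any two equal-distance pairs are vertex-disjoint, so $k$ such pairs automatically span $2k$ distinct points; it then suffices to destroy the tuples contributing to $E_k^*(\pts'')$, for which the $p^{2k}$ scaling is exact.

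For the energy bounds, your first suggestion points in the right direction but needs the circle-specific incidence bound $I=O(m^{6/11}n^{9/11}\log^{2/11}n+\cdots)$ rather than Szemer\'edi--Trotter: applied to $\pts$ against the $nk_j$ circles of $j$-rich radii centered at points of $\pts$, it yields $k_j=O(n^{15/2}j^{-11/2}\log n)$ and hence $\sum_\delta m_\delta^{11/2}=O(n^{15/2}\log^2 n)$ by dyadic summation. The stated bounds on $E_3$ and $E_5$ then follow by H\"older interpolation between this $11/2$-moment and the Guth--Katz bound $E_2=O(n^3\log n)$, with weights $(2/7,5/7)$ and $(6/7,1/7)$ respectively. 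Your Elekes--Sharir alternative is a different mechanism and it is not clear it produces these particular exponents.
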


Once again, our techniques can lead to a variety of related results but we decided not to include too many similar results.
The proof of Theorem \ref{th:Subset} and one of its variants can be found in Section \ref{sec:Subset}.
\vspace{2mm}

\parag{Acknowledgements.} The second author would like to thank Micha Sharir for several useful discussions.

\section{Higher distance energies} \label{sec:HigerEner}

For $a,b\in \RR^2$, we denote by $|ab|$ the Euclidean distance between these two points.
Let $\pts$ be a set of $n$ points in $\RR^2$ and let $d\ge 1$ be an integer.
We define the $d$'\emph{th distance energy} of $\pts$ as
\begin{equation} \label{eq:IntHigherEnergyDef}
E_d(\pts) = \left|\left\{(a_1,b_1,\ldots,a_d,b_d)\in \pts^{2d} :\ |a_1b_1| = \cdots = |a_db_d| >0\right\}\right|.
\end{equation}
The $2d$-tuples are ordered, so  $(a_1,b_1,a_2,b_2,\ldots,a_d,b_d)$ and $(b_1,a_1,a_2,b_2,\ldots,a_d,b_d)$ are considered as two distinct tuples in the above set.

Let $\Delta = \Delta(\pts)$ be the set of distances that are spanned by pairs of points of $\pts$.
For a distance $\delta \in \Delta$, we denote by $m_\delta$ the number of ordered pairs $(a,b)\in \pts^2$ such that $|ab|=\delta$.
Note that the number of $2d$-tuples in \eqref{eq:IntHigherEnergyDef} that correspond to a specific $\delta\in \Delta$ is exactly $m_\delta^d$.
This implies that
\begin{equation} \label{eq:GenHigherEnergyDef}
E_d(\pts) = \sum_{\delta\in \Delta}m_\delta^d.
\end{equation}

Since every ordered pair of distinct points $(a,b)\in \pts^2$ contributes to exactly one $m_\delta$, we have that $\sum_{\delta\in \Delta} m_\delta = 2\binom{n}{2} = n^2-n$.
Let $D=D(\pts) = |\Delta|$ be the number of distinct distances spanned by point of $\pts$.
By H\"older's inequality, for any $d\ge 2$ we have
\begin{equation} \label{eq:dthEnergyLower}
E_{d}(\pts) = \sum_{\delta \in \Delta}{m_{\delta}^{d}}\geq \frac{\left(\sum_{\delta \in \Delta}{m_{\delta}}\right)^{d}}{D^{d-1}}=\frac{\left(n^2-n\right)^d}{D^{d-1}} = \Omega\left(\frac{n^{2d}}{D^{d-1}}\right).
\end{equation}

Using somewhat different notation, Guth and Katz \cite{GK15} derived the tight upper bound
\begin{equation} \label{eq:GuthKatzE2}
E_{2}(\pts) = O\left(n^3 \log n\right).
\end{equation}

Also using different notation, Spencer, Szemer\'edi, and Trotter \cite{SST84} proved that every $\delta\in \Delta$ satisfies
\begin{equation} \label{eq:UnitDist}
m_\delta = O\left(n^{4/3}\right).
\end{equation}
For any integer $d\ge 2$, combining \eqref{eq:GuthKatzE2} and \eqref{eq:UnitDist} implies
\[ E_d(\pts) = \sum_{\delta\in \Delta}m_\delta^d = O\left(\left(n^{4/3}\right)^{d-2} \sum_{\delta\in \Delta}m_\delta^2 \right) = O\left(n^{(4d+1)/3}\log n\right).
\]

It seems reasonable to make the following conjecture

\begin{conjecture} \label{co:HigherDistEnergy}
For every set $\pts$ of $n$ points in $\RR^2$, integer $d\ge 2$, and $\eps>0$,
\[ E_d(\pts) = O(n^{d+1+\eps}). \]
\end{conjecture}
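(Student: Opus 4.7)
\noindent\emph{Proof plan.}~
My plan is to analyze $E_d(\pts)$ via a dyadic decomposition in the distance multiplicities, and to identify which bound on the distribution of $\{m_\delta\}_{\delta \in \Delta}$ is really the bottleneck for reaching the target $O(n^{d+1+\eps})$. Setting $N_k = |\{\delta \in \Delta : m_\delta \ge k\}|$, a standard dyadic decomposition gives
\[ E_d(\pts) \,=\, \sum_{\delta \in \Delta} m_\delta^d \;\le\; O\!\left(\sum_{j \ge 0} 2^{jd}\, N_{2^j}\right). \]
The three available inputs on $N_k$ are: (i) the elementary inequality $N_k \le (n^2-n)/k$ coming from $\sum_\delta m_\delta = n^2 - n$; (ii) the Guth--Katz bound \eqref{eq:GuthKatzE2}, which yields $N_k \le O(n^3 \log n / k^2)$; and (iii) the Spencer--Szemer\'edi--Trotter bound \eqref{eq:UnitDist}, which says that $N_k = 0$ for $k > Cn^{4/3}$.

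Plugging (i)--(iii) into the dyadic sum only recovers the estimate $E_d(\pts) = O(n^{(4d+1)/3}\log n)$ already observed in the paper, since the sum is dominated by the top dyadic scale $2^j \asymp n^{4/3}$. To reach $O(n^{d+1+\eps})$, I would instead try to replace (iii) by the Erd\H os unit distances conjecture $\max_\delta m_\delta \le O(n^{1+\eps})$: feeding this together with (i) into the dyadic sum, truncated at $2^j \lesssim n^{1+\eps}$, yields
\[ E_d(\pts) \;\le\; O\!\left(n^2 \sum_{j \,:\, 2^j \lesssim n^{1+\eps}} 2^{j(d-1)}\right) \;=\; O\!\left(n^{d+1 + (d-1)\eps}\right), \]
which is the desired bound after rescaling $\eps$.

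The main obstacle is that this reduction is essentially tight. Since $E_d(\pts) \ge (\max_\delta m_\delta)^d$, Conjecture \ref{co:HigherDistEnergy} for all $d$ forces $\max_\delta m_\delta \le O(n^{1+1/d+\eps/d})$, which in the limit $d \to \infty$ is exactly the unit distances conjecture. So I do not expect a purely combinatorial dyadic argument to suffice; one would instead need new geometric or algebraic input ruling out many distances whose multiplicity is close to the Spencer--Szemer\'edi--Trotter ceiling $n^{4/3}$. A more modest intermediate goal would be $E_3(\pts) = O(n^{4+\eps})$ (currently only $O(n^{13/3}\log n)$ follows from the above), which already appears to demand a genuine refinement of the Guth--Katz polynomial-method toolkit.
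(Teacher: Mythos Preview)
The statement you are addressing is a \emph{conjecture}, not a theorem: the paper does not prove it and offers no proof to compare against. Your discussion is not a proof either, but your analysis of the obstruction is correct and matches the paper's own commentary exactly --- the paper also observes that the unit distances conjecture $m_\delta = O(n^{1+\eps})$ would immediately imply Conjecture~\ref{co:HigherDistEnergy}, and your converse observation (that $E_d(\pts)\ge (\max_\delta m_\delta)^d$, so the conjecture for all $d$ forces the unit distances bound) makes precise why no softer input can be expected to work.

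One minor correction on the state of the art: for $E_3$ the paper does a bit better than the $O(n^{13/3}\log n)$ you quote. Using the point--circle incidence bound of Theorem~\ref{th:IncSharirZahl} to control $k_j$ and then interpolating via H\"older with the Guth--Katz bound on $E_2$, the paper obtains $E_3(\pts)=O(n^{30/7}\log^{9/7}n)$ (see \eqref{eq:SubsetE3*Upper}), which is strictly below $n^{13/3}$. So the ``modest intermediate goal'' you propose is already partially approached, though of course still far from $n^{4+\eps}$.
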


The unit distances conjecture suggests that $m_\delta = O(n^{1+\eps})$ for every $\delta\in \Delta$, which would immediately imply conjecture \ref{co:HigherDistEnergy}.
In Section \ref{sec:Subset} we use geometric incidences to prove the stronger bound $E_3(\pts) = O\left(n^{30/7}\log^{9/7} n\right)$.
Similar techniques lead to improved bounds for $E_d(\pts)$ when $d\ge 4$.
Further improving these bounds would immediately improve the results of Section \ref{sec:Subset}, and possibly other parts of this paper.

\parag{Distance energy variants.}
For finite sets $\pts_1,\pts_2\subset \RR^2$ and an integer $d\ge 2$, we define the $d$'th distance energy of $\pts_1$ and $\pts_2$ as
\[ E_{d}(\pts_1,\pts_2) = \left\{(a_1,\ldots,a_d,b_1,\ldots,b_d)\in \pts_1^d \times \pts_2^d :\ |a_1b_1|=\cdots = |a_db_d|>0\right\}. \]

Note that in this case we are only interested in distances between points from different sets, and ignore the distances between points in the same set.
This corresponds to a bipartite distinct distances problem, as defined in the introduction.
For such problems we define $\Delta = \Delta(\pts_1,\pts_2)$ to be the set of distances spanned by pairs of $\pts_1\times \pts_2$.
The number of distinct distances is defined accordingly as $D= D(\pts_1,\pts_2) = |\Delta|$.
By imitating the argument that led to \eqref{eq:dthEnergyLower}, we obtain the bound
\begin{equation} \label{eq:dthEnergyLowerBipartite}
E_{d}(\pts_1,\pts_2) = \Omega\left(\frac{|\pts_1|^d|\pts_2|^d}{D^{d-1}}\right).
\end{equation}

Finally, for a point set $\pts\subset \RR^2$ and a positive integer $d\ge2$, we define
\[ E_d^*(\pts) = \left\{(a_1,b_1,\ldots,a_d,b_d)\in \pts^{2d} :\ |a_1b_1| = \cdots = |a_db_d| \text{ and the } 2d \text{ points are distinct} \right\}. \]
In other words, $E_d^*(\pts)$ is a variant of $E_d(\pts)$ that considers only tuples with $2d$ distinct elements.
By definition $E_d^*(\pts) < E_d(\pts)$.

\section{Geometric preliminaries}

\parag{Tools from Discrete Geometry.}
Given a set $\pts$ of points and a set $\curves$ of curves in  $\RR^2$ (such as lines, circles, or sine waves), an \emph{incidence} is a pair $(p,\gamma)\in \pts \times \curves$ such that the point $p$ is contained in the curve $\gamma$.
The number of incidences in $\pts\times\curves$ is denoted as $I(\pts,\curves)$.
The \emph{incidence graph} of $\pts\times\curves$ is a bipartite graph $G=(\pts \cup \curves,E)$, where an edge $(v_j,v_k)\in E$ implies that the point that corresponds to $v_j$ is incident to the curve that corresponds to $v_k$; that is, there is a bijection between the edges of $E$ and the incidences in $\pts\times\curves$.
The following incidence bound is by Pach and Sharir \cite{PS92}.

\begin{theorem} \label{th:PachSharir}
Let $\pts$ be a set of $m$ points and let $\curves$ be a set of $n$ distinct irreducible algebraic curves of degree at most $k$, both in $\RR^2$.
If the incidence graph of $\pts\times\curves$ contains no copy of $K_{2,t}$, then
\[ I(\pts,\curves) = O\left(m^{2/3}n^{2/3}+m+n\right). \]
\end{theorem}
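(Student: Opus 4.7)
The plan is to follow the classical crossing-lemma argument of Sz\'ekely. First I would discard every curve in $\curves$ that contains at most two points of $\pts$; this accounts for at most $2n$ incidences, which will be absorbed into the additive $O(n)$ term at the end. For each surviving curve $\gamma \in \curves$ with $m_\gamma \ge 3$ incidences, traverse $\gamma$ and join consecutive incident points of $\pts$ by the connecting sub-arc of $\gamma$, producing $m_\gamma - 1$ arcs. Summing over the surviving curves realizes a drawing of a multigraph $G$ in $\RR^2$ with vertex set $\pts$ and at least $I(\pts,\curves) - 2n$ edges.

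The $K_{2,t}$-hypothesis enters precisely here: for every pair of points $p,q \in \pts$ there are at most $t-1$ curves of $\curves$ passing through both, so $G$ has edge multiplicity at most $t-1$. I would then apply the multigraph version of the crossing lemma, which asserts that a drawing of a multigraph with $V$ vertices, $E$ edges, and maximum edge multiplicity $\mu$ satisfies $\mathrm{cr}(G) = \Omega(E^3/(\mu V^2))$ as soon as $E \ge C \mu V$ for an absolute constant $C$. With $V = m$ and $\mu = t - 1$ this produces a lower bound of $\Omega(E^3/m^2)$ on the crossing number of the drawing.

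To upper bound the crossing number I use the fact that two distinct irreducible algebraic curves of degree at most $k$ intersect in at most $k^2$ points, by B\'ezout's theorem. Hence the drawing has at most $k^2 \binom{n}{2} = O(n^2)$ crossings, and comparing the two bounds gives $E = O(m^{2/3} n^{2/3})$. Since $E \ge I(\pts,\curves) - 2n$, and the sparse regimes $E < C(t-1)m$ and $m_\gamma \le 2$ are handled by the trivial bound $I = O(m + n)$, we conclude $I(\pts,\curves) = O(m^{2/3} n^{2/3} + m + n)$.

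The main technical obstacle is that an irreducible algebraic curve of degree $k$ need not be a Jordan arc: it may have several connected components and up to $O(k^2)$ singular or self-intersection points, so the ``arcs between consecutive incidences'' could fail to be internally disjoint simple curves. The standard remedy is to cut each curve at its singular and self-intersection points, decomposing $\curves$ into $O(n)$ smooth simple arcs. Because $k$ is an absolute constant this preprocessing only inflates the implicit constants, after which the multigraph drawing is genuine and the crossing-lemma argument above applies verbatim.
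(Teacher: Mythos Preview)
The paper does not prove this theorem at all: it is quoted in the preliminaries as a known tool and attributed to Pach and Sharir \cite{PS92}, with no argument given. So there is no ``paper's own proof'' to compare against.

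Your Sz\'ekely-style crossing-lemma argument is one of the standard modern proofs of this bound and is essentially correct. The edge-multiplicity bound of $t-1$ from the $K_{2,t}$ hypothesis, the B\'ezout upper bound of $O(k^2 n^2)$ on crossings, and the multigraph crossing lemma combine exactly as you describe. Your last paragraph correctly flags the only genuine subtlety (irreducible curves of degree $k$ may have several components and $O(k^2)$ singularities, so one must first cut them into simple arcs before the drawing is honest); since $k$ and $t$ are fixed, this only affects the implicit constants. It is worth noting, for historical context, that the cited 1992 paper of Pach and Sharir predates Sz\'ekely's method and instead uses a cutting/decomposition argument, so your route differs from the original source as well---but it is the proof one finds in most contemporary expositions.
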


The following incidence bound is a combination of results from several papers (for example, see Sharir and Zahl \cite{SZ16}).

\begin{theorem} \label{th:IncSharirZahl}
Let $\pts$ be a set of $m$ points and let $\curves$ be a set of $n$ circles, both in $\RR^2$.
Then
\[ I(\pts,\curves) = O\left(m^{6/11}n^{9/11}\log^{2/11}n+m^{2/3}n^{2/3}+m+n\right).\]
\end{theorem}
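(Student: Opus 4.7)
The plan is to combine two bounds whose maximum yields the theorem. The first, $O(m^{2/3}n^{2/3} + m + n)$, is obtained by direct application of Theorem~\ref{th:PachSharir}: since circles are irreducible algebraic curves of degree $2$, and any two distinct circles meet in at most two points, the incidence graph of $\pts \times \Gamma$ contains no $K_{2,3}$. Theorem~\ref{th:PachSharir} with $k = 2$ and $t = 3$ immediately gives this contribution.

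To obtain the sharper term $O(m^{6/11}n^{9/11}\log^{2/11}n)$, which dominates when $n$ is large relative to $m^{3/2}$, I would use the pseudo-segment cutting technique of Aronov and Sharir. First, invoke the lemma that any $n$ circles in $\RR^2$ can be cut into $\xi = O(n^{3/2}\log n)$ open arcs such that any two of them cross at most once. Up to an additive $O(\xi)$ loss at the cut endpoints, incidences with the original circles are captured by incidences with these arcs. Then apply Sz\'ekely's crossing-lemma argument: draw the arcs in $\RR^2$ and form a topological multigraph whose vertices are the incident points and whose edges connect consecutive incident points along each arc. The crossing number of the drawing is at most $\binom{\xi}{2}$, and the crossing lemma then controls $I(\pts, \text{arcs})$.

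A single direct application of Sz\'ekely's argument yields a bound of the form $m^{2/3}n\log^{2/3}n$, which is weaker than the stated term. To extract the sharper exponents $6/11$ and $9/11$, one refines the argument by dyadically grouping the circles according to how many points of $\pts$ they contain, applying the cutting and Sz\'ekely separately within each group, and summing. Optimizing over the dyadic thresholds yields the stated bound, with the $\log^{2/11}n$ factor arising from the per-group cutting cost $\log n$.

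The main obstacle is the pseudo-segment cutting lemma itself: showing that $n$ circles admit a cutting into only $O(n^{3/2}\log n)$ arcs. This requires bounding the number of empty ``lenses'' (regions bounded by arcs of two circles with no other arcs inside) in an arrangement of $n$ circles, which relies on a nontrivial argument about lens systems that exploits the constant-degree algebraic structure of circles. A secondary subtlety is executing the dyadic balancing to pin down the precise exponent $6/11$; the remaining additive $O(m+n)$ terms are absorbed along the way.
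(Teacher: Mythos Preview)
The paper does not prove Theorem~\ref{th:IncSharirZahl}; it is stated in the preliminaries as a known result, with the remark that it ``is a combination of results from several papers (for example, see Sharir and Zahl \cite{SZ16}).'' There is therefore no paper proof to compare against.

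Your sketch is a reasonable outline of how the bound is actually obtained in the literature: cut the circles into pseudo-segments (via the lens-counting machinery of Tamaki--Tokuyama and Aronov--Sharir, later sharpened by Marcus--Tardos), then feed the resulting pseudo-segment family into a Sz\'ekely/Szemer\'edi--Trotter type argument. One correction: the step that upgrades the naive $m^{2/3}n\log^{2/3}n$ to $m^{6/11}n^{9/11}\log^{2/11}n$ is usually not done by dyadically grouping circles by richness, but by a $(1/r)$-cutting of the plane into cells, applying the pseudo-segment incidence bound inside each cell, and optimizing over $r$; this is where the exponents $6/11$ and $9/11$ fall out, and the $\log^{2/11}n$ is the residue of the $\log n$ in the cutting-number bound after the optimization. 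Your dyadic-richness idea does not obviously recover these exponents. Since the result is cited rather than proved here, this is a minor point, but if you intend to actually supply a proof you should follow the cutting-based balancing rather than the richness decomposition.
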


The following incidence result is by Sharir and Zahl \cite{SZ16}.

\begin{theorem} \label{th:SharirZahl}
Let $\pts$ be a set of $m$ points and let $\curves$ be a set of $n$ irreducible algebraic curves of degree at most $k$ in $\RR^2$. Assume that we can parameterize these curves using $s$ parameters.
Then for every $\eps>0$ we have
\[ I(\pts,\curves) = O\left(m^{\frac{2s}{5s-4}+\eps}n^{\frac{5s-6}{5s-4}}+m^{2/3}n^{2/3}+m+n\right).\]
\end{theorem}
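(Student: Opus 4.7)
The plan is to combine polynomial partitioning in $\RR^2$ with a Kővári--Sós--Turán--type baseline that exploits the $s$-parameter structure of $\curves$. The first step establishes that baseline. Since $\curves$ is parametrized by $s$ real coordinates and each incidence condition $p \in \gamma$ becomes a polynomial equation of bounded degree in those parameters, through any $s$ sufficiently generic points of $\RR^2$ there pass only a bounded number (depending on $k$) of members of $\curves$. Consequently the incidence graph of $\pts \times \curves$ is $K_{s,t}$-free for some $t = t(k)$, and Kővári--Sós--Turán gives the unconditional bound $I(\pts,\curves) = O(n m^{1-1/s} + m)$. This will serve as the cell-wise bound.

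Next I would apply polynomial partitioning in $\RR^2$. For a degree $D$ to be optimized later, pick $P \in \RR[x,y]$ of degree $D$ whose zero set $Z(P)$ cuts the complement into $O(D^2)$ open cells, each containing at most $O(m/D^2)$ points of $\pts$. Any curve not contained in $Z(P)$ meets at most $O(Dk)$ cells by Bezout, so $\sum_i n_i = O(Dn)$ where $n_i$ is the number of curves meeting cell $i$. Applying the KST baseline in each cell and using the concavity of $x \mapsto x^{1-1/s}$ bounds the interior contribution by $O\bigl(nD \cdot (m/D^2)^{1-1/s} + m + Dn\bigr)$.

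The third step handles incidences on $Z(P)$. By Bezout at most $O(Dk)$ irreducible curves of degree $\le k$ lie entirely on $Z(P)$; these are handled separately (each contributes at most $m$ incidences). Incidences between remaining curves and points that happen to lie on $Z(P)$ are bounded by recursing onto the one-dimensional variety $Z(P)$ and invoking a point-curve bound there, where the $s$-parameter condition restricts to a lower-dimensional stratum; this is the step responsible for the $n^\eps$ loss, since the recursion is iterated $O(1/\eps)$ times to balance. Finally, choosing $D$ to balance the main interior term $nD(m/D^2)^{1-1/s}$ against the contributions absorbed by the variety $Z(P)$ produces the claimed exponents $\frac{2s}{5s-4}$ and $\frac{5s-6}{5s-4}$; the additional $m^{2/3}n^{2/3}+m+n$ term reflects the Pach--Sharir regime that dominates when $m$ and $n$ are unbalanced.

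The main obstacle is the bookkeeping on the partitioning variety. The restricted family of curves on $Z(P)$ does not automatically inherit a cleaner parameter count, so a naive recursion easily loses a polynomial factor rather than just $n^\eps$. The technical heart of the proof is a careful stratification of $Z(P)$ into smooth components, a bound on the number of curves of $\curves$ contained in or heavily incident with each component (using the parameter structure once more), and an induction that drops the effective dimension cleanly while keeping the exponent $\frac{2s}{5s-4}$ intact. Once that delicate step is managed, the optimization of $D$ and the final accounting are routine.
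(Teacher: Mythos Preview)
First, a framing point: the paper does not prove this theorem at all. Theorem~\ref{th:SharirZahl} is quoted from Sharir and Zahl \cite{SZ16} as a black-box preliminary, so there is no ``paper's own proof'' to compare against beyond what is in that reference.

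As for your sketch itself, there is a genuine gap, and it is exactly where you wave your hands. The interior term you compute from polynomial partitioning plus cellwise K\H{o}v\'ari--S\'os--Tur\'an is, after optimizing $D$, the Pach--Sharir bound $O\bigl(m^{s/(2s-1)} n^{(2s-2)/(2s-1)} + m + n\bigr)$, not the stronger Sharir--Zahl exponent $\tfrac{2s}{5s-4}$. Concretely, for $s=3$ your scheme yields $m^{3/5}n^{4/5}$ rather than the claimed $m^{6/11}n^{9/11}$. No choice of $D$ and no amount of recursion onto $Z(P)$ can repair this, because the cellwise baseline is already too weak: the exponent $\tfrac{2s}{5s-4}$ is simply not produced by balancing $nD(m/D^2)^{1-1/s}$ against the boundary terms you list.

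What you are missing is the central idea of \cite{SZ16}: before partitioning, one cuts the $n$ curves into a controlled number of \emph{pseudo-segments} (Jordan arcs, any two meeting at most once), using the $s$-parameter structure to bound the number of tangencies and hence the number of cuts needed. With pseudo-segments in hand, the cellwise bound becomes a Szemer\'edi--Trotter-type estimate rather than raw K\H{o}v\'ari--S\'os--Tur\'an, and it is the interplay between the cutting cost and the Szemer\'edi--Trotter savings that produces the exponent $\tfrac{2s}{5s-4}$. Your proposal never invokes this cutting step, so as written it proves only the weaker Pach--Sharir theorem.
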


Note that Theorem \ref{th:SharirZahl} almost generalizes Theorem \ref{th:IncSharirZahl}, except that $m^\eps$ is asymptotically larger than $\log^{2/11}n$ (for the range of $m$ and $n$ in which the term $m^{6/11}n^{9/11}$ dominates the bound).

We will rely on the following distinct distances result of Pach and de Zeeuw \cite{PdZ17}.

\begin{theorem} \label{th:PachdeZeeuw}
Let $\gamma_1$ and $\gamma_2$ be two distinct irreducible algebraic curves of degree at most $d$ in $\RR^2$, which are not
parallel lines, orthogonal lines, or concentric circles.
Let $\pts_1$ be a set of $m$ points that are incident to $\gamma_1$ and let $\pts_2$ be a set of $n$ points incident to $\gamma_2$.
Then
\[ D(\pts_1,\pts_2) = \Omega\left( \min\left\{m^{2/3}n^{2/3},m^{2}, n^{2}\right\}\right). \]
\end{theorem}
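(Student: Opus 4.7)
The plan is to upper-bound the bipartite second distance energy $E_2(\pts_1, \pts_2)$; combined with the lower bound \eqref{eq:dthEnergyLowerBipartite} for $d=2$, this yields $D(\pts_1,\pts_2) \ge \Omega(m^2 n^2 / E_2(\pts_1, \pts_2))$. Dividing $m^2n^2$ by the three summands of the targeted bound $E_2 = O(m^{4/3}n^{4/3} + m^2 + n^2)$ produces exactly $m^{2/3}n^{2/3}$, $n^2$, and $m^2$, whose minimum is the quantity claimed by the theorem.

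The next step recasts $E_2$ as an incidence count. I would parameterize $\gamma_2$ by a single real parameter, so that the surface $\gamma_2 \times \gamma_2$ is identified with an open subset of $\RR^2$. For each ordered pair $(a_1, a_2) \in \pts_1^2$, define the curve
\[
C_{a_1, a_2} = \left\{(b_1, b_2) \in \gamma_2 \times \gamma_2 :\ |a_1 b_1|^2 = |a_2 b_2|^2\right\}.
\]
The defining equation is polynomial of bounded degree in the two parameters, so each $C_{a_1, a_2}$ is an algebraic plane curve of degree bounded in terms of $d$. By construction, $E_2(\pts_1, \pts_2)$ equals the number of incidences between the $n^2$ points of $\pts_2^2$ and the $m^2$ curves $\{C_{a_1, a_2}\}$. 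Applying Theorem \ref{th:PachSharir} to this incidence problem yields the upper bound $O((n^2)^{2/3}(m^2)^{2/3} + n^2 + m^2)$, provided the incidence graph is $K_{2,t}$-free for some constant $t = t(d)$.

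A $K_{2,t}$ in this graph corresponds to two fixed points $(b_1,b_2), (b_1', b_2') \in \gamma_2^2$ together with $t$ distinct pairs $(a_1, a_2) \in \gamma_1^2$ satisfying both $|a_1 b_1| = |a_2 b_2|$ and $|a_1 b_1'| = |a_2 b_2'|$. Writing out both squared-distance equations and subtracting cancels the $|a_1|^2 - |a_2|^2$ term, producing one linear constraint on $(a_1,a_2) \in \RR^4$; together with one of the original quadratic equations, this cuts the $2$-dimensional variety $\gamma_1 \times \gamma_1$ down to an expected zero-dimensional intersection of size $O(d^{O(1)})$.

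The main obstacle is confirming that this intersection is always of bounded size, and this is precisely where the hypothesis excluding parallel lines, orthogonal lines, and concentric circles enters. A continuous family of solutions $(a_1, a_2)$ would reflect a shared algebraic symmetry between $\gamma_1$ and the perpendicular bisectors determined by the $b$'s, and such a one-parameter symmetry can persist only in one of the three excluded configurations (translation along a common direction for parallel lines, a reflection for orthogonal lines, and a common rotation for concentric circles). A case analysis ruling out these degeneracies outside the excluded configurations completes the $K_{2,t}$-free verification, and Theorem \ref{th:PachSharir} then delivers the required bound on $E_2$, finishing the proof.
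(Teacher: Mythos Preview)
The paper does not prove Theorem \ref{th:PachdeZeeuw}; it is quoted in the preliminaries as a result of Pach and de Zeeuw \cite{PdZ17} and used as a black box in Section \ref{sec:Bipartite}. There is therefore no in-paper proof to compare your attempt against.

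Your outline does track the strategy of the original Pach--de Zeeuw argument: upper-bound $E_2(\pts_1,\pts_2)$ by an incidence count on $\gamma_2\times\gamma_2$ and invoke a Pach--Sharir type bound after establishing a $K_{2,t}$-free condition. Two comments are in order. First, an irreducible real algebraic curve generally has no global polynomial parameterization by one real parameter; in \cite{PdZ17} this is handled by projecting $\gamma_2^2\subset\RR^4$ to a generic coordinate $2$-plane rather than by parameterizing, and your write-up should do likewise. Second, and more importantly, virtually all of the work in this theorem is the $K_{2,t}$-free verification, which you have compressed to ``a case analysis ruling out these degeneracies.'' The subtraction step you describe is correct and does produce one linear constraint, but proving that the resulting system has only $O_d(1)$ solutions on $\gamma_1\times\gamma_1$ \emph{precisely} outside the three excluded configurations is a genuine algebraic-geometric argument occupying several pages in \cite{PdZ17}. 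As it stands your proposal is an accurate roadmap of the Pach--de Zeeuw proof rather than a self-contained proof: the classification of degenerate configurations is asserted, not carried out.
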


For a point set $\pts \subset \RR^2$, let $t(\pts)$ denote the number of isosceles triangles that are spanned by $\pts$ (that is, isosceles triangles that have their three vertices in $\pts$). The following result is by Pach and Tardos \cite{PT02}.

\begin{theorem} \label{th:Isosceles}
Let $\pts$ be a set of $n$ points in $\RR^2$.
Then $t(\pts) = O(n^{2.137})$
\end{theorem}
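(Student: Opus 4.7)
The plan is to reduce counting isosceles triangles to an incidence problem between points and perpendicular bisectors, and then combine Szemer\'edi--Trotter for points and lines with a structural estimate on lines that serve as the axis of reflection symmetry for many pairs of $\pts$.

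Each isosceles triangle with vertex set $\{a,b,c\}$ and $|ab|=|ac|$ has a distinguished apex $a$ (and every equilateral triangle is counted at most three times this way), so up to constants it suffices to bound
\[
T \;=\; \bigl|\{(a,b,c)\in\pts^3:\ b\ne c,\ |ab|=|ac|\}\bigr|.
\]
Since $|ab|=|ac|$ is equivalent to $a$ lying on the perpendicular bisector $\ell_{bc}$ of the segment $bc$, this count is exactly $T=\sum_{(b,c)}|\pts\cap\ell_{bc}|$.

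Next I would group the ordered pairs by their bisector. Let $\mathcal L$ be the set of distinct perpendicular bisectors arising from pairs in $\pts$; for $\ell\in\mathcal L$ set $p(\ell)=|\pts\cap\ell|$ and let $b(\ell)$ be the number of ordered pairs of points of $\pts$ that are reflections of one another across $\ell$. Then $T=\sum_{\ell\in\mathcal L}b(\ell)\,p(\ell)$, and I would estimate this sum via a dyadic decomposition with respect to the two parameters $p(\ell)\asymp P$ and $b(\ell)\asymp B$. Two ingredients are needed: (i) the Szemer\'edi--Trotter theorem, which caps the number of lines with $\geq P$ points of $\pts$ at $O(n^2/P^3+n/P)$; and (ii) a structural bound on the number of ``reflection-rich'' lines, i.e.\ those with $b(\ell)\geq B$. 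The trivial double-counting $\sum_\ell b(\ell)\leq\binom{n}{2}$ gives only $O(n^2/B)$ such lines, and the crux of the argument is to improve this to a bound of the form $O(n^2/B^{1+\gamma})$ for some $\gamma>0$, exploiting that any such line is the symmetry axis of $\geq 2B$ points of $\pts$ and so forces rigid local structure.

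With both estimates in hand, I would substitute into the dyadic sum $\sum_{P,B}P\cdot B\cdot\min\bigl(\#\{\ell:p(\ell)\asymp P\},\ \#\{\ell:b(\ell)\asymp B\}\bigr)$, subject also to the global constraints $\sum_\ell p(\ell)\leq O(n^{4/3}|\mathcal L|^{2/3}+|\mathcal L|)$ from Szemer\'edi--Trotter and $\sum_\ell b(\ell)\leq\binom n2$, and balance $P$, $B$, and $|\mathcal L|$ to extract the announced exponent. The main obstacle is step (ii); without any such improvement one only recovers the weak bound of order $n^{7/3}$ that comes from combining $\sum_\ell b(\ell)=O(n^2)$ with the unit-distances estimate $p(\ell)=O(n^{4/3})$ from \eqref{eq:UnitDist}. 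Quantifying precisely how many lines of a planar $n$-point set can be axes of many symmetric pairs---most plausibly by an incidence argument between reflection-rich lines and points in an appropriate parameter space, in the spirit of the Elekes--Sharir framework used for the distinct-distances problem---should furnish the needed $\gamma>0$, and the precise numerical exponent near $2.137$ would then emerge from optimizing the dyadic balance.
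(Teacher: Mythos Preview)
The paper does not prove Theorem~\ref{th:Isosceles}; it is quoted as a known result of Pach and Tardos~\cite{PT02} and used as a black box in Section~\ref{sec:Subset}. So there is no ``paper's own proof'' to compare against---only the original Pach--Tardos argument.

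Your framework is exactly the one Pach and Tardos use: rewrite the count as $\sum_{\ell} b(\ell)\,p(\ell)$ over perpendicular bisectors, control $p(\ell)$ via Szemer\'edi--Trotter, and control the number of lines with large $b(\ell)$ by a separate structural argument. So the setup is correct and matches the literature.

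The genuine gap is that you do not actually carry out step~(ii). You write that ``the main obstacle is step~(ii)'' and propose that ``an incidence argument \ldots\ in the spirit of the Elekes--Sharir framework \ldots\ should furnish the needed $\gamma>0$,'' but you do not supply that argument, and the specific exponent $2.137$ is never derived. In the Pach--Tardos paper the bound on reflection-rich lines is the heart of the proof and is obtained by a nontrivial incidence argument (roughly: a line that is the perpendicular bisector of many pairs forces many points of $\pts$ to lie on a small family of parallel lines, and one then bounds how often this can happen across many bisectors); the numerical exponent comes out of balancing this bound against Szemer\'edi--Trotter. Without actually establishing a concrete bound of the form $|\{\ell:b(\ell)\ge B\}|=O(n^2/B^{1+\gamma})$ for an explicit $\gamma$, your proposal remains a plan rather than a proof, and in particular nothing in what you wrote pins down $2.137$ as opposed to any other exponent between $2$ and $7/3$.
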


\parag{Tools from Algebraic Geometry.}
For a polynomial $f\in \RR[x_1,\ldots,x_d]$ we denote by $\vb(f)$ the variety defined by $f$ (that is, the set of points in $\RR^d$ on which $f$ vanishes).

\begin{theorem}[Milnor--Thom \cite{Mil64}] \label{th:Milnor}
Let $f_1,\ldots,f_m\in \RR[x_1,\ldots,x_d]$ be of degree at most $k$.
Then the number of connected
components of $\vb(f_1,\ldots,f_m)$ is at most
\[k(2k-1)^{d-1}. \]
\end{theorem}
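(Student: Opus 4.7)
The plan is to reduce the problem to counting connected components of a single real hypersurface and then combine Morse theory with B\'ezout's theorem.

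First, I would collapse the system into one polynomial by setting $F = f_1^2 + \cdots + f_m^2$. Since each $f_i$ is real, $\vb(f_1,\ldots,f_m) = \vb(F)$, and $\deg F \leq 2k$. This eliminates the dependence on $m$ and reduces the task to bounding the number of connected components of the real zero set of a single polynomial in $\RR[x_1,\ldots,x_d]$ of degree at most $2k$.

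Second, I would pass to a smooth, bounded approximation. By Sard's theorem, for almost all small $\eta > 0$ the value $\eta$ is a regular value of $F$, so $\vb(F - \eta)$ is a smooth hypersurface wherever it is nonempty. Intersecting with a large ball $B_R$ makes it compact. A continuity/tubular-neighborhood argument then gives an injection from connected components of $\vb(F)$ into connected components of $\vb(F - \eta) \cap B_R$ for all sufficiently small $\eta > 0$ and sufficiently large $R$. This step is the main technical obstacle, since singular components and components stretching to infinity require some care, including possibly choosing the sign of the perturbation appropriately for different local behavior of $F$ near its zero set.

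Third, I would apply Morse theory on the smooth compact manifold $M = \vb(F - \eta) \cap B_R$. After a generic orthogonal change of coordinates, the linear function $\ell(x) = x_1$ is a Morse function on $M$, and each connected component of $M$ contains at least one local maximum and one local minimum of $\ell$, hence contributes at least two critical points. A point $x \in M$ is a critical point of $\ell|_M$ precisely when the gradient $\nabla F$ at $x$ is parallel to $e_1$, i.e., when $\partial F/\partial x_2 = \cdots = \partial F/\partial x_d = 0$ holds together with $F - \eta = 0$.

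Fourth, I would invoke B\'ezout's theorem to count these critical points. The system consists of $d$ polynomial equations of degrees at most $2k, 2k-1, \ldots, 2k-1$, so the number of complex solutions, and hence the number of real critical points, is at most $(2k)(2k-1)^{d-1}$. Dividing by $2$ for the mandatory max/min pairing yields at most $k(2k-1)^{d-1}$ components of $M$, and therefore at most $k(2k-1)^{d-1}$ connected components of the original variety $\vb(f_1,\ldots,f_m)$, matching the stated bound.
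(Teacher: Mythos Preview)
The paper does not give its own proof of this statement: Theorem~\ref{th:Milnor} is quoted as a classical result of Milnor (and Thom) and is simply cited from \cite{Mil64} in the preliminaries section, with no argument supplied. There is therefore nothing in the paper to compare your proposal against.

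That said, your sketch is essentially Milnor's original argument, and it is sound at the level of an outline. The reduction $F=\sum f_i^2$ is standard and gives the single-hypersurface setting; the perturbation to a regular value, the use of a generic linear height function as a Morse function on the compact smooth level set, and the critical-point count via B\'ezout for the system $F-\eta=0$, $\partial F/\partial x_2=\cdots=\partial F/\partial x_d=0$ of degrees $2k,2k-1,\ldots,2k-1$ are exactly the steps in \cite{Mil64}. You correctly flag the one genuinely delicate point, namely showing that each component of $\vb(F)$ is shadowed by a component of the nearby smooth level set (Milnor handles unboundedness by first restricting to a large ball and then taking a limit); once that is in place, the max/min pairing gives the factor of $2$ and the stated bound $k(2k-1)^{d-1}$.
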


A polynomial $f\in \RR[x,y]$ is said to be \emph{reducible} if there exist polynomials $f_1,f_2\in \RR[x,y]$ of positive degrees such that $f(x,y) = f_1(x,y) \cdot f_2(x,y)$.
A polynomial that is not reducible is said to be \emph{irreducible}.
The following result is a combination of \cite{Ayad02} and \cite{Stein89} (for more details, see for example \cite{RSS16}).

\begin{theorem} \label{th:Stein}
If $f\in \RR[x,y]$ is indecomposable, then the polynomial $f(x,y) - \lambda$ is reducible for at most $\deg f$ values of $\lambda\in \RR$.
\end{theorem}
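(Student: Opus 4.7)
My plan is to exploit the fact that reducibility of the specializations $f(x,y) - \lambda$ is controlled by the geometry of the pencil of curves $C_\lambda := \{f = \lambda\}$. I would proceed in three stages: a reduction to an irreducibility statement for the generic fiber $f(x,y) - T$ (with $T$ a new indeterminate), an application of a Bertini--Noether specialization theorem to deduce finiteness of the reducible specializations, and finally a B\'ezout-type count to bring the count down to $\deg f$.

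First I would show that indecomposability of $f$ is equivalent to $f(x,y) - T$ being absolutely irreducible in $\overline{\RR(T)}[x,y]$. The easy direction: if $f = g(h(x,y))$ with $\deg g \ge 2$, then over $\overline{\RR(T)}$ one factors $g(z) - T = \prod_i (z - \alpha_i(T))$, so $f - T = \prod_i (h(x,y) - \alpha_i(T))$ is a nontrivial factorization. The converse comes from L\"uroth-type reasoning on the function field extension $\RR(x,y)/\RR(f)$: a nontrivial factorization of $f - T$ over $\overline{\RR(T)}$ supplies an intermediate field $\RR(f) \subsetneq \RR(h) \subsetneq \RR(x,y)$, and this intermediate field produces $g \in \RR[z]$ with $f = g \circ h$, contradicting indecomposability.

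Second, I would apply Bertini--Noether: once $F(T,x,y) := f(x,y) - T$ is absolutely irreducible in $\overline{\RR(T)}[x,y]$, the locus of parameters $\lambda_0 \in \RR$ for which the specialization $f - \lambda_0$ becomes reducible over $\RR$ is a proper Zariski-closed subset of the affine $\lambda$-line, hence finite. To upgrade this finite count to the sharp bound $\deg f$, I would try the following B\'ezout/intersection argument. For each reducible value $\lambda_i$, pick a nontrivial factorization $f - \lambda_i = P_i Q_i$ in $\RR[x,y]$. Since $\lambda_i \neq \lambda_j$ for $i \neq j$, the affine plane curves $\{P_i = 0\}$ and $\{P_j = 0\}$ share no common zero, as any such zero would force both $f = \lambda_i$ and $f = \lambda_j$. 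Therefore every intersection in $\PP^2$ lies on the line at infinity, and B\'ezout forces the top-degree forms of the $P_i$ to be compatible with the top-degree form $f_d$ of $f$: each leading form $(P_i)_{\mathrm{top}}$ must divide $f_d$, and the way these leading parts are distributed among the $(P_i, Q_i)$ should cap the number of distinct $\lambda_i$ by $\deg f$.

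The principal obstacle is the sharpness of this final step. Finiteness follows cleanly from Bertini--Noether, but pinning the count down to exactly $\deg f$ requires handling subtleties at infinity: repeated factors of $f_d$, coincidences between the leading forms of different $P_i$, and nontrivial intersection multiplicities on the line at infinity. Handling these uniformly --- most plausibly via resolution of the indeterminacy locus of the pencil $\{f = \lambda\}$ on a blowup of $\PP^2$, or via a careful analysis of the Newton polygon of $f$ --- is where I would expect the real work to lie.
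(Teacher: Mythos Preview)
The paper does not prove this statement at all: it is quoted as a black box, attributed to the combination of Ayad \cite{Ayad02} and Stein \cite{Stein89}, with a pointer to \cite{RSS16} for further details. So there is no ``paper's own proof'' to compare against; the authors simply invoke the result where it is needed (in the proofs of Theorems~\ref{th:StructureElekRon} and~\ref{th:GeneralStructure}) to discard the few $\delta$ for which $f(x,y)-\delta$ is reducible.

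As for your sketch itself: your first two stages are the standard route and are essentially correct. The equivalence between indecomposability of $f$ and absolute irreducibility of $f(x,y)-T$ over $\overline{\RR(T)}$ is classical (L\"uroth / composition arguments), and Bertini--Noether then gives finiteness of the exceptional set of $\lambda$'s immediately. Where your outline is genuinely incomplete is exactly where you flag it: the quantitative step bounding the number of reducible fibers by $\deg f$. Your B\'ezout idea --- that the leading forms $(P_i)_{\mathrm{top}}$ all divide the top form $f_d$ --- does not by itself cap the number of distinct $\lambda_i$, since nothing prevents many $P_i$ from sharing the same leading form, and the argument as stated does not exploit the degrees $\deg P_i + \deg Q_i = \deg f$ in any way that sums to a bound on the number of $\lambda_i$. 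Stein's actual argument proceeds rather differently, via an analysis of the ``spectrum'' $\sigma(f) = \{\lambda : f - \lambda \text{ is reducible}\}$ using derivations and the Jacobian, and shows in fact that $\sum_{\lambda \in \sigma(f)} (n(\lambda)-1) < \deg f$, where $n(\lambda)$ is the number of irreducible factors of $f-\lambda$; Ayad's contribution extends the base field. If you want a self-contained proof, that is the machinery you would need to reconstruct --- the B\'ezout-at-infinity heuristic you propose does not seem to close the gap.
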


Finally, we will rely on the following Schwartz--Zippel variant in $\RR^2$.

\begin{lemma} \label{le:SchZipp}
Let $f\in \RR[x,y]$ be a polynomial of degree $d>0$, and let $A,B \subset \RR$ be finite sets.
Then
\[ |\vb(f)\cap (A\times B)| = O(|A|+|B|). \]
\end{lemma}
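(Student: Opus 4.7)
The plan is to slice the product $A \times B$ by vertical lines and handle each $a \in A$ by a univariate argument, distinguishing the $a$'s for which $f(a,\cdot)$ collapses to the zero polynomial from the generic ones. More precisely, I would write $f(x,y) = \sum_{i=0}^{d} c_i(x)\, y^i$ with $c_i \in \RR[x]$ of degree at most $d$, and split the sum $|\vb(f) \cap (A\times B)| = \sum_{a \in A} |\{b \in B : f(a,b)=0\}|$ into two cases according to whether $f(a,y)$ is identically zero as a polynomial in $y$.

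First I would handle the generic case: for every $a \in A$ with $f(a,y) \not\equiv 0$, the polynomial $f(a,y) \in \RR[y]$ has degree at most $d$ and hence at most $d$ roots, so such an $a$ contributes at most $d$ elements to $\vb(f) \cap (\{a\} \times B)$. Summed over all $a \in A$, this gives a contribution of at most $d |A|$.

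Next I would bound the number of bad $a \in A$, i.e.\ those for which $f(a,y) \equiv 0$. Such an $a$ must be a common root of all the coefficient polynomials $c_0,\ldots,c_d$. Since $f$ is not identically zero, at least one $c_{i_0}$ is a nonzero polynomial in $\RR[x]$ of degree at most $d$, so it has at most $d$ roots. Consequently there are at most $d$ bad values of $a$, each contributing at most $|B|$ pairs, for a total of at most $d|B|$.

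Adding the two cases yields $|\vb(f) \cap (A\times B)| \le d(|A|+|B|) = O(|A|+|B|)$, which is the desired bound. There is really no obstacle here: the only subtlety is to remember that $\vb(f)$ can contain whole vertical lines when $f$ has a factor of the form $x-a$, which is exactly why the case distinction above is needed; once that is in place, the argument is a one-variable degree count applied twice.
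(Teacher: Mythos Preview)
Your argument is correct and is exactly the standard two-variable Schwartz--Zippel proof. Note, however, that the paper does not actually supply a proof of this lemma: it is stated in the preliminaries section as a known fact and used as a black box. So there is nothing to compare against; your write-up would serve perfectly well as the omitted proof.
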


\section{Distinct Distances with Local Properties}\label{sec:DDlocal}

In this section we prove Theorem \ref{th:LocalProp}.
We begin by recalling the statement of this theorem.
\vspace{1mm}

\noindent {\bf Theorem \ref{th:LocalProp}.}
\emph{For any integers $c,d \ge 2$ we have}
\[ \phi\left(n,c(d+1),\binom{c(d+1)}{2} - dc + (d+1) \right) = \Omega\left(n^{1+\frac{1}{d}}\right). \]

To prove the theorem, we will rely on the following simple counting lemma (for example, see \cite[Lemma 2.3]{Jukna11}).

\begin{lemma} \label{le:GeneralSetIntersection}
Let $A$ be a set of $n$ elements and let $d\ge 2$ be an integer.
Let $A_1,\ldots,A_k$ be subsets of $A$, each of size at least $m$.
If $k \ge 2d n^d/m^d$ then there exist $1\le j_1 < \ldots < j_d \le d$ such that $|A_{j_1}\cap \ldots \cap A_{j_d}| \ge \frac{m^d}{2n^{d-1}}$.
\end{lemma}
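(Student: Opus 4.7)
The plan is a textbook double-counting argument driven by convexity. For each $a \in A$, I would define the multiplicity $r_a = |\{i : a \in A_i\}|$, noting that $\sum_{a \in A} r_a = \sum_{i=1}^k |A_i| \ge km$. Counting in two ways the pairs $(S, a)$ with $S$ a $d$-subset of $\{1, \ldots, k\}$ and $a \in \bigcap_{i \in S} A_i$ gives the key identity
\[
\sum_{|S| = d} \left|\bigcap_{i \in S} A_i\right| = \sum_{a \in A} \binom{r_a}{d}.
\]
By Jensen's inequality applied to the convex function $\binom{x}{d}$, the right-hand side is at least $n \binom{km/n}{d}$. The hypothesis $k \ge 2dn^d/m^d$, combined with the trivial $m \le n$, forces $km/n \ge 2d$, which makes the falling factorial defining $\binom{km/n}{d}$ comparable to $(km/n)^d/d!$ up to constants depending only on $d$.

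To extract the precise constant $m^d/(2n^{d-1})$ claimed in the statement, I would recast the argument using ordered $d$-tuples. The power-mean inequality yields $\sum_a r_a^d \ge (km)^d/n^{d-1}$, and this quantity equals $\sum_{(j_1,\ldots,j_d) \in \{1,\ldots,k\}^d} |A_{j_1}\cap\cdots\cap A_{j_d}|$. The ordered tuples with at least one repeated coordinate number at most $\binom{d}{2}k^{d-1}$, and each contributes at most $n$ to the sum; the hypothesis on $k$ is precisely what is needed to make this error term at most half of $(km)^d/n^{d-1}$. Dividing what remains by the $\le k^d$ ordered tuples with all-distinct coordinates produces one such tuple with intersection at least $m^d/(2n^{d-1})$, whose coordinates can then be relabeled in increasing order to give the desired $j_1 < \cdots < j_d$.

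The argument is entirely elementary and I do not anticipate any real obstacle. Essentially the only thing to check is that the constant $2d$ in the hypothesis is large enough to absorb both the loss coming from approximating $\binom{km/n}{d}$ by $(km/n)^d/d!$ and the contribution of ordered tuples with repeated coordinates. Both checks reduce to routine estimates of binomial coefficients and falling factorials, and neither is conceptually delicate.
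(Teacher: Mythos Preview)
The paper does not prove this lemma; it merely states it and cites Jukna's textbook \cite[Lemma 2.3]{Jukna11}. Your double-counting argument via $\sum_a r_a^d$ (or equivalently $\sum_a \binom{r_a}{d}$) together with convexity is exactly the standard proof one finds in that reference, so the approach is correct and matches what the citation points to.

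One small caveat: your claim that the hypothesis $k \ge 2dn^d/m^d$ is ``precisely what is needed'' to make the repeated-coordinate contribution at most half of $(km)^d/n^{d-1}$ is not quite right. That inequality reads $\binom{d}{2}k^{d-1}n \le \tfrac12(km)^d/n^{d-1}$, which rearranges to $k \ge d(d-1)n^d/m^d$; since $d(d-1) > 2d$ once $d\ge 4$, the stated hypothesis is a bit too weak to recover the exact constant $\tfrac{m^d}{2n^{d-1}}$ for large $d$. This is harmless for the paper's purposes---in the application (Theorem~\ref{th:LocalProp}) the contrapositive is used only up to constants depending on $c$ and $d$, which are absorbed into the $\Omega(\cdot)$---but if you want the lemma exactly as stated you would need either a slightly sharper bookkeeping of the repeated-coordinate tuples or a mild adjustment of the constants.
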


\begin{proof}[Proof of Theorem \ref{th:LocalProp}.]
Let $\pts$ be a set of $n$ points such that every $c(d+1)$ points of $\pts$ span at least  $\binom{c(d+1)}{2} - dc + (d+1)$ distinct distances.
We say that a point $p\in \pts$ \emph{spans} a distance $\delta$ if there exists a point $q\in \pts$ such that $|pq|=\delta$.
We begin by studying some configurations that cannot occur in $\pts$.

\parag{Forbidden configurations.}
Assume that a point $p\in \pts$ is at a distance of $\delta$ from at least $dc-d+1$ points of $\pts$.
Let $\pts' \subset \pts$ consist of $p$, of $dc-d+1$ points of $\pts$ that are at a distance of $\delta$ from $p$, and of $d+c-2$ additional points of $\pts$.
Then $\pts'$ is a set of $c(d+1)$ points of $\pts$ that span at most $\binom{c(d+1)}{2} - dc + d$ distinct distances, contradicting the assumption on $\pts$.
This contradiction implies that for every $p\in \pts$ and distance $\delta$, at most $dc-d$ points of $\pts$ are at a distance of $\delta$ from $p$.
This in turn implies that every distance is spanned by at most $d(c-1)n/2$ pairs of $\pts^2$.

Let $\delta_1,\ldots,\delta_d$ be distinct distances and let $\pts' \subset \pts$ be a set of $c$ points, such that every $p\in \pts'$ spans each of the distances $\delta_1,\ldots,\delta_d$ with a point of $\pts\setminus \pts'$.
We construct a set $\pts''$ by going over every point $p\in \pts$ and distance $\delta_j$, and adding to $\pts''$ one point of $\pts\setminus \pts'$ that is at a distance of $\delta_j$ from $p$.
If a point $q \in \pts\setminus \pts'$ was added several times to $\pts''$, we consider $\pts''$ as containing one instance of $q$ (that is, $\pts''$ is not a multiset).
We get that $\pts' \cup \pts''$ is a set of at most $c(d+1)$ points, with at least $c$ pairs of points at distance $\delta_j$ for every $1\le j \le d$.
That is, the set $\pts' \cup \pts'' \subset \pts$ spans at most $\binom{c(d+1)}{2} - dc + d$ distinct distances.
This contradicts the assumption on $\pts$, and implies that $\pts$ cannot contain such a configuration.

Next, we slightly change the case studied in the previous paragraph: We assume that every point of $\pts'$ spans the distances $\delta_1,\ldots,\delta_d$ with points of $\pts$ (rather than of $\pts\setminus \pts'$) and that each of the distances $\delta_1,\ldots,\delta_d$ is spanned by at least $c$ pairs of points of $\pts$.
We construct the set $\pts'' \subset \pts$ as in the previous paragraph: For every $p\in \pts$ and distance $\delta_j$, if there exist points of $\pts\setminus \pts'$ at distance $\delta_j$ from $p$ then we add one such point to $\pts''$.
For $1 \le j \le d$, let $k_j$ denote the number of points of $\pts'$ that are not at distance $\delta_j$ from any point of $\pts\setminus \pts'$.
By the previous paragraph, the case where $k_1 = \cdots = k_d =0$ is forbidden.
Note that for every $1 \le j \le d$, at least $k_j/2$ pairs of points of $\pts'$ are at distance $\delta_j$.
That is, $\pts^* = \pts' \cup \pts''$ is a set of at most $(d+1)c-\sum_{j=1}^d k_j$ points of $\pts$ and at least $c-k_j/2$ pairs of points at distance $\delta_j$.
For $1 \le j \le d$, we add $\lfloor k_j/2 \rfloor$ additional pairs at distance $\delta_j$ by adding at most $k_j$ vertices of $\pts$ to $\pts^*$.
This is possible by the assumption that $\delta_j$ is spanned by at least $c$ pairs of points of $\pts$.
Since the resulting set $\pts^*$ contradicts the assumption about $\pts$, this configuration is also forbidden.

\parag{Rich distances.}
For an integer $j$, let $\Delta_j$ be the set of distances spanned by at least $j$ pairs of points of $\pts$, and set $k_j = |\Delta_j|$.
Let $\pts_\delta$ be the set of points of $\pts$ that span $\delta$.

Fix $j\ge d(2c^{d+1}n^{d-1})^{1/d}$ and let $\delta \in \Delta_j$.
Since every point spans $\delta$ with at most $dc-d$ points of $\pts$, we get that $|\pts_\delta|\ge \frac{2j}{dc-d} > \frac{j}{dc}$.
By the last forbidden configuration above, for any choice of $\delta_1, \ldots, \delta_d \in \Delta_j$ we have $|\pts_{\delta_1} \cap \cdots \cap \pts_{\delta_d}| <c$.
By the assumption on $j$ we have $c \le \frac{j^d}{2c^d d^d n^{d-1}}$.
We may thus apply the contrapositive of Lemma \ref{le:GeneralSetIntersection} on the sets $\pts_\delta$ with $\delta\in \Delta_j$ and with $m=\frac{j}{dc}$.
This implies
\begin{equation} \label{eq:RichDistUpper}
k_j < \frac{2dn^d}{(j/dc)^d} = \frac{2n^d d^{d+1}c^d}{j^d}.
\end{equation}

For $j< d(2c^{d+1}n^{d-1})^{1/d}$ (and also for larger values of $j$), we have the straightforward bound
\begin{equation} \label{eq:PoorDistUpper}
k_j < n^2/j.
\end{equation}

\parag{An energy argument.}
Recall the notation $\Delta$, $D$, and $m_\delta$ from Section \ref{sec:HigerEner}.
Since no distance is spanned by more than $d(c-1)n/2$ pairs of points of $\pts$, we have that $m_\delta \le d(c-1)n$ for every $\delta\in \Delta$.
Let $m = \lfloor \log d(2c^{d+1}n^{d-1})^{1/d} \rfloor$.
By dyadic pigeonholing together with \eqref{eq:RichDistUpper} and \eqref{eq:PoorDistUpper}, we obtain
\begin{align*}
E_d(\pts) = \sum_{\delta\in \Delta} m_\delta^d &= \sum_{j=0}^{\log (dcn)} \sum_{\delta\in \Delta \atop 2^{j} \le m_\delta <2^{j+1} } m_\delta^d < \sum_{j=0}^{\log (dcn)} k_{2^j} \left(2^{j+1}\right)^d \\
&= \sum_{j=0}^{m} k_{2^j} 2^{d(j+1)} + \sum_{j=m+1}^{\log (dcn)} k_{2^j} 2^{d(j+1)} \\[2mm]
&= O\left(n^{2+(d-1)^2/d}\right)+O\left(n^d \log n\right) = O\left(n^{2+(d-1)^2/d}\right).
\end{align*}

Recall from \eqref{eq:dthEnergyLower} that $E_{d}(\pts) = \Omega\left(\frac{n^{2d}}{D^{d-1}}\right)$.
Combining these two bounds on $E_{d}(\pts)$ yields the assertion of the theorem.
\end{proof}

One way to improve the proof of Theorem \ref{th:LocalProp} might be to derive an upper bound on $k_j$ stronger than the straightforward bound of \eqref{eq:PoorDistUpper} when $j\approx n^{(d-1)/d}$.

\section{Elekes--Ronyai bounds with additive structure} \label{sec:ElekRon}

In this section we prove Theorems \ref{th:StructureElekRon} and \ref{th:GeneralStructure}, and discuss some of their applications.
We begin by recalling the statement of Theorem \ref{th:StructureElekRon}.
\vspace{1mm}

\noindent {\bf Theorem \ref{th:StructureElekRon}.} \emph{Let $A,B \subset \RR$ be finite sets and let $f \in \RR[x,y]$ be a polynomial of degree at most $d$ that is not additively degenerate.
Then for any $\eps>0$ we have}
\[ |f(A,B)| = \Omega\left(\min\left\{\frac{|A|^{16/9-\eps} |B|^{16/9-\eps} }{|A-A||B-B|},|A|^2,|B|^2\right\}\right). \]

Theorem \ref{th:StructureElekRon} has many applications, and we now present a couple of those.
We begin with the following distinct distances result from \cite{SSS13}.

\begin{theorem} \label{th:DDtwoLines}
Let $\ell_1$ and $\ell_2$ be two lines in $\RR^2$ that are neither parallel nor orthogonal.
Let $\pts_1$ and $\pts_2$ be sets of points in $\RR^2$ of size $n$ and $m$ respectively, such that $\pts_1$ is contained in $\ell_1$, and $\pts_2$ is contained in $\ell_2$.
Then
\[ D(\pts_1,\pts_2) = \Omega\left(\min\left\{n^{2/3}m^{2/3}, n^2, m^2\right\}\right). \]
\end{theorem}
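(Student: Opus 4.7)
The plan is to reduce Theorem~\ref{th:DDtwoLines} directly to the Elekes--R\'onyai bound, Theorem~\ref{th:ElekesRonyai}, by a one-line parameterization. Writing $\ell_i$ as $r \mapsto q_i + r v_i$ with $v_i$ a unit vector, each $\pts_i$ corresponds to a set $T_i\subset \RR$ of size $|\pts_i|$, and the squared distance between the points with parameters $s\in T_1$ and $t\in T_2$ is a polynomial
\[ F(s,t) = s^2 + t^2 - 2(\cos\alpha)\, st + L(s,t) + c, \]
where $\alpha$ is the (nonzero, non-right) angle between $\ell_1,\ell_2$ and $L$ is linear. Since squaring is a bijection on distances, $D(\pts_1,\pts_2) = |F(T_1,T_2)|$, so it suffices to show that $F$ is not in either Elekes--R\'onyai degenerate form and then quote Theorem~\ref{th:ElekesRonyai}.

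The core step is thus the degeneracy check, which is a short degree argument driven by a single observation: the $st$-coefficient of $F$ equals $-2\cos\alpha\ne 0$ because $\ell_1\not\perp\ell_2$. For the additive case $F=h(g_1(s)+g_2(t))$, with $\deg h=c$ and $\deg g_i\in\{a,b\}$, we need $c\max(a,b)=2$. If $c=1$ then $F$ is a sum of a function of $s$ and a function of $t$, so it has no $st$ term, contradiction. If $c=2$ then $a=b=1$, and matching the $s^2,t^2,st$ coefficients forces $c_2\alpha_1^2=c_2\beta_1^2=1$ and $2c_2\alpha_1\beta_1=-2\cos\alpha$, which after squaring yields $\cos^2\alpha=1$, i.e.\ $\ell_1\parallel\ell_2$, contradicting the hypothesis.

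For the multiplicative case $F=h(g_1(s)\cdot g_2(t))$, the degree equations leave only $(c,a,b)\in\{(1,2,2),(2,1,1)\}$. In the first case $g_1g_2$ contains an $s^2t^2$ monomial with nonzero coefficient, but $F$ has no $s^2t^2$ term, so one of $g_1,g_2$ must actually be constant, collapsing $F$ into a polynomial in a single variable and killing the $st$ term. In the second case the $s^2t^2$ coefficient of $h(g_1g_2)$ is $c_2(\alpha_1\beta_1)^2$, which must vanish, so again $g_1$ or $g_2$ is constant and $F$ loses its $st$ term, a contradiction.

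Having eliminated both degenerate forms, Theorem~\ref{th:ElekesRonyai} applied to $F$ yields $|F(T_1,T_2)|=\Omega(\min\{n^{2/3}m^{2/3},n^2,m^2\})$, which is the desired bound on $D(\pts_1,\pts_2)$. I expect no genuine obstacle here --- the whole argument is a reduction --- but the one place to be careful is the coefficient chase in the multiplicative case, where one must rule out the $(2,1,1)$ branch as carefully as the additive $c=2$ branch; both ultimately rest on the single fact that $\cos\alpha\notin\{0,\pm 1\}$.
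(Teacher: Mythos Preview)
Your proposal is correct and matches the approach the paper describes. The paper does not itself prove Theorem~\ref{th:DDtwoLines} (it is quoted from \cite{SSS13}), but in the proof sketch of the subsequent Corollary it records that \cite{SSS13} reduces the problem to the expansion of the squared-distance polynomial $x^2-2xy+(1+s^2)y^2$; this is exactly your reduction in a different parameterization, after which you invoke Theorem~\ref{th:ElekesRonyai} as the expansion black box.
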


One reason for why the above distinct distances problem is considered interesting is that it has many generalizations (such as Theorem \ref{th:ElekesRonyai}).
 Improving the known bounds for the distances problem tends to lead to improvements for various generalizations.
Quoting Hilbert \cite{Reid70}: ``The art of doing mathematics is finding that special case that contains all the germs of generality.''
The following is an easy corollary of Theorems \ref{th:StructureElekRon} and \ref{th:GeneralStructure}.

\begin{corollary} $\quad$

(a) Let $\ell_1$ and $\ell_2$ be two lines in $\RR^2$ that are neither parallel nor orthogonal.
Let $\pts_1$ and $\pts_2$ be sets of points in $\RR^2$ of size $n$ and $m$ respectively, such that $\pts_1\subset \ell_1$ and $\pts_2 \subset \ell_2$.
Let $A$ be the set of $x$-coordinates of the points of $\pts_1$, and let $B$ be the set of $x$-coordinates of the points of $\pts_2$ (assuming the neither line is parallel to the $y$-axis).
Then
\[ D(\pts_1,\pts_2) = \Omega\left(\min\left\{\frac{m^{16/9-\eps} n^{16/9-\eps} }{|A-A||B-B|},m^2,n^2\right\}\right). \]

(b) Let $\tau_A,\tau_B \in \RR[x]$ be of degree at most $d$, let $\deg \tau_B \ge 2$, and let $\tau_A^{-1}$ and $\tau_B^{-1}$ be respective one-sided inverses.
Assume that $f(\tau_A(x),\tau_B(y))$ is indecomposable, that $A \subset \tau_A(\RR)$, and that $B \subset \tau_B(\RR)$.
Then for any $\eps>0$
\[ D(\pts_1,\pts_2) = \Omega\left(\min\left\{\frac{m^{16/9-\eps} n^{16/9-\eps} }{|\tau_A^{-1}(A)-\tau_A^{-1}(A)||\tau_B^{-1}(B)-\tau_B^{-1}(B)|},m^2,n^2\right\}\right). \]
\end{corollary}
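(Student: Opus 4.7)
The plan is to reduce both parts of the corollary to direct applications of Theorems \ref{th:StructureElekRon} and \ref{th:GeneralStructure}, by encoding the squared distance between points on $\ell_1$ and $\ell_2$ as a polynomial in their $x$-coordinates. Since neither line is assumed to be vertical (the corollary already restricts to this case via the parenthetical remark on the $y$-axis), write $\ell_i : y = \alpha_i x + \beta_i$ for $i = 1, 2$, so that $\pts_1 = \{(a, \alpha_1 a + \beta_1) : a \in A\}$ and $\pts_2 = \{(b, \alpha_2 b + \beta_2) : b \in B\}$. Set
\[ f(x, y) = (x - y)^2 + (\alpha_1 x - \alpha_2 y + \beta_1 - \beta_2)^2 \in \RR[x, y], \]
so that $f(a, b)$ is the squared Euclidean distance between the corresponding points. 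Because squaring is injective on $[0, \infty)$, we have $D(\pts_1, \pts_2) = |f(A, B)|$, and it suffices to lower-bound $|f(A, B)|$.

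For part (a), I would apply Theorem \ref{th:StructureElekRon} to $f$; this requires verifying that $f$ is not additively degenerate. Expanding, the degree-$2$ part of $f$ is $(1 + \alpha_1^2)\,x^2 - 2(1 + \alpha_1 \alpha_2)\,xy + (1 + \alpha_2^2)\,y^2$. If $f = h(Lx + My + N)$ for some univariate $h$ and linear form $Lx + My + N$, then $\deg h = 2$ (to match $\deg f = 2$; note $L, M, p \neq 0$ as the coefficients $1 + \alpha_i^2$ are strictly positive), and comparing the quadratic parts of $f$ and $p(Lx + My + N)^2$ (with $p$ the leading coefficient of $h$) yields $pL^2 = 1 + \alpha_1^2$, $pM^2 = 1 + \alpha_2^2$, and $2pLM = -2(1 + \alpha_1 \alpha_2)$. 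Eliminating $p, L, M$ forces $(1 + \alpha_1^2)(1 + \alpha_2^2) = (1 + \alpha_1 \alpha_2)^2$, which simplifies to $(\alpha_1 - \alpha_2)^2 = 0$, i.e., $\ell_1 \parallel \ell_2$, contradicting the hypothesis. Hence $f$ is not additively degenerate, and Theorem \ref{th:StructureElekRon} delivers the claimed bound.

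For part (b), the hypotheses of the corollary are exactly those of Theorem \ref{th:GeneralStructure}: $\tau_A, \tau_B \in \RR[x]$ of bounded degree with $\deg \tau_B \ge 2$, one-sided inverses $\tau_A^{-1}, \tau_B^{-1}$, the inclusions $A \subset \tau_A(\RR)$ and $B \subset \tau_B(\RR)$, and indecomposability of $f(\tau_A(x), \tau_B(y))$ for the $f$ above. Taking the degree parameter in Theorem \ref{th:GeneralStructure} to be $\max\{2, \deg \tau_A, \deg \tau_B\}$, that theorem applied to this $f$ immediately yields the asserted lower bound on $|f(A, B)| = D(\pts_1, \pts_2)$. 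The only genuine technical content is the additive-degeneracy check in part (a); part (b) is a direct reduction, and I do not anticipate any real obstacle. It is worth remarking that non-parallelism alone suffices for the argument in part (a), so the orthogonality exclusion inherited from Theorem \ref{th:DDtwoLines} is not strictly necessary for this corollary.
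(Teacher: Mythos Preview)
Your proof of part (a) is correct and follows the same reduction as the paper: encode the squared distance as a bivariate polynomial in the $x$-coordinates and apply Theorem~\ref{th:StructureElekRon}. The paper invokes the normalized polynomial $f(x,y)=x^2-2xy+(1+s^2)y^2$ from \cite{SSS13}, whereas you work with the raw squared-distance polynomial; your discriminant check $(1+\alpha_1^2)(1+\alpha_2^2)=(1+\alpha_1\alpha_2)^2 \Leftrightarrow \alpha_1=\alpha_2$ is a clean way to rule out additive degeneracy, and your side remark that orthogonality need not be excluded is a genuine (if minor) gain over the \cite{SSS13} normalization, where the orthogonal case degenerates.

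For part (b) your reduction to Theorem~\ref{th:GeneralStructure} is valid for the corollary \emph{as stated}, since indecomposability of $f(\tau_A(x),\tau_B(y))$ is listed among the hypotheses. Note, however, that the paper's proof does more: it actually \emph{verifies} that for the specific distance polynomial $f$, the composition $f(\tau_A(x),\tau_B(y))$ is indecomposable for \emph{every} choice of $\tau_A,\tau_B$ with $\deg\tau_B\ge 2$ (via a leading-term analysis forcing $\deg h=2$ and then reaching a contradiction from $s\neq 0$). This is the substantive content behind the corollary---it shows the indecomposability hypothesis is never vacuous in the two-lines setting, which is exactly what makes (b) a meaningful strengthening rather than a tautology. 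Your write-up treats that hypothesis as a black box, so while nothing you wrote is wrong, you have skipped the one nontrivial verification the paper carries out in part (b).
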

\begin{proof}[Proof sketch.]
In \cite{SSS13} the distinct distances problem is reduced to showing that the polynomial $f(x,y) = x^2-2xy+(1+s^2)y^2$ expands (where $s\in \RR\setminus\{0\}$ depends on the angle between $\ell_1$ and $\ell_2$).
It can be easily verified that $f(x,y)$ is indecomposable, so we can apply Theorem \ref{th:StructureElekRon} with it.
Plugging the resulting expansion bound in the analysis of \cite{SSS13} immediately leads to the bound of part (a).

For $\tau_A$ and $\tau_B$ of respective degrees $d$ and $d'\ge 2$, assume for contradiction that $f(\tau_A(x),\tau_B(y))$ is decomposable. 
That is, there exist $g\in \RR[x,y]$ and $h\in \RR[z]$ of degree at least two such that $f(\tau_A(x),\tau_B(y)) = h(g(x,y))$.
Note that $f(\tau_A(x),\tau_B(y))$ contains terms of the form $(c \cdot x)^{2d}$ and $(1+s^2) (c'\cdot y)^{2d'}$ where $c,c' \in \RR\setminus\{0\}$.
In every other term of $f(\tau_A(x),\tau_B(y))$ the degree of $x$ is smaller than $2d$ and the degree of $y$ is smaller than $2d'$.
This implies that $2d = \deg h \cdot \deg_x g$, and that $2d'= \deg h \cdot \deg_y g$.

By the above, $f(\tau_A(x),\tau_B(y))$ should also contain a term where $x$ is of degree $2d \cdot \frac{\deg h -1}{\deg h}$ and $y$ is of degree at least one.
By the definition of $f(x,y)$, we note that $f(\tau_A(x),\tau_B(y))$ does not contain terms with an exponent of $x$ larger than $d$ that also involve $y$.
We thus get that $\deg h = 2$. 
Let $c_2$ be the coefficient of $z^2$ in $h\in \RR[z]$.
Recalling the terms $(c \cdot x)^{2d}$ and $(1+s^2) (c'\cdot y)^{2d'}$, we get that $g(x,y)$ contains the terms $c_2^{-1/2}(c \cdot x)^{d}$ and $c_2^{-1/2}(1+s^2)^{1/2}(c' \cdot y)^{d'}$.
This in turn implies that $f(\tau_A(x),\tau_B(y))$ should also contain the term $2(c \cdot x)^{d} (1+s^2)^{1/2}(c' \cdot y)^{d'}$.
From the definition of $f(x,y)$ we note that $f(\tau_A(x),\tau_B(y))$ contains the slightly different term $2(c \cdot x)^{d} (c' \cdot y)^{d'}$.
Since $s\neq 0$ we get a contradiction, so $f(\tau_A(x),\tau_B(y))$ must be indecomposable.
Applying Theorem \ref{th:GeneralStructure} with $f(\tau_A(x),\tau_B(y))$ immediately leads to part (b).
\end{proof}

For example, when $|A-A|=\Theta(n)$ and $|B-B|=\Theta(m)$ we have $D(\pts_1,\pts_2) = \Omega\left(\min\left\{m^{7/9-\eps} n^{7/9-\eps},m^2,n^2\right\}\right)$.
One approach to improving Theorem \ref{th:DDtwoLines} is to improve the case where $A$ and $B$ have no additive or polynomial structure.

We next consider a result of Shen \cite{Shen12}.

\begin{theorem} \label{th:Shen}
Let $f\in \RR[x,y]$ be a polynomial of a constant-degree that is not additively degenerate, and let $A\subset \RR$ be finite.
Then
\[ |A + A| + |f(A, A)| = \Omega\left(|A|^{5/4}\right). \]
\end{theorem}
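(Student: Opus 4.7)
The plan is a two-stage approach: first try the natural black-box combination of Pl\"unnecke-Ruzsa with Theorem \ref{th:StructureElekRon}, and when that falls short, fall back on a direct incidence argument in the spirit of Shen.

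For the first stage, one may assume $|A+A| < |A|^{5/4}$, for otherwise the bound is immediate. Under this assumption Pl\"unnecke-Ruzsa gives $|A-A| \le |A+A|^2/|A| < |A|^{3/2}$, and Theorem \ref{th:StructureElekRon} applied with $B=A$ yields $|f(A,A)| = \Omega(|A|^{32/9-\eps}/|A-A|^2)$. Writing $|A+A| = |A|^\alpha$, balancing $|A|^\alpha$ against the resulting lower bound $|A|^{50/9 - 4\alpha - \eps}$ on $|f(A,A)|$ produces only $|A+A| + |f(A,A)| = \Omega(|A|^{10/9-\eps})$, which falls short of the target exponent $5/4$. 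So the black-box route is insufficient on its own.

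For the second stage, set $M = |A+A|$, $N = |f(A,A)|$, let $P = (A+A) \times f(A,A) \subset \RR^2$, and for each $a' \in A$ introduce the algebraic curve $C_{a'} = \{(x+a',\, f(x,a')) : x \in \RR\}$ of degree $O_d(1)$. Each $C_{a'}$ contains the $|A|$ points $(a+a',\, f(a,a'))$ for $a \in A$, so the curves collectively meet $P$ in at least $|A|^2$ incidences. Because $f$ is not additively degenerate, the condition that a curve $C_{a'}$ passes through two prescribed points of $\RR^2$ cuts out an algebraic system with only $O_d(1)$ solutions in $a'$, so the incidence graph is $K_{2,t}$-free for a constant $t = t(d)$, and Theorem \ref{th:PachSharir} applies.

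The main obstacle is squeezing the exponent $5/4$ out of this incidence setup. A crude use of Pach-Sharir gives $|A|^2 = O((MN)^{2/3}|A|^{2/3} + MN + |A|)$, hence $MN = \Omega(|A|^2)$ and $M+N = \Omega(|A|)$, which is far too weak. The key is to count incidences with the multiplicities inherited from the representation function $r_{A+A}$, which upgrades the left-hand side from $|A|^2$ to a quantity comparable to the additive energy $E(A) \ge |A|^4/M$, and then to pass back to an unweighted bound via Cauchy-Schwarz. Executing this reweighting step---the one place where the small-doubling hypothesis $M < |A|^{5/4}$ genuinely bites---and balancing the resulting inequality in $M$ and $N$ is the delicate part of the argument.
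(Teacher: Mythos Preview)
First, a framing point: the paper does not prove Theorem~\ref{th:Shen}. It is quoted as a known result of Shen \cite{Shen12} and then compared with Theorem~\ref{th:StructureElekRon}. So there is no ``paper's own proof'' to match; you are effectively trying to reproduce Shen's argument.

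Your second-stage incidence setup is close to Shen's but uses too few curves. Shen takes the point set $\pts=(A+A)\times f(A,A)$ and curves indexed by \emph{pairs}:
\[
C_{b,c}=\{(x+b,\,f(x,c)) : x\in\RR\},\qquad (b,c)\in A\times A.
\]
That gives $|A|^2$ curves and, since each contains the $|A|$ points $(a+b,f(a,c))$ for $a\in A$, at least $|A|^3$ incidences. The hypothesis that $f$ is not additively degenerate is exactly what forces the incidence graph to be $K_{2,t}$-free for $t=O_d(1)$ (if $f=g(x+\alpha y)$ then two points only pin down $b-\alpha c$, leaving a one-parameter family of curves). Theorem~\ref{th:PachSharir} then yields $|A|^3=O\big((MN)^{2/3}|A|^{4/3}+MN+|A|^2\big)$, hence $MN=\Omega(|A|^{5/2})$ and $M+N=\Omega(|A|^{5/4})$. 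No weighting is needed.

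Your diagonal family $C_{a'}=\{(x+a',f(x,a'))\}$ has only $|A|$ curves, and your proposed rescue via $r_{A+A}$-weights and Cauchy--Schwarz does not recover the loss. Carrying it out: the weighted count is $\sum_{a,a'} r_{A+A}(a+a')=E(A)\ge |A|^4/M$, while Cauchy--Schwarz gives
\[
E(A)\le\Big(\sum_{p\in\pts} r_{A+A}(p_1)^2\Big)^{1/2}\Big(\sum_{p\in\pts} d(p)^2\Big)^{1/2}
= (N\cdot E(A))^{1/2}\cdot O\big((MN)^{2/3}|A|^{2/3}+|A|^2\big)^{1/2},
\]
using the $K_{2,t}$ property to bound $\sum_p d(p)^2$. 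Simplifying gives only $MN=\Omega(|A|^2)$, i.e.\ $M+N=\Omega(|A|)$ --- no better than the unweighted bound you already dismissed. The ``delicate part'' you defer is in fact where the argument breaks; the missing idea is the second parameter in the curve family, not a reweighting of the points.
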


Theorem \ref{th:StructureElekRon} implies a better lower bound for $|f(A,A)|$ when $|A+A|=O(|A|^{83/72-\eps})$.\footnote{In Theorem \ref{th:StructureElekRon} it is easy to replace $A-A$ with $A+A$: In the proof we take $\alpha \in A+A$, replace $x+\alpha$ with $\alpha-x$, and handle $\beta$ in a symmetric manner. The rest of the proof remains the same. }
Thus, to improve the bound of Theorem \ref{th:Shen} it remains to handle the case where $|A+A|=\Omega(|A|^{83/72})$ and $|A+A|=O(|A|^{5/4})$.

Theorems \ref{th:StructureElekRon} and \ref{th:GeneralStructure} have many additional applications.
See for example the applications presented in \cite{RSS16}.

\begin{proof}[Proof of Theorem \ref{th:StructureElekRon}.]
If $f$ is decomposable, then there exist a univariate $f_1$ of degree at least two and $f_2\in \RR[x,y]$ such that $f(x,y)=f_1(f_2(x,y))$.
Let $(f_1,f_2)$ be a pair of such polynomials that minimizes the degree of $f_2$.
In particular, this implies that $f_2$ is indecomposable.
Since $f$ is of degree at most $d$, so are $f_1$ and $f_2$.
Since $f_1$ is univariate, for every $a\in \RR$ there exist at most $d$ numbers $b\in \RR$ such that $f_1(b)=a$.
Thus, if $|f_2(A,B)|\ge x$ for some $x$, then $|f(A,B)|\ge x/d$.
It then remains to derive the bound of the theorem to the indecomposable $f_2$.
Since $f$ is assumed not to be additively degenerate, so is $f_2$.
With an abuse of notation, we will refer to $f_2$ as $f$.
We may thus assume that $f$ is indecomposable.

Let $\Delta = \{f(a,b) :\ a\in A, b\in B\}$, and let $D = |\Delta|$.
For $\delta \in \Delta$, let
\[ m_\delta = |\{(a,b) \in A \times B :\ f(a,b)=\delta\}|. \]
In other words, $m_\delta$ is the number of representations of $\delta$ as $f(a,b)$.
For an integer $j \ge 1$, let $\Delta_j$ be the set of $\delta\in \Delta$ that satisfy $m_\delta \ge j$, and let $k_j=|\Delta_{j}|$.

Imitating the concept of bipartite distance energy, we define the energy
\[ E_{f}(A,B) = \left|\left\{(a_1,a_2,b_1,b_2) \in A^2 \times B^2 :\ f(a_1,b_1) = f(a_2,b_2)\right\}\right|.\]
Note that $m_\delta = |\vb(f-\delta)\cap (A\times B)|$.
By Lemma \ref{le:SchZipp}, every $\delta\in \Delta$ satisfies $m_\delta = O(|A|+|B|)$.
That is, there exists a constant $\mu$ such that $k_{\mu(|A|+|B|)} =0$.
We thus have
\begin{align} \label{eq:EfUpper}
E_{f}(A,B) = \sum_{\delta\in \Delta}m_{\delta}^2 = \sum_{j=0}^{\log \mu(|A|+|B|) } \sum_{\delta \in \Delta \atop 2^{j} \leq m_\delta < 2^{j+1}}{m_{\delta}^2} < \sum_{j=0}^{\log \mu(|A|+|B|)} 2^{2j+2} k_{2^j}.
\end{align}

Since every $(a,b)\in A\times B$ contributes to one $m_\delta$, we have that $\sum_{\delta\in \Delta} m_\delta = |A||B|$.
By the Cauchy-Schwarz inequality, we obtain
\begin{align} \label{eq:EfLower}
E_{f}(A,B) = \sum_{\delta\in \Delta}m_{\delta}^2 \ge \frac{(\sum_{\delta\in \Delta} m_\delta)^2}{D} = \frac{|A|^2|B|^2}{D}.
\end{align}

\parag{A set of curves.}
For a fixed $j\ge 2d$, consider the set of curves
\[ \curves_{j} =  \left\{\vb(f(x+\alpha,y+\beta) - \delta)\  :\ \alpha \in A-A,\ \beta \in B-B,\ \delta \in D_{j} \right\}. \]

Assume for contradiction that  there exist $(\alpha,\beta,\delta), (\alpha',\beta',\delta') \in (A-A) \times (B-B) \times D_{j}$ and $r\in \RR$ such that
\[ f(x+\alpha,y+\beta) -\delta = r\cdot \left(f(x+\alpha',y+\beta') - \delta'\right).\]
For the maximum degree terms on both sides to have the same coefficients, we must have $r=1$.
We replace $x$ with $x-\alpha'$ and $y$ with $y-\beta'$ (that is, we translate the plane by $-\alpha'$ in the $x$-direction and by $-\beta'$ in the $y$-direction). Setting $\alpha_{0}=\alpha-\alpha'$ and $\beta_{0}=\beta-\beta'$ leads to
\begin{equation} \label{eq:IdenticalCurves}
f(x+\alpha_{0},y+\beta_{0}) = f(x,y) - \delta' + \delta.
\end{equation}

One obvious solution to \eqref{eq:IdenticalCurves} is $\alpha_{0}=\beta_{0}=0$ and $\delta=\delta'$.
We now assume that this is not the case.
Without loss of generality we assume that $\beta_0 \neq 0$ and let $k$ be the degree of $x$ in $f$ (otherwise, $\alpha_0 \neq 0$ and we take $k$ to be the degree of $y$).
We claim that in this case $f(x,y)=h(x+cy)$ where $c=-\alpha_{0}/\beta_{0}$ and $h\in \RR[z]$, and prove this by induction on $k$.
For the induction basis, consider the case of $k=0$.
Since $f$ does not depend on $x$ and is of degree at least two, we get from \eqref{eq:IdenticalCurves} that $f$ is a constant function.
The claim follows by taking $h$ to be the same constant function.

For the induction step, consider $k\ge 1$.
For $0\le \ell \le k$, let $f_\ell \in \RR[y]$ be the coefficient of $x^\ell$ in $f$.
That is,
\[ f(x,y)= \sum_{\ell=0}^k x^\ell f_\ell(y). \]

From \eqref{eq:IdenticalCurves} and the assumption $\beta_0\neq 0$, we have that $f_{k}(y)$ is constant.
We set $g(x,y)= f(x,y)-(x+cy)^{k} \cdot f_{k}(y)$, and note that $g(x,y)$ is of degree $k-1$ in $x$.
By \eqref{eq:IdenticalCurves} and the definition of $c$, we have that
\begin{align*}
g(x+\alpha_{0},y+\beta_{0}) &= f(x+\alpha_{0},y+\beta_{0}) - (x+\alpha_{0}+c(y+\beta_{0}))^{k} \cdot f_{k}(y) \\
&= f(x,y) - \delta' + \delta - (x+cy)^k \cdot f_{k}(y) = g(x,y) - \delta' + \delta.
\end{align*}
We may thus apply the induction hypothesis on $g$, and obtain that $g = h'(x+cy)$ for some $h'\in \RR[z]$.
Since $g(x,y)= f(x,y)-(x+cy)^{k} \cdot f_{k}(y)$ and $f_k(y)$ is constant, we conclude that $f=h(x+cy)$ for some $h\in \RR[z]$.
This implies that $f$ is additively degenerate also before the aforementioned translation of $\RR^2$.

To recap, we proved that either $\alpha_{0}=\beta_{0}=0$ and $\delta=\delta'$, or $f$ is additively degenerate.
Since the latter contradicts the assumption, we are in the former case.
This in turn implies that $\Gamma_{j}$ is a set of $|A-A| |B-B|k_j$ curves that are defined by distinct polynomials.
Moreover, no polynomial is a constant multiple of another.
While the polynomials are distinct, the curves of $\Gamma_{j}$ may still have common components.

\parag{Studying rich $\delta$'s.}
Since $f$ is indecomposable, Theorem \ref{th:Stein} implies that there are at most $d$ values of $\delta \in \Delta$ such that $f(x,y) -\delta$ is reducible.
For any $\alpha,\beta\in \RR$, since $\vb(f(x+\alpha,y+\beta)-\delta)$ is a translation of $\vb(f(x,y)-\delta)$, either both curves are reducible or both are irreducible.
Thus, at most $d|A-A||B-B|$ curves of $\curves_j$ are reducible.

If $k_j \ge 2d$, then after removing the reducible curves from $\curves_j$ we still have $|\curves_j| =\Theta(|A-A||B-B|k_j)$.
Assume that we are in this case, and let $\pts = A \times B \subset \RR^{2}$.
By applying Theorem \ref{th:SharirZahl} with $s=3$, we obtain
\begin{align}
I(\pts,\curves_j) = O&\Big( |A|^{6/11+\eps'}|B|^{6/11+\eps'}|A-A|^{9/11}|B-B|^{9/11}k_{j}^{9/11} \nonumber \\
&+ |A|^{2/3}|B|^{2/3}k_{j}^{2/3}|A-A|^{2/3}|B-B|^{2/3}+k_{j}|A-A||B-B| + |A||B| \Big). \label{eq:IncShenUpper}
\end{align}

Since $k_j < |A||B|$, $|A-A|> |A|$, $|B-B|> |B|$, $|A-A| < |A|^2$, and $|B-B|<|B|^2$, we get that the bound on the right-hand side of \eqref{eq:IncShenUpper} is always dominated by its first term.
That is, we have
\begin{align} \label{eq:IncShenUpperSimplified}
I(\pts,\curves_j) = O\left( |A|^{6/11+\eps'}|B|^{6/11+\eps'}|A-A|^{9/11}|B-B|^{9/11}k_{j}^{9/11} \right).
\end{align}

For every $\delta \in D_{j}$, there are at least $j$ pairs $(a,b) \in A \times B$ that satisfy $f(a,b)=\delta$.
Fix such a pair $(a,b)$.
Then for every $a' \in A$ and $b' \in B$, there exist $\alpha\in A-A$ and $\beta \in B-B$ such that $f(a'+\alpha,b'+\beta)=f(a,b)=\delta$.
Thus, every $\delta \in D_j$ has at least $j|A||B|$ corresponding incidences in $\pts\times \curves_j$.
By summing this over every $\delta \in D_j$ with irreducible curves we get that $I(\pts,\curves_j) =\Omega( j|A||B|k_{j})$.
Combining this with \eqref{eq:IncShenUpperSimplified} gives
\[ j|A||B|k_{j} = O\left( |A|^{6/11+\eps'}|B|^{6/11+\eps'}|A-A|^{9/11}|B-B|^{9/11}k_{j}^{9/11} \right). \]
Rearranging leads to
\[ k_{j} = O\left( \frac{|A-A|^{9/2}|B-B|^{9/2} }{|A|^{5/2-11\eps'/2}|B|^{5/2-11\eps'/2}j^{11/2}}\right). \]

Recall that the above analysis holds only when $k_j \ge 2d$.
Let $J$ be the set of integers $j\ge 1$ that satisfy $1\le k_j <2d$.
Since every $\delta\in \Delta_j$ has at least $j$ corresponding distinct pairs in $A\times B$, we have the straightforward bound
\[ k_j = O(|A||B|/j). \]

We set $\eps = 11\eps'/9$ and $\gamma = \frac{|A-A||B-B|}{|A|^{7/9-\eps}|B|^{7/9-\eps}}$.
Combining \eqref{eq:EfUpper} with both of the above upper bounds for $k_j$ implies
\begin{align*}
E_{f}(A,B) &< \sum_{j=0}^{\log \mu(|A|+|B|)} 2^{2j+2} k_{2^j} \le 4\sum_{j=0}^{\log \gamma} 2^{2j} k_{2^j}  + 4\sum_{\log \gamma +1}^{\log \mu(|A|+|B|)} 2^{2j} k_{2^j}\\
&= O\left(\sum_{j=0}^{\log \gamma} |A||B|2^{j}  + \sum_{j=\log \gamma+1}^{\log \mu(|A|+|B|)} \frac{|A-A|^{9/2}|B-B|^{9/2} }{|A|^{5/2-11\eps'/2}|B|^{5/2-11\eps'/2}2^{7j/2}} + \sum_{j \in J}2^{2j} \cdot 2d\right)  \\
&= O\left(|A|^{2/9+\eps}|B|^{2/9+\eps}|A-A||B-B| +|A|^2+|B|^2\right).
\end{align*}

Combining this with \eqref{eq:EfLower} immediately implies the assertion of the theorem.
\end{proof}

We next recall the statement of Theorem \ref{th:GeneralStructure}.
\vspace{1mm}

\noindent {\bf Theorem \ref{th:GeneralStructure}.}
\emph{Let $A,B \subset \RR$ be finite sets and let $f \in \RR[x,y]$ be of degree at most $d$.
Let $\tau_A,\tau_B \in \RR[x]$ be of degree at most $d$, let $\deg \tau_B \ge 2$, and let $\tau_A^{-1}$ and $\tau_B^{-1}$ be respective one-sided inverses.
Assume that $f(\tau_A(x),\tau_B(y))$ is indecomposable, that $A \subset \tau_A(\RR)$, and that $B \subset \tau_B(\RR)$.
Then for any $\eps>0$ we have}
\[ |f(A,B)| = \Omega\left(\min\left\{\frac{|A|^{16/9-\eps} |B|^{16/9-\eps} }{|\tau^{-1}_A(A)-\tau^{-1}_A(A)||\tau^{-1}_B(B)-\tau^{-1}_B(B)|},|A|^2,|B|^2\right\}\right). \]
\vspace{1mm}

Unfortunately it is possible for $f(x,y)$ to be indecomposable and for $f(\tau_A(x),\tau_B(y))$ to be decomposable.
For example, $f(x,y)=xy$ is clearly indecomposable.
However, setting $\tau_A(x)=\tau_B(x)=x^2$ leads to $f(\tau_A(x),\tau_B(y)) = x^2y^2$ which is decomposable.
Characterizing exactly when this happens would lead to a stronger variant of Theorem \ref{th:GeneralStructure}.

\begin{proof}[Proof of Theorem \ref{th:GeneralStructure}.]
We use a variant of the proof of Theorem \ref{th:StructureElekRon}.
In particular, we define $\Delta, \mu, D, D_j, k_j, E_{f}(A,B)$, and $m_\delta$ as in the proof of Theorem \ref{th:StructureElekRon}.
Recall that every $\delta\in \Delta$ satisfies $m_\delta = O(|A|+|B|)$.
That is, there exists a constant $\mu$ such that $k_{\mu(|A|+|B|)} =0$.
As in \eqref{eq:EfUpper}, we have
\begin{align} \label{eq:EfUpperGen}
E_{f}(A,B) = \sum_{\delta\in \Delta}m_{\delta}^2 = \sum_{j=0}^{\log \mu(|A|+|B|) } \sum_{\delta \in \Delta \atop 2^{j} \leq m_\delta < 2^{j+1}}{m_{\delta}^2} < \sum_{j=0}^{\log \mu(|A|+|B|)} 2^{2j+2} k_{2^j}.
\end{align}
As in \eqref{eq:EfLower}, we have
\begin{align} \label{eq:EfLowerGen}
E_{f}(A,B) = \sum_{\delta\in \Delta}m_{\delta}^2 \ge \frac{(\sum_{\delta\in \Delta} m_\delta)^2}{D} = \frac{|A|^2|B|^2}{D}.
\end{align}

For brevity, we write $A_\tau=\tau_A^{-1}(A)-\tau_A^{-1}(A)$ and $B_\tau=\tau_B^{-1}(B)-\tau_B^{-1}(B)$.
For a fixed $j$, consider the set of curves
\begin{align*}
\curves_{j} =  \Big\{\vb(f(\tau_A(x+\alpha),\tau_B(y+\beta)) - \delta)\  :\  \alpha \in A_\tau,\ \beta \in B_\tau,\ \delta \in D_{j} \Big\}.
\end{align*}

Assume for contradiction that there exist $(\alpha,\beta,\delta), (\alpha',\beta',\delta') \in A_\tau \times B_\tau \times D_{j}$ and $r\in \RR$ such that
\[ f(\tau_A(x+\alpha),\tau_B(y+\beta)) -\delta = r \cdot \left(f(\tau_A(x+\alpha'),\tau_B(y+\beta')) - \delta'\right).\]
For the maximum degree terms on both sides to have the same coefficients, we must have $r=1$.
We replace $x$ with $x-\alpha'$ and $y$ with $y-\beta'$ (that is, we translate the plane by $-\alpha'$ in the $x$-direction and by $-\beta'$ in the $y$-direction). Setting $\alpha_{0}=\alpha-\alpha'$ and $\beta_{0}=\beta-\beta'$ leads to
\begin{equation} \label{eq:IdenticalCurvesPartB}
f(\tau_A(x+\alpha_{0}),\tau_B(y+\beta_{0})) = f(\tau_A(x),\tau_B(y)) - \delta' + \delta.
\end{equation}

One obvious solution to \eqref{eq:IdenticalCurvesPartB} is $\alpha_{0}=\beta_{0}=0$ and $\delta=\delta'$.
We assume for contradiction that this is not the case.
As explained in the proof of Theorem \ref{th:StructureElekRon}, this implies that $f(\tau_A(x),\tau_B(y))=h(x+cy)$ where $c=-\alpha_{0}/\beta_{0}$ and $h\in \RR[z]$.
Since $\deg \tau_B\ge 2$, we also have that $\deg h\ge 2$.
This in turn implies that $f(\tau_A(x),\tau_B(y))$ is decomposable, contradicting the assumption.

To recap, we proved that $\curves_{j}$ is a set of $|A_\tau| |B_\tau|k_j$ curves that are defined by distinct polynomials.
Moreover, no polynomial is a constant multiple of another.
While the polynomials are distinct, the curves of $\curves_{j}$ may still have common components.

By Theorem \ref{th:Stein}, there are at most $d^2$ elements $\delta \in \Delta$ for which $f(\tau_A(x),\tau_B(y)) -\delta$ is reducible.
For any $\alpha,\beta\in \RR$, since $\vb(f(\tau_A(x+\alpha),\tau_B(y+\beta)))$ is a translation of $\vb(f(\tau_A(x),\tau_B(y))-\delta)$, either both curves are reducible or both are irreducible.
Thus, at most $d|A-A||B-B|$ curves of $\curves_j$ are reducible.

If $k_j \ge 2d^2$, then after removing the reducible curves from $\curves_j$ we still have $|\curves_j| =\Theta(|A_\tau||B_\tau|k_j)$.
Assume that we are in this case, and let $\pts = \tau_A^{-1}(A) \times \tau_B^{-1}(B) \subset \RR^{2}$.
Note that $\tau_A^{-1}$ and $\tau_B^{-1}$ are injective, so $\pts = |A||B|$.
By repeating the above derivation of \eqref{eq:IncShenUpperSimplified}, we obtain
\begin{align} \label{eq:IncShenUpperSimplifiedPartB}
I(\pts,\curves_j) = O\left( |A|^{6/11+\eps'}|B|^{6/11+\eps'}|A_\tau|^{9/11}|B_\tau|^{9/11}k_{j}^{9/11} \right).
\end{align}

For every $\delta \in D_{j}$, there are at least $j$ pairs $(a,b) \in A \times B$ that satisfy $f(a,b)=\delta$.
For every $a' \in \tau_A^{-1}(A)$ and $b' \in \tau_B^{-1}(B)$, there exist $\alpha\in A_\tau$ and $\beta \in B_\tau$ such that $f(\tau_A(a'+\alpha),\tau_B(b'+\beta))=f(a,b)=\delta$.
Thus, every $\delta \in D_j$ whose curves are irreducible has at least $j|A||B|$ corresponding incidences in $\pts\times \curves_j$.
By summing this over every $\delta \in D_j$ with irreducible curves we get $I(\pts,\curves_j) =\Omega(j|A||B|k_j)$.
Combining this with \eqref{eq:IncShenUpperSimplifiedPartB} gives
\[ j|A||B|k_{j} = O\left( |A|^{6/11+\eps'}|B|^{6/11+\eps'}|A_\tau|^{9/11}|B_\tau|^{9/11}k_{j}^{9/11} \right). \]
Rearranging leads to
\[ k_{j} = O\left( \frac{|A_\tau|^{9/2}|B_\tau|^{9/2} }{|A|^{5/2-11\eps'/2}|B|^{5/2-11\eps'/2}j^{11/2}}\right). \]
We also recall the straightforward bound
\[ k_j = O(|A||B|/j). \]

Let $J$ be the set of integers $j\ge 1$ that satisfy $1\le k_j <2d^2$.
We set $\eps = 11\eps'/9$ and $\gamma = \frac{|A_\tau||B_\tau|}{|A|^{7/9-\eps}|B|^{7/9-\eps}}$.
Combining \eqref{eq:EfUpperGen} with both of the above bounds for $k_j$ implies
\begin{align*}
E_{f}(A,B) &< \sum_{j=0}^{\log \mu(|A|+|B|)} 2^{2j+2} k_{2^j} \le 4\sum_{j=0}^{\log \gamma} 2^{2j} k_{2^j}  + 4\sum_{\log \gamma +1}^{\log \mu(|A|+|B|)} 2^{2j} k_{2^j}.\\
&= O\left(\sum_{j=0}^{\log \gamma} |A||B|2^{j}  + \sum_{j=\log \gamma+1}^{\log \mu(|A|+|B|)} \frac{|A_\tau|^{9/2}|B_\tau|^{9/2} }{|A|^{5/2-11\eps'/2}|B|^{5/2-11\eps'/2}2^{7j/2}} + \sum_{j \in J}2^{2j} \cdot 2d^2\right)  \\
&= O\left(|A|^{2/9+\eps}|B|^{2/9+\eps}|A_\tau||B_\tau| +|A|^2+|B|^2\right).
\end{align*}
Combining this with \eqref{eq:EfLowerGen} completes the proof.
\end{proof}

\ignore{ 

Finally, we move to the case of a multiplicative structure. We first recall the statement of Theorem \ref{th:MultStructure}.
\vspace{1mm}

\noindent {\bf Theorem \ref{th:MultStructure}.}
\emph{In Theorem \ref{th:StructureElekRon} we can replace the minus signs in $A-A$ and $B-B$ with addition, multiplication, or division.
In the cases of multiplication and division, $f$ is required not to be multiplicatively degenerate rather than not additively degenerate.}
\begin{proof}
Replacing the minus sign with a plus is straightforward:
In the proof of Theorem \ref{th:StructureElekRon} we take $\alpha \in A+A$, replace $x+\alpha$ with $\alpha-x$, and handle $\beta$ in a symmetric manner. The rest of the proof remains the same.

We now move to the case of replacing the minus sign by a quotient.
If $f$ is decomposable then there exist a univariate $f_1$ of degree at least two and $f_2\in \RR[x,y]$ such that $f(x,y)=f_1(f_2(x,y))$.
Let $(f_1,f_2)$ be a pair of such polynomials that minimizes the degree of $f_2$.
In particular, this implies that $f_2$ is indecomposable.
As in the proof of Theorem \ref{th:StructureElekRon}, it suffices to derive the bound of the theorem to the indecomposable $f_2$.
Since $f$ is assumed not to be additively degenerate, so is $f_2$.
With an abuse of notation, we will refer to $f_2$ as $f$.
We may thus assume that $f$ is indecomposable.

If $f$ is decomposable, then there exist a univariate $f_1$ of degree at least two and $f_2\in \RR[x,y]$ such that $f(x,y)=f_1(f_2(x,y))$.
Let $(f_1,f_2)$ be a pair of such polynomials that minimizes the degree of $f_2$.
In particular, this implies that $f_2$ is indecomposable.
Since $f$ is of degree at most $d$, so are $f_1$ and $f_2$.
Since $f_1$ is univariate, for every $a\in \RR$ there exist at most $d$ numbers $b\in \RR$ such that $f_1(b)=a$.
Thus, if $|f_2(A,B)|\ge x$ for some $x$, then $|f(A,B)|\ge x/d$.
It then remains to derive the bound of the theorem to the indecomposable $f_2$.
Since $f$ is assumed not to be additively degenerate, so is $f_2$.
With an abuse of notation, we will refer to $f_2$ as $f$.
We may thus assume that $f$ is indecomposable.

???
we replace $x+\alpha$ with $x\cdot \alpha$ and $y+\beta$ with $y \cdot \beta$.
That is, for a fixed $j$ we consider the set of curves
\[ \curves_{j} =  \left\{\vb(f(x\cdot \alpha,y\cdot \beta) - \delta)\  :\ \alpha \in A/A,\ \beta \in B/B,\ \delta \in D_{j} \right\}. \]

Assume for contradiction that  there exist $(\alpha,\beta,\delta), (\alpha',\beta',\delta') \in (A/A) \times (B/B) \times D_{j}$ and $r\in \RR$ such that
\[ f(x \cdot \alpha,y \cdot \beta) + \delta = r\cdot (f(x \cdot \alpha',y\cdot \beta') + \delta'). \]

We scale the $x$ axis by $1/\alpha'$ and the $y$-axis by $1/\beta'$.
Setting $\alpha_0 = \alpha/\alpha'$ and $\beta_0=\beta/\beta'$, we get
\begin{equation} \label{eq:multCond}
f(x \cdot \alpha_{0},y \cdot \beta_{0}) + \delta' = r\cdot (f(x,y) + \delta).
\end{equation}

We have the obvious solution $\alpha_0= \beta_0 = r =1$ and $\delta=\delta'$.
We now assume that this is not the case.
Let $cx^{j_x}y^{j_y}$ be a nonconstant term of $f$ (where $c\in \RR\setminus \{0\}$), and note that \eqref{eq:multCond} implies that $\alpha_0^{j_x}\cdot \beta_0^{j_y} = 1/r$.
Given a second nonconstant term $c'x^{j'_x}y^{j'_y}$, we again have $\alpha_0^{j'_x}\cdot \beta_0^{j'_y} = 1/r$.
That is, we get that $\alpha_0^{j_x-j'_x}\cdot \beta_0^{j_y-j'_y} = 1$.
We can thus write $x^{j'_x}y^{j'_y} = x^{j_x}y^{j_y} \cdot x^{m_x}y^{m_y}$, where $m_x,m_y\in \ZZ$ may be negative and $\alpha_0^{m_x}\beta_0^{m_y} =1$.

With the above paragraph in mind, we consider the set
\[ G = \{ (m_x,m_y)\in \ZZ^2 :\ \alpha_0^{m_x}\beta_0^{m_y} =1\}. \]
It can be easily verified that $G$ is a subgroup of $\ZZ^2$.
Such a group must have some generator $(g_x,g_y)\in G$ ?????
This implies that every nonconstant term of $f$ is of the form $cx^{j_x}y^{j_y} (x^{m_x}y^{m_y})^k$ for some $c\in \RR$ and $k\in \ZZ$.
By properly choosing $cx^{j_x}y^{j_y}$, we may assume that $k$ is non-negative.
This contradicts the assumption that $f$ is not multiplicatively degenerate.

By the above, $\Gamma_{j}$ is a set of $|A/A| |B/B|k_j$ curves that are defined by distinct polynomials.
Moreover, no polynomial is a constant multiple of another.
While the polynomials are distinct, the curves of $\Gamma_{j}$ may still have common components.

????????????

\end{proof}

} 

\section{Bipartite distinct distances} \label{sec:Bipartite}

In this Section we prove Theorem \ref{th:BipartiteLineUnrest}.
We begin by recalling the statement of this theorem.
\vspace{1mm}

\noindent {\bf Theorem \ref{th:BipartiteLineUnrest}.}
\emph{Let $\pts_1$ be a set of $m$ points on a line $\ell$ in $\RR^2$ and let $\pts_2$ be set of $n$ points $\RR^2$.
Then}
\begin{equation*}
D(\pts_1,\pts_2) =
\begin{cases}
\Omega(m^{1/2}n^{1/2} \log^{-1/2}n), \hspace{8mm} \text{when }m=\Omega(n^{1/2}/\log^{1/3} n),\\
\Omega\left(n^{3/8}m^{3/4}\right), \hspace{22mm} \text{when }m=O(n^{1/2}/\log^{1/3} n) \text{ and }  m=\Omega(n^{3/10}),\\
\Omega\left(n^{1/2}m^{1/3}\right), \hspace{22mm} \text{when }m=O(n^{3/10}).
\end{cases}
\end{equation*}
\begin{proof}
For any $b\in \pts_2$ and distance $\delta$, at most two points of $\ell$ are at a distance of $\delta$ from $b$.
This implies that $D(\pts_1,\pts_2) \ge D(\pts_1,\{b\}) \ge m/2$ (for an arbitrary $b\in \pts_2$).
When $m= \Omega(n/\log n)$ this implies that $D(\pts_1,\pts_2) = \Omega(m^{1/2}n^{1/2} \log^{-1/2}n)$ and completes the proof.
We may thus assume that $m = O(n/\log n)$.

If at least half of the points of $\pts_2$ are contained in $\ell$, then for an arbitrary $a\in \pts_1$ we have $D(\pts_1,\pts_2) \ge D(\{a\},\pts_2\cap \ell) =\Theta\left(n\right)=\Omega\left(n^{1/2}m^{1/2}\right)$.
We may thus assume that at most half of the points of $\pts_2$ are in $\ell$.
Let $\pts_2' = \pts_2 \setminus \ell$, and note that $|\pts_2'|=\Theta(n)$.

We rotate the plane so that $\ell$ becomes the $x$-axis.
If at least half of the points of $\pts_2'$ have a negative $y$-coordinate then we reflect $\RR^2$ about the $x$-axis.
Let $\pts_2''$ be the set of points of $\pts_2'$ with a positive $y$-coordinate, and note that $|\pts_2''|=\Theta(n)$.
Since $D(\pts_1,\pts_2)\ge D(\pts_1,\pts_2'')$, it suffices to derive a lower bound on $D(\pts_1,\pts_2'')$.
Abusing notation, in the remainder of the proof we will refer to $\pts_2''$ as $\pts_2$ and refer to the size of this set as $n$.

The rest of the proof is based on double counting $E_{3}(\pts_1,\pts_2)$.
By \eqref{eq:dthEnergyLowerBipartite} we have
\begin{equation} \label{eq:BipartiteLineLower}
E_3(\pts_1,\pts_2) = \Omega\left(\frac{m^3n^3}{D(\pts_1,\pts_2)^{2}}\right).
\end{equation}

As before, for every $\delta \in \Delta$ we denote by $m_\delta$ the number of ordered pairs $(a,b)\in \pts^2$ such that $|ab|=\delta$.
Recall that for fixed $\delta \in \Delta$ and $b\in \pts_2$ at most two points $a\in \pts_1$ satisfy $|ab|=\delta$.
This implies that $m_\delta \le 2n$ for every $\delta \in \Delta$.
Let $\Delta_j$ be the set of distances $\delta \in \Delta$ that satisfy $m_\delta \ge j$, and set $k_j = |\Delta_j|$.
A dyadic decomposition argument gives
\begin{align}
E_{3}(\pts_1,\pts_2) = \sum_{j=0}^{\log(2n)} \sum_{\delta\in \Delta \atop 2^{j} \leq m_\delta < 2^{j+1}}{m_\delta^{3}} &< \sum_{j=0}^{\log(2n)} \sum_{\delta\in \Delta \atop 2^{j} \leq m_\delta < 2^{j+1}}(2^{j+1})^3 \le 8 \sum_{j=0}^{\log(2n)} 2^{3j} k_{2^j}. \label{eq:BipartiteThirdEnergyUpper}
\end{align}

\parag{Studying rich distances.}
Fix a positive integer $j$.
With \eqref{eq:BipartiteThirdEnergyUpper} in mind, we now study how large $j^3k_j$ can be.
Consider the set of circles
\[ \curves_j = \left\{\vb((x-a_x)^2+ y^2 - \delta^2) \subset \RR^2 :\ \delta \in \Delta_j, (a_x,0) \in \pts_1 \right\}. \]

Note that $\curves_j$ is a set of $mk_j$ distinct circles.
Since the centers of the circles are on the $x$-axis, two such circles intersect in at most one point with a positive $y$ coordinate.
Thus, the incidence graph of $\pts_2\times \curves_j$ contains no $K_{2,2}$.
By Theorem \ref{th:PachSharir}
\begin{equation} \label{eq:BipartiteIncUpper}
I(\pts_2,\curves_j) = O\left(m^{2/3}n^{2/3} k_j^{2/3} + n + mk_j\right).
\end{equation}

We divide the analysis into cases according to the term that dominates the bound of \eqref{eq:BipartiteIncUpper}.
If $m^{2/3}n^{2/3}k_j^{2/3} = O(mk_{j})$ then $D(\pts_1,\pts_2) \ge k_j =\Omega(n^{2}/m) = \Omega(n^{1/2}m^{1/2})$.
This completes the proof, so we may ignore this case.
If $m^{2/3}n^{2/3}k_j^{2/3} = O(n)$ then $k_j =O\left(n^{1/2}/m\right)$.
This in turn implies
\[ j^3k_j =O\left(j^3 n^{1/2}/m\right). \]

Finally, consider the case where $m^{2/3}n^{2/3} k_j^{2/3}$ dominates the bound of \eqref{eq:BipartiteIncUpper}.
For a distance $\delta \in \Delta_j$, every representation of $\delta$ as a distance between $a\in \pts_1$ and $b\in \pts_2$ corresponds to an incidence between $b$ and the circle defined by $a$ and $\delta$.
Since each of the $k_j$ distances of $\Delta_j$ has at least $j$ such representations, we get that $I(\pts,\curves_j) \ge j k_j$.
Combining this with \eqref{eq:BipartiteIncUpper} gives $j k_j = O(m^{2/3}n^{2/3} k_j^{2/3})$.
Tidying up leads to
\[ j^3k_j = O\left(m^2n^2\right).\]

By combining the two bounds for $j^3k_j$ with \eqref{eq:BipartiteThirdEnergyUpper}, we obtain
\begin{align}
E_3(\pts_1,\pts_2) < 8 \sum_{j=0}^{\log(2n)} 2^{3j} k_{2^j} &= O\left(\sum_{j=0}^{\log(2n)} \left(m^2n^2 + \frac{2^{3j} n^{1/2}}{m}\right)\right) \nonumber \\
&\hspace{45mm}= O\left(m^2n^2\log n + \frac{n^{7/2}}{m}\right). \label{eq:bipartiteE3UpperGeneral}
\end{align}
When $m=\Omega(n^{1/2}/\log^{1/3} n)$, we get that $E_3(\pts_1,\pts_2) = O\left(m^{2}n^{2} \log n\right)$.
Combining this with \eqref{eq:BipartiteLineLower} implies the asserted bound
\[ D(\pts_1,\pts_2) =\Omega\left(n^{1/2}m^{1/2} \log^{-1/2} n\right). \]

\parag{The case of small $m$.}
We now consider the case where $m=O(n^{1/2}/\log^{1/3} n)$.
We present two different arguments that lead to the two remaining bounds in the statement of the theorem.
First, assume that there exists $\delta\in \Delta$ such that $m_\delta \ge n^{1/2} m^{4/3}$.
Let $\circs$ be the set of circles of radius $\delta$ that are centered at points of $\pts_1$, and note that $I(\pts_2,\circs)\ge n^{1/2}m^{4/3}$.
By the pigeonhole principle there exists a circle $\gamma\in \circs$ that is incident to at least $n^{1/2}m^{1/3}$ points of $\pts_2$.
 For an arbitrary $a\in \pts_1$ that is not the center of $\gamma$, we get that $D(\pts_1,\pts_2) \ge D(\{a\}, \pts_2\cap \gamma) \ge n^{1/2}m^{1/3}/2$.

Next, assume that every $\delta\in \Delta$ satisfies $m_\delta < n^{1/2} m^{4/3}$.
Since every pair $\delta \in \Delta_j$ corresponds to at least $j$ distinct ordered pairs of $\pts_1\times \pts_2$, we have the straightforward bound $k_j \le mn/j$.
We use this bound for $j = O(m^{1/2}n^{1/2})$, and for larger values of $j$ we use the bound $k_j = O(n^{1/2}m^{-1}+m^2n^2j^{-3})$ derived in the previous part of this proof.
Repeating the argument in \eqref{eq:BipartiteThirdEnergyUpper} for $E_{2}(\pts_1,\pts_2)$, we get that
\begin{align*}
E_{2}(\pts_1,\pts_2) &< 4 \sum_{j=0}^{\log n^{1/2} m^{4/3}} 2^{2j} k_{2^j} = 4 \sum_{j=0}^{\log \sqrt{mn}} 2^{2j} k_{2^j} + 4 \sum_{j=\log\sqrt{mn}}^{\log n^{1/2} m^{4/3}} 2^{2j} k_{2^j} \\
&= O\left( \sum_{j=0}^{\log \sqrt{mn}} mn2^{j} +  \sum_{j=\log \sqrt{mn}}^{\log n^{1/2} m^{4/3}} \left(2^{2j} n^{1/2}m^{-1} + m^2n^22^{-j}\right)\right) = O\left(n^{3/2}m^{5/3}\right).
\end{align*}

By \eqref{eq:dthEnergyLowerBipartite} we have
\begin{equation} \label{eq:BipartiteLineLowerE2}
E_2(\pts_1,\pts_2) = \Omega\left(\frac{m^2n^2}{D(\pts_1,\pts_2)}\right).
\end{equation}
Combining the two above bounds for $E_2(\pts_1,\pts_2)$ gives $D(\pts_1,\pts_2) =\Omega\left(n^{1/2}m^{1/3}\right)$.
Thus, in either case we have that
\[ D(\pts_1,\pts_2) =\Omega\left(n^{1/2}m^{1/3}\right). \]

For our final bound, assume that $m=\Omega(n^{3/10})$ (and $m=O(n^{1/2}/\log^{1/3}n)$).
If there exists $\delta\in \Delta$ such that $m_\delta \ge n^{9/16} m^{9/8}$,
repeating the above argument involving the set of circles $\circs$ gives a circle $\gamma$ that contains $\Omega(n^{9/16} m^{1/8})$ points of $\pts_2$.
By Theorem \ref{th:PachdeZeeuw}, we have
\begin{align*}
D(\pts_1,\pts_2) \ge D(\pts_1,\pts_2\cap \gamma) &= \Omega\left(\min\left\{|\pts_2 \cap \gamma|^{2/3}m^{2/3},|\pts_2 \cap \gamma|^2, m^2\right\}\right) \\
&= \Omega\left(\min\left\{n^{3/8} m^{3/4},n^{9/8} m^{1/4}, m^2\right\}\right) = \Omega\left(n^{3/8} m^{3/4}\right).
\end{align*}

On the other hand, if every $\delta\in \Delta$ satisfies $m_\delta < n^{9/16} m^{9/8}$ then
\begin{align*}
E_{2}(\pts_1&,\pts_2) < 4\sum_{j=1}^{\log n^{9/16} m^{9/8}} 2^{2j} k_{2^j} < 4 \sum_{j=1}^{\log \sqrt{mn}} 2^{2j} k_{2^j} + 4 \sum_{j=\log\sqrt{mn}}^{\log n^{9/16} m^{9/8}} 2^{2j} k_{2^j} \\
&= O\left( \sum_{j=1}^{\log \sqrt{mn}} mn2^{j} +  \sum_{j=\log\sqrt{mn}}^{\log n^{9/16} m^{9/8}} \left(2^{2j} n^{1/2}m^{-1} + m^2n^22^{-j}\right)\right) = O\left(n^{13/8}m^{5/4}\right).
\end{align*}
Combining this with \eqref{eq:BipartiteLineLowerE2} implies $D(\pts_1,\pts_2) =\Omega\left(n^{3/8}m^{3/4}\right)$.
Thus, in either case we get
\[ D(\pts_1,\pts_2) =\Omega\left(n^{3/8}m^{3/4}\right). \]
\end{proof}

\section{Subsets with few repeating distances} \label{sec:Subset}

In this section we prove Theorem \ref{th:Subset} and another related result.
We begin by recalling the statement of this theorem.
\vspace{1mm}

\noindent {\bf Theorem \ref{th:BipartiteLineUnrest}.}
\emph{Let $\pts \subset \RR^2$ be a set of $n$ points.
Then there exists a subset $\pts' \subset \pts$ of size $\Omega(n^{22/63} \log^{-13/63}n)$ such that no distance is spanned by more than four pairs of points of $\pts'$.
Similarly, there exists a subset $\pts' \subset \pts$ of size $\Omega(n^{12/35} \log^{-9/35} n)$ such that no distance is spanned by more than two pairs of points of $\pts'$.}
\begin{proof}
Let $\Delta_j$ be the set of distances that are spanned by at least $j$ pairs of points of $\pts$.
Set $k_j = |\Delta_j|$.
We begin the proof by studying how large $k_j$ can be.

For a fixed $j$, consider the set of circles
\[ \curves_{j} = \left\{ \vb\left((x-a_{x})^2+(y-a_{y})^2- \delta^2\right)\  :\  \delta \in \Delta_j, \ a \in \pts \right\}. \]
Note that $\curves_{j}$ is a set of $n k_j$ distinct circles.
By Theorem \ref{th:IncSharirZahl} and the trivial bound $k_{j} =O(n^2)$, we have
\begin{align}
I(\pts,\curves_j) &= O\left(n^{6/11}(nk_j)^{9/11}\log^{2/11}(nk_j)+n^{2/3}(nk_j)^{2/3}+n+nk_j\right) \nonumber \\ &\hspace{60mm}=O\left(n^{15/11}k_j^{9/11}\log^{2/11}n\right). \label{eq:SubsetIncUpper}
\end{align}

For every distance $\delta \in \Delta_j$ there are at least $j$ pairs of points in $\pts$ that span $\delta$.
Each such pair corresponds to two incidences in $\pts\times \curves_j$, so
\[ I(\pts,\curves_{j}) \geq 2j k_{j}. \]
Combining this with \eqref{eq:SubsetIncUpper} yields
\begin{equation} \label{eq:SubsetRich}
 k_j = O\left(\frac{n^{15/2}\log n}{j^{11/2}}\right).
\end{equation}

Let $\Delta$ be the set of distances spanned by pairs of points of $\pts$.
A dyadic pigeonholing argument gives
\begin{align*}
\sum_{\delta\in \Delta}m_\delta^{11/2} = \sum_{j=0}^{2\log n} \sum_{\delta\in \Delta \atop 2^j \le m_\delta < 2^{j+1}}m_\delta^{11/2} < \sum_{j=0}^{2\log n} k_{2^j} (2^{j+1})^{11/2} &= O\left(\sum_{j=0}^{2\log n}n^{15/2}\log n\right) \\
&= O\left(n^{15/2}\log^2 n\right).
\end{align*}

Recall from \eqref{eq:GuthKatzE2} that $\sum_{\delta\in \Delta}m_\delta^{2} = O(n^3\log n)$.
By H\"older's inequality
\begin{align} E_5^*(\pts) \le E_5(\pts) = \sum_{\delta\in \Delta}&m_\delta^5 = \sum_{\delta\in \Delta}m_\delta^{33/7}m_\delta^{2/7} \le \left(\sum_{\delta\in \Delta}m_\delta^{11/2}\right)^{6/7}\left(\sum_{\delta\in \Delta} m_\delta^2\right)^{1/7} \nonumber \\
&= O\left(\left(n^{15/2}\log^2 n\right)^{6/7}\left(n^3\log n\right)^{1/7}\right) = O\left(n^{48/7}\log^{13/7} n\right). \label{eq:SubsetE5*Upper}
\end{align}

\parag{A probabilistic argument.}
For $0< p<1$ that will be determined below, let $\pts''$ be a set that is obtained by taking every point of $\pts$ with probability $p$.
Note that\footnote{We denote by $\EE[X]$ the expectation of the random variable $X$, to distinguish it from the energy notation $E(X)$.} $\EE[|\pts''|] = pn$, that $\EE[E_5^*(\pts'')] = p^{10} \cdot E_5^*(\pts)$, and that the expected number of isosceles triangles is $\EE[t(\pts'')] = p^3 \cdot t(\pts)$.
By linearity of expectation, Theorem \ref{th:Isosceles}, and \eqref{eq:SubsetE5*Upper}, we have that
\begin{align*}
\EE\Bigg[|\pts''|- E_5^*(\pts'') - t(\pts'') \Bigg] &= pn - p^{10} \cdot E_5^*(\pts) - p^3 \cdot t(\pts) \\
&\ge pn - cp^{10} n^{48/7}\log^{13/7} n - cp^3 n^{2.137},
\end{align*}
for a sufficiently large constant $c$.

To asymptotically maximize the above expectation, we set $p= \left(2cn^{41/7}\log^{13/7} n\right)^{-1/9}$.
This implies
\begin{align*}
\EE\Bigg[|\pts''|- E_5^*(\pts'') - t(\pts'') \Bigg] \ge \frac{n^{22/63}}{\left(2c\log^{13/7} n\right)^{1/9}} -  & \frac{n^{22/63}}{2\left(2c\log^{13/7} n\right)^{1/9}} - \frac{(c)^{2/3} n^{0.184}}{\left(2\log^{13/7} n\right)^{1/3}} \\
&\hspace{26 mm}= \Omega\left(n^{22/63}\log^{-13/63} n\right).
\end{align*}

By the above, there exists $\pts''\subset \pts$ such that $|\pts''|- E_5^*(\pts'') - t(\pts'') = \Omega(n^{22/63}\log^{-13/63} n)$.
We create $\pts' \subset \pts''$ by taking $\pts''$ and arbitrarily removing one vertex from every isosceles triangle that is spanned by $\pts''$ and from every 10-tuple that contributes to $E_5^*(\pts'')$.
By the above, $|\pts'|= \Omega(n^{22/63}\log^{-13/63} n)$.
Note that no distance is spanned by more than four pairs of points of $\pts'$.
This completes the proof of the first statement of the theorem.

\parag{A subset with no distance repeating more than twice.}
We now prove the second statement of the theorem.
The proof is almost identical to the preceding one, except that $E_5^*(\pts)$ is replaced with $E_3^*(\pts)$.
By H\"older's inequality
\begin{align} E_3^*(\pts) \le E_3(\pts) = \sum_{\delta\in \Delta}&m_\delta^3 = \sum_{\delta\in \Delta}m_\delta^{11/7}m_\delta^{10/7} \le \left(\sum_{\delta\in \Delta}m_\delta^{11/2}\right)^{2/7}\left(\sum_{\delta\in \Delta} m_\delta^2\right)^{5/7} \nonumber \\
&= O\left(\left(n^{15/2}\log^2 n\right)^{2/7}\left(n^3\log n\right)^{5/7}\right) = O\left(n^{30/7}\log^{9/7} n\right). \label{eq:SubsetE3*Upper}
\end{align}

We randomly generate $\pts''$ as above.
By linearity of expectation, Theorem \ref{th:Isosceles}, and \eqref{eq:SubsetE3*Upper}, we have
\begin{align*}
\EE\Bigg[|\pts''|- E_3^*(\pts'') - t(\pts'') \Bigg] &= pn - p^{6} \cdot E_3^*(\pts) - p^3 \cdot t(\pts) \\
&\ge pn - cp^{6} n^{30/7}\log^{9/7} n - cp^3 n^{2.137},
\end{align*}
for a sufficiently large constant $c$.

To asymptotically maximize the above expectation, we set $p= \left(2cn^{23/7}\log^{9/7} n\right)^{-1/5}$.
This implies
\begin{align*}
\EE\Bigg[|\pts''|- E_5^*(\pts'') - t(\pts'') \Bigg] \ge \frac{n^{12/35}}{\left(2c\log^{9/7} n\right)^{1/5}} -  & \frac{n^{12/35}}{2\left(2c\log^{9/7} n\right)^{1/5}} - \frac{(c)^{2/5} n^{0.166}}{\left(2\log^{9/7} n\right)^{3/5}} \\
&\hspace{29 mm}= \Omega\left(n^{12/35}\log^{-9/35} n\right).
\end{align*}

The final part of the argument is identical to the one in the previous case.
\end{proof}

If Conjecture \ref{co:HigherDistEnergy} holds, then the proof of Theorem \ref{th:Subset} would imply significantly stronger results.
For example, if $E_3(\pts) = O(n^{4+\eps})$ then we would get that there exists a subset $\pts' \subset \pts$ of size $\Omega(n^{2/5-\eps})$ such that no distance is spanned more than twice by pairs of points of $\pts'$.

There are many variants of Conjecture \ref{co:subset}, and we can derive similar style results for most of those by using higher distance energies.
We now present one such variant by Raz \cite{Raz16}.

\begin{theorem}\label{th:RazSubset}
Let $\gamma\subset \RR^d$ be an irreducible algebraic curve of degree $k$ and let $\pts$ be a set of $n$ points on $\gamma$.
Then there exists a subset $\pts'\subset \pts$ of size $\Omega(n^{4/9})$ that does not span any distance more than once.
\end{theorem}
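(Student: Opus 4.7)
The plan is to mimic the probabilistic argument from the proof of Theorem \ref{th:Subset}, but with bounds on the distance energy $E_2^*(\pts)$ and on the number $t(\pts)$ of isosceles triangles that exploit the fact that $\pts$ lies on an irreducible algebraic curve $\gamma$ of degree $k$. With the sharper estimates $E_2^*(\pts) = O(n^{8/3})$ and $t(\pts) = O(n^{2})$ in hand, the deletion argument will yield the exponent $4/9$.

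I would first show $t(\pts) = O(n^{2})$. For any $b,c \in \RR^d$, the locus of apices $a$ with $|ab|=|ac|$ is the perpendicular bisector hyperplane $H_{bc}$. Unless $\gamma$ is contained in a hyperplane (the line case, which I would handle separately), B\'ezout gives $|H_{bc}\cap\gamma|\le k$, so at most $k$ points of $\pts$ can serve as apex for a given pair $\{b,c\}$, and summing over the $\binom{n}{2}$ pairs gives $t(\pts) = O(n^{2})$. Next, I would bound $E_2^*(\pts)$ by parametrizing $\gamma$ so that $\pts$ corresponds to an $n$-element set $T \subset \RR$, and setting $F(s,t)=|\gamma(s)-\gamma(t)|^2 \in \RR[s,t]$. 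Then $E_2^*(\pts)$ is at most the number of $4$-tuples $(s_1,t_1,s_2,t_2)\in T^4$ with $F(s_1,t_1)=F(s_2,t_2)$. Unless $\gamma$ is a line or a circle, $F$ is not of the additively-degenerate form $h(g_1(x)+g_2(y))$; the multiplicative form $h(g_1(x)g_2(y))$ is ruled out since $F(s,s)\equiv 0$ together with $F$ non-constant are inconsistent with that form. The energy upper bound extractable from the Raz--Sharir--Solymosi proof behind Theorem \ref{th:ElekesRonyai}, applied to $F$, then gives $E_2^*(\pts) = O(n^{8/3})$. The line and circle cases reduce to finding a Sidon subset in $\RR$ or $\RR/2\pi \ZZ$, which yields $\Omega(n^{1/2}) \gg n^{4/9}$.

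Now apply the probabilistic deletion argument. Include each point of $\pts$ independently with probability $p$ to form $\pts''$. By linearity of expectation,
\[
\EE\!\left[\,|\pts''| - E_2^*(\pts'') - t(\pts'')\,\right] \;\ge\; pn - c\,p^{4}n^{8/3} - c\,p^{3}n^{2}.
\]
Choosing $p$ to be a sufficiently small constant times $n^{-5/9}$ balances the first two terms to $\Omega(n^{4/9})$, while the third term $p^{3}n^{2} = O(n^{1/3})$ is negligible. Fix a realization attaining this bound, and delete one vertex per isosceles triangle and per $E_2^*$ quadruple to obtain $\pts'$. Any remaining distance repetition in $\pts'$ would arise either from an isosceles triangle (two pairs sharing a vertex) or from two disjoint pairs at the same distance (a quadruple), both of which have been destroyed, so $\pts'$ contains no repeating distance and $|\pts'| = \Omega(n^{4/9})$.

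The main obstacle will be the energy bound $E_2^*(\pts) = O(n^{8/3})$. While this is essentially what is bounded as a key intermediate step in the Raz--Sharir--Solymosi proof of Theorem \ref{th:ElekesRonyai} --- a point-surface incidence argument in $\RR^3$ --- extracting it in precisely the form needed, and verifying that the squared-distance polynomial $F$ of every non-line, non-circle irreducible algebraic curve is never additively degenerate, both require careful algebraic bookkeeping. For high-genus curves without a rational parametrization, $F$ is not immediately defined as a polynomial on $\RR^2$; one would instead work with the eliminant of the defining ideal of $\gamma$ and the squared-distance equation, and adapt the incidence argument to that slightly more general setting.
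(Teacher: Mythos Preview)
The paper does not actually prove Theorem~\ref{th:RazSubset}; it is stated as a result of Raz \cite{Raz16} and then used as a black box. What the paper does prove is the variant Theorem~\ref{th:SubsetOnCurve}, and there it invokes Raz's argument only through the dichotomy: either $\pts$ already contains a Sidon subset of size $\Theta(n^{1/2})$, or there is a subset $\tts\subset\pts$ with $|\tts|=\Theta(n)$ and $E_2(\tts)=O(|\tts|^{8/3})$. Your proposal is precisely an attempt to reprove Raz's theorem via the same probabilistic deletion template the paper uses in Section~\ref{sec:Subset}, and in that sense your strategy is exactly right: with $E_2^*=O(n^{8/3})$ and $t=O(n^2)$ the deletion argument with $p\asymp n^{-5/9}$ gives $\Omega(n^{4/9})$, as you compute.

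Two remarks on where your sketch diverges from what the paper records about Raz's proof. First, Raz does not assert $E_2(\pts)=O(n^{8/3})$ for the full set $\pts$, but only after passing to a large subset $\tts$; the exceptional configurations are absorbed into the first branch of the dichotomy (the $\Theta(n^{1/2})$ Sidon subset). Your direct claim that the only exceptional curves are lines and circles is correct in $\RR^2$ but not obviously so in $\RR^d$ for $d\ge 3$ --- for instance, certain helices carry arithmetic-progression--like point sets with many repeated distances --- so the case analysis you propose would need to cover these as well. Second, for the isosceles count, your perpendicular-bisector argument is fine but the parenthetical ``the line case'' is misleading: $\gamma$ can lie in a hyperplane without being a line. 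The cleaner route, and the one the paper takes in the proof of Theorem~\ref{th:SubsetOnCurve}, is to intersect $\gamma$ with the sphere of radius $\delta$ about $p$ and apply Theorem~\ref{th:Milnor}; this avoids any case split. None of this is fatal --- you have correctly located the real work in the $E_2$ bound and its algebraic prerequisites --- but the dichotomy formulation is how the exceptional geometry is actually handled.
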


Combining Theorem \ref{th:RazSubset} with a work of Conlon et al.\ \cite{CFGHUZ15} leads to a family of results involving subsets that do not span simplices with repeating volumes.
As an upper bound for Theorem \ref{th:RazSubset}, when taking a set of $n$ equally spaced points on a line we get that every subset of size $\Omega(\sqrt{n})$ contains a repeating distance (this is the \emph{Sidon set} problem).
By using higher distance energies, we obtain the following variant.

\begin{theorem} \label{th:SubsetOnCurve}
Let $\gamma\subset \RR^d$ be an irreducible algebraic curve of degree $k$ and let $\pts$ be a set of $n$ points on $\gamma$.
Then for every integer $m\ge 2$ there exists a subset $\pts'\subset \pts$ such that $|\pts'|= \Omega\left(n^{\frac{3m-2}{6m-3}}\right)$ and no distance is spanned by more than $m-1$ pairs of points of $\pts'$.
\end{theorem}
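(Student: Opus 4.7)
The plan is to mirror the strategy used to prove Theorem~\ref{th:Subset}. That proof combined three ingredients: an upper bound on a higher distance energy $E_m^*(\pts)$, an upper bound on the number $t(\pts)$ of isosceles triangles spanned by $\pts$, and a random sampling argument that keeps each point with probability $p$ and then deletes one vertex from every ``bad'' configuration. Here the main change is to replace the planar incidence input by bounds that exploit the hypothesis $\pts \subset \gamma$.

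First I would establish the pointwise bound $m_\delta = O(n)$ for every distance $\delta>0$. For any $a \in \pts$, the sphere of radius $\delta$ around $a$ meets $\gamma$ in at most $2k$ points by B\'ezout: because $a \in \gamma$ lies on the sphere while its center $a$ does not, $\gamma$ cannot be contained in the sphere, so the intersection has size at most $2k$. Summing over $a$ gives $m_\delta \le 2kn$. The same input yields $t(\pts) = O(n^2)$: for each $c \in \pts$, every distance is repeated at most $2k$ times in the multiset of distances from $c$ to the other points of $\pts$, so $c$ is the apex of only $O(n)$ isosceles triangles.

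The crucial technical step is the second distance energy bound $E_2(\pts) = O(n^{8/3})$, the natural curve-constrained analog of \eqref{eq:GuthKatzE2}. This is essentially the content of Raz's argument underlying Theorem~\ref{th:RazSubset}: for each $j$ one parameterizes $\gamma$ by a single parameter and applies Theorem~\ref{th:PachSharir} in the parameter plane to the family of ``distance circles'' coming from points of $\pts$, obtaining an upper bound on $k_j = |\Delta_j|$; a dyadic summation over $j$ then gives $\sum_\delta m_\delta^2 = O(n^{8/3})$. H\"older's inequality then yields
\[ E_m^*(\pts) \le E_m(\pts) = \sum_\delta m_\delta^m \le \Bigl(\max_\delta m_\delta\Bigr)^{m-2} \sum_\delta m_\delta^2 = O\bigl(n^{m-2}\cdot n^{8/3}\bigr) = O\bigl(n^{m+2/3}\bigr). \]

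With these inputs I would run the probabilistic argument exactly as in the proof of Theorem~\ref{th:Subset}. Let $\pts''$ be obtained by retaining each point of $\pts$ independently with probability $p$; by linearity
\[ \EE\!\left[\,|\pts''| - E_m^*(\pts'') - t(\pts'')\right] \ge pn - c_1 p^{2m} n^{m+2/3} - c_2 p^3 n^2. \]
Setting $p = \Theta\bigl(n^{(1/3-m)/(2m-1)}\bigr)$ balances $pn$ with $p^{2m} n^{m+2/3}$ at size $\Theta\bigl(n^{(3m-2)/(6m-3)}\bigr)$, while a direct exponent check shows $p^3 n^2 = O\bigl(n^{(3m-3)/(6m-3)}\bigr)$, which is strictly smaller. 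So some $\pts''$ realizes the claimed lower bound, and removing one vertex from each isosceles triangle and from each tuple contributing to $E_m^*(\pts'')$ produces $\pts' \subseteq \pts''$ of the required size. The absence of isosceles triangles forces the distance-$\delta$ graph on $\pts'$ to be a matching for every $\delta$, and the vanishing of $E_m^*(\pts')$ rules out matchings of size $m$, so no distance is spanned by more than $m-1$ pairs of $\pts'$.

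The main obstacle is the energy bound $E_2(\pts) = O(n^{8/3})$ on $\gamma$. Executing this cleanly requires either invoking Raz's proof of Theorem~\ref{th:RazSubset} as a black box or redoing a parameter-space reduction from $\gamma \subset \RR^d$ to an incidence problem in $\RR^2$, including the (easily dispatched) case in which $\gamma$ happens to lie on a sphere. Everything else, including the exponent arithmetic and the bookkeeping needed to convert ``$E_m^* = 0$ plus no isosceles triangles'' into the desired matching bound, is a routine adaptation of the proof of Theorem~\ref{th:Subset}.
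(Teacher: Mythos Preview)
Your approach is essentially the paper's, but there is one real gap. The unconditional bound $E_2(\pts) = O(n^{8/3})$ that you claim is simply false: for $n$ equally spaced points on a line, or for the $n$-th roots of unity on a circle, every distance is spanned by $\Theta(n)$ pairs and $E_2 = \Theta(n^3)$. What Raz's argument (the proof underlying Theorem~\ref{th:RazSubset}) actually gives is a \emph{dichotomy}: either $\pts$ already contains a subset of size $\Theta(n^{1/2})$ with all distances distinct, in which case the theorem follows immediately, or there exists a subset $\tts \subset \pts$ with $|\tts| = \Theta(n)$ and $E_2(\tts) = O(|\tts|^{8/3})$, and the random-sampling step is then run on $\tts$ rather than on $\pts$. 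The paper invokes this dichotomy explicitly; your write-up needs to do the same, as the ``easily dispatched'' exceptional case is not $\gamma$ lying on a sphere but rather $\gamma$ being (a piece of) a line or a circle or a helix.

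Once that is corrected, your argument matches the paper's. The only cosmetic difference is that you pass from $E_2$ to $E_m$ via $E_m \le (\max_\delta m_\delta)^{m-2} E_2$ with $m_\delta = O(n)$, while the paper counts $2m$-tuples directly: it fixes a quadruple contributing to $E_2(\tts)$, then chooses $a_3,\ldots,a_m \in \tts$ freely, and uses the sharper pointwise fact that each $a_j$ has only $O(1)$ partners $b_j$ at the given distance. Both routes give $E_m = O(n^{m+2/3})$, and the probabilistic step and the final clean-up are identical.
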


Theorem \ref{th:SubsetOnCurve} implies that for every $\eps>0$ there exists $\pts'\subset \pts$ such that $|\pts'|= \Omega\left(n^{0.5-\eps}\right)$ and every distance is spanned by $O(1)$ pairs of points of $\pts'$.
\begin{proof}
By the proof of Theorem 4.1 in \cite{Raz16}, either there exists a subset of $\Theta(n^{1/2})$ points of $\pts$ that do not span any distance more than once, or there exists a subset $\tts\subset \pts$ such that $|\tts|=\Theta(n)$ and $E_2(\tts)=O(|\tts|^{8/3})$.
In the former case we are done, so assume that the latter case holds.
Set $n_\tts = |\tts|=\Theta(n)$.

Consider a point $p\in \tts$ and a distance $\delta$.
The points of $\RR^d$ that are at a distance of $\delta$ from $p$ form a hypersphere $S$ centered at $p$.
Note that $\gamma \not\subset S$, since $p\in \gamma\setminus S$.
Since $\gamma$ and $S$ are irreducible varieties with no common components, we get that $\gamma \cap S$ is a finite point set.
Theorem \ref{th:Milnor} implies that $|\gamma \cap S|= O(1)$.
That is, $p$ is at a distance of $\delta$ from $O(1)$ points of $\tts$.
This in turn implies that $t(\tts)=O(n_\tts^2)$.

For an integer $m\ge 2$, we consider the size of $E_m(\tts)$.
Since $E_2(\tts)=O(n_\tts^{8/3})$, there are $O(n_\tts^{8/3})$ quadruples $(a_1,a_2,b_1,b_2)\in \tts$ such that $|a_1b_1|=|a_2b_2|$.
Fix such a quadruple $(a_1,a_2,b_1,b_2)$, together with additional points $a_3,\ldots, a_m\in \tts$.
By the previous paragraph, there are $O(1)$ choices of $b_3,\ldots,b_m\in \tts$ such that $|a_1b_1|=|a_2b_2| = \cdots = |a_mb_m|$.
This implies that
\begin{equation} \label{eq:CurveSubsetEnergym}
E_m^*(\tts) \le E_m(\tts) = O\left(n_\tts^{m+2/3}\right).
\end{equation}

\parag{A probabilistic argument.}
For $0< p<1$ that will be determined below, let $\pts''$ be a set that is obtained by taking every point of $\tts$ with probability $p$.
Note that $\EE[|\tts|] = pn_\tts$, that $\EE[E_m^*(\tts)] = p^{2m} \cdot E_m^*(\tts)$, and that $\EE[t(\pts'')] = p^3 \cdot t(\tts)$.
By linearity of expectation, the aforementioned bound $t(\tts)=O(n_\tts^2)$, and \eqref{eq:CurveSubsetEnergym}, we have that
\begin{align*}
\EE\Bigg[|\pts''|- E_m^*(\pts'') - t(\pts'') \Bigg] &= pn_\tts - p^{2m} \cdot E_m^*(\tts) - p^3 \cdot t(\tts) \\
&\ge pn_\tts - cp^{2m} n_\tts^{m+2/3} - cp^3 n_\tts^2,
\end{align*}
for a sufficiently large constant $c$ (which may depend on $d,k$, and $m$).

To asymptotically maximize the above expectation, we set $p= \left(2cn_\tts^{m-1/3}\right)^{-1/(2m-1)}$.
This implies
\begin{align*}
\EE\Bigg[|\pts''|- E_m^*(\pts'') - t(\pts'') \Bigg] \ge &\frac{n_\tts^{\frac{3m-2}{6m-3}}}{\left(2c\right)^{\frac{1}{2m-1}}} -  \frac{n_\tts^{\frac{3m-2}{6m-3}}}{2\left(2c\right)^{\frac{1}{2m-1}}} - \frac{c^{\frac{2m-4}{2m-1}} n_\tts^{\frac{m-1}{2m-1}}}{2^{\frac{3}{2m-1}}} \\
&\hspace{26 mm}= \Omega\left(n_\tts^{\frac{3m-2}{6m-3}} \right) = \Omega\left(n^{\frac{3m-2}{6m-3}} \right).
\end{align*}

By the above, there exists $\pts''\subset \tts$ such that $|\pts''|- E_m^*(\pts'') - t(\pts'') = \Omega\left(n^{\frac{3m-2}{6m-3}}\right)$.
We create $\pts' \subset \pts''$ by taking $\pts''$ and arbitrarily removing one vertex from every isosceles triangle that is spanned by $\pts''$ and from every $2m$-tuple that contributes to $E_m^*(\pts'')$.
By the above, $|\pts'|= \Omega\left(n^{\frac{3m-2}{6m-3}}\right)$.
No distance is spanned by more than $m-1$ pairs of points of $\pts'$.
\end{proof}



\begin{thebibliography}{99}
%
\bibitem{Ayad02}
M.\ Ayad,
Sur les polyn\^omes $f(X,Y)$ tels que $K[f]$ est int\'egralement ferm\'e dans $K[X,Y]$,
\emph{Acta Arith.} {\bf 105} (2002), 9--28.
%
\bibitem{Behrend46}
F.\ A.\ Behrend,
On sets of integers which contain no three terms in arithmetical progression,
\emph{Proc.\ Natl.\ Acad.\ Sci.\ U.S.A.} {\bf 32} (1946), 331.
%
\bibitem{BS16}
A.\ Bruner and M.\ Sharir,
Distinct distances between a collinear set and an arbitrary set of points,
\emph{Discrete Math.}, to appear.
%
\bibitem{Chara12}
M.\ Charalambides,
A note on distinct distance subsets,
\emph{Journal of Geometry} {\bf 104} (2013), 439--442.
%
\bibitem{CFGHUZ15}
D.\ Conlon, J.\ Fox,  W.\ Gasarch, D.\ G.\ Harris, D. Ulrich, and S.\ Zbarsky,
Distinct volume subsets,
\emph{SIAM J.\ Discrete Math.} {\bf 29} (2015), 472--480.
%
\bibitem{ER00}
G.\ Elekes and L.\ R\'onyai,
A combinatorial problem on polynomials and rational functions,
\emph{J.\ Combin.\ Theory Ser.\  A} {\bf 89} (2000), 1--20.
%
\bibitem{Erdos86}
P.\ Erd\" os,
On some metric and combinatorial geometric problems,
\emph{Discrete Math.} {\bf 60} (1986), 147--153.
%
\bibitem{Erd57}
P.\ Erd\H os, Nehany geometriai probl\'em\'ar\'ol (in Hungarian), \emph{Matematikai Lapok}, {\bf 8} (1957), 86--92.
%
\bibitem{EG70}
P.\ Erd\H os, R.\ K.\ Guy,
Distinct distances between lattice points,
\emph{Elemente Math.} {\bf 25} (1970) 121--123.
%
\bibitem{erd46}
P.\ Erd\H os,
On sets of distances of $n$ points,
\emph{Amer.\ Math.\ Monthly} {\bf 53} (1946), 248--250.
%
\bibitem{EG97}
P.\ Erd\H os and A.\ Gy\'arf\'as,
A variant of the classical Ramsey problem,
\emph{Combinatorica} {\bf 17} (1997), 459--467.
%
\bibitem{Elekes95}
G.\ Elekes,
Circle grids and bipartite graphs of distances,
\emph{Combinatorica} {\bf15} (1995), 167--174.
%
\bibitem{FPS17}
J.\ Fox, J.\ Pach, and A.\ Suk,
More distinct distances under local conditions,
\emph{Combinatorica}, to appear.
%
\bibitem{GK15}	
L.\ Guth and N.H.\ Katz,
On the Erd{\H o}s distinct distances problem in the plane,
{\em Annals Math.} {\bf 181} (2015), 155--190.
%
\bibitem{Jukna11}
S.\ Jukna,
\emph{Extremal combinatorics: with applications in computer science},
Springer Science, 2011.
%
\bibitem{Mil64}
J.\ Milnor,
On the Betti numbers of real varieties,
\emph{Proc.\ Amer.\ Math.\ Soc.} {\bf 15} (1964), 275--280.
%
\bibitem{PT02}
J.\ Pach and G.\ Tardos,
Isosceles triangles determined by a planar point set,
\emph{Graphs Combin.} {\bf 18} (2002), 769--779.
%
\bibitem{PS92}
J.\ Pach and M.\ Sharir,
Repeated angles in the plane and related problems,
\emph{J.\ Combinat.\ Theory Ser.\ A} {\bf 59} (1992), 12--22.
%
\bibitem{PdZ17}
J.\ Pach and F.\ de Zeeuw,
Distinct distances on algebraic curves in the plane,
\emph{Comb.\ Probab.\ Comp} {\bf 26} (2017): 99--117.
%
\bibitem{Raz16}
O.\ E.\ Raz,
A note on distinct distances,
arXiv:1603.00740.
%
\bibitem{RSS16}
O.\ E.\ Raz, M.\ Sharir, and J.\ Solymosi,
Polynomials vanishing on grids: The Elekes-R\'onyai problem revisited,
\emph{Amer.\ J.\ Math.} {\bf 138} (2016), 1029--1065.
%
\bibitem{Reid70}
C.\ Reid,
\emph{Hilbert: with an appreciation of Hilbert's mathematical work by Hermann Weyl},
Springer, 1970.
%
\bibitem{Schoen16}
T.\ Schoen,
New bounds in Balog-Szemerédi-Gowers theorem,
\emph{Combinatorica}, to appear.
%
\bibitem{ScSh13}
T.\ Schoen and I.\ D.\ Shkredov,
Higher moments of convolutions,
\emph{Journal of Number Theory} {\bf 133} (2013), 1693--1737.
%
\bibitem{SchoShk11}
T.\ Schoen and I.\ D.\ Shkredov,
On sumsets of convex sets,
\emph{Comb.\ Probab.\ Comp}, {\bf 20} (2011), 793--798.
%
\bibitem{SSS13}
M.\ Sharir, A.\ Sheffer, and J.\ Solymosi,
Distinct distances on two lines,
\emph{J.\ Combinat.\ Theory A} {\bf 120} (2013), 1732--1736.
%
\bibitem{SZ16}
M.\ Sharir and J.\ Zahl,
Cutting algebraic curves into pseudo-segments and applications,
\emph{J.\ Combinat.\ Theory Ser.\ A}, to appear.
%
\bibitem{Sheffer14}
A. Sheffer,
Distinct Distances: Open Problems and Current Bounds,
arXiv:1406.1949.
%
\bibitem{Shen12}
C.\ Shen,
Algebraic methods in sum-product phenomena,
\emph{Israel J.\ Math.} {\bf 188} (2012), 123--130.
%
\bibitem{SST84}
J.\ Spencer, E.\ Szemer\'edi, and W.\ T.\ Trotter,
Unit distances in the Euclidean plane,
\emph{Graph Theory and Combinatorics} (ed.\ B. Bollob\'as), Academic Press, 1984.
%
\bibitem{Stein89}
Y.\ Stein,
The total reducibility order of a polynomial in two variables,
\emph{Israel J.\ Math.} {\bf 68} (1989), 109--122.
%
\bibitem{TV06}
T.\ Tao and V.\ H.\ Vu,
\emph{Additive combinatorics}, Cambridge University Press, 2006.
%
\end{thebibliography}
\end{document}